\definecolor{Blue}{rgb}{0,0,.4}
\tikzset{>=latex}
\DeclareMathAlphabet{\mathpzc}{OT1}{pzc}{m}{it}
   \font\twelvebm                       = cmmib10 at 12truept
   \font\tenbm                          = cmmib10 at 10truept
   \font\sevenbm                        = cmmib10 at 7truept
\mathchardef \al   ="010B
  \renewcommand{\alpha}{\textrm{\scalebox{.883}{$\al$}}}
\mathchardef \bet  ="010C
  \renewcommand{\beta}{\textrm{\scalebox{.882}{$\bet$}}}
\mathchardef \gam  ="010D
  \renewcommand{\gamma}{\textrm{\scalebox{.883}{$\gam$}}}
\mathchardef \del  ="010E
  \renewcommand{\delta}{\textrm{\scalebox{.882}{$\del$}}}
\mathchardef \eps  ="010F
  \renewcommand{\epsilon}{\textrm{\scalebox{.883}{$\eps$}}}
\mathchardef \zet  ="0110
  \renewcommand{\zeta}{\textrm{\scalebox{.882}{$\zet$}}}
\mathchardef \et   ="0111
  \renewcommand{\eta}{\textrm{\scalebox{.883}{$\et$}}}
\mathchardef \thet ="0112
  \renewcommand{\theta}{\textrm{\scalebox{.93}{$\thet$}}}
\mathchardef \iot  ="0113
  \renewcommand{\iota}{\textrm{\scalebox{.883}{$\iot$}}}
\mathchardef \kapp ="0114
  \renewcommand{\kappa}{\textrm{\scalebox{.883}{$\kapp$}}}
\mathchardef \lambd="0115
  \renewcommand{\lambda}{\textrm{\scalebox{.883}{$\lambd$}}}
\mathchardef \m    ="0116
  \renewcommand{\mu}{\textrm{\scalebox{.882}{$\m$}}}
\mathchardef \n    ="0117
  \renewcommand{\nu}{\textrm{\scalebox{.883}{$\n$}}}
\mathchardef \ksi  ="0118
  \renewcommand{\xi}{\textrm{\scalebox{.882}{$\ksi$}}}
\mathchardef \pii  ="0119
  \renewcommand{\pi}{\textrm{\scalebox{.883}{$\pii$}}}
\mathchardef \ro   ="011A
  \renewcommand{\rho}{\textrm{\scalebox{.883}{$\ro$}}}
\mathchardef \sig  ="011B
  \renewcommand{\sigma}{\textrm{\scalebox{.883}{$\sig$}}}
\mathchardef \ta   ="011C
  \renewcommand{\tau}{\textrm{\scalebox{.883}{$\ta$}}}
\mathchardef \upsil="011D
  \renewcommand{\upsilon}{\textrm{\scalebox{.883}{$\upsil$}}}
\mathchardef \fii  ="011E
  \renewcommand{\phi}{\textrm{\scalebox{.882}{$\fii$}}}
\mathchardef \xii  ="011F
  \renewcommand{\chi}{\textrm{\scalebox{.883}{$\xii$}}}
\mathchardef \psii ="0120
  \renewcommand{\psi}{\textrm{\scalebox{.883}{$\psii$}}}
\mathchardef \omeg ="0121
  \renewcommand{\omega}{\textrm{\scalebox{.883}{$\omeg$}}}
\mathchardef \vthet="0123
  \renewcommand{\vartheta}{\textrm{\scalebox{.93}{$\vthet$}}}
\mathchardef \vpi  ="0125
  \renewcommand{\varpi}{\textrm{\scalebox{.883}{$\vpi$}}}
\mathchardef \vsig  ="0126
  \renewcommand{\varsigma}{\textrm{\scalebox{.883}{$\vsig$}}}
  \mathchardef \vphi  ="0127
  \renewcommand{\varphi}{\textrm{\scalebox{.882}{$\vphi$}}}
\newcommand{\R}{\mathds{R}}
\newcommand{\E}{\operatorname{\mathds{E}}}
\newcommand{\Var}{\operatorname{\mathsf{Var}}}
\newcommand{\Cov}{\mathsf{Cov}}
\newcommand{\CF}{\mathpzc{F}}
\newcommand{\CC}{\mathpzc{C}}
\renewcommand{\Pr}{\operatorname{\mathds{P}}}
\newcommand{\ds}{\displaystyle}
\newcommand{\ud}[1]{\operatorname{d\textrm{${#1}$}}}
\newcommand{\prob}{\xlongrightarrow{\mathrm{p}}}
\newcommand{\distr}{\xlongrightarrow{\mathrm{d}}}
\newcommand{\law}{\xlongequal{\mathrm{d}}}
\newcommand{\bmu}{\pmb{\mu}}
\newcommand{\bx}{\bm{x}}
\newcommand{\bX}{\bm{X}}
\newcommand{\bM}{\bm{M}}
\newcommand{\bU}{\bm{U}}
\newcommand{\bbm}{\bm{m}}
\newcommand{\bg}{\bm{g}}
\newcommand{\bW}{\bm{W}}
\newcommand{\bZ}{\bm{Z}}
\newcommand{\bmk}{\bbm_{k,n}}
\newcommand{\bMk}{\bM_{k,n}}
\newcommand{\bmuk}{\bmu_{k}}
\newcommand{\bWk}{\bW_{k}}
\newcommand{\bMkB}{\bM_{k,n}^*}
\newcommand{\bmukB}{\bmu_{k}^*}
\DeclareMathSymbol{\upSigma}{\mathalpha}{operators}{6}
\renewcommand{\bm}[1]{\textrm{$\pmb{#1}$}}
\newcommand{\SIG}{\mathrel{\mathpalette\SIS\relax}}
  \newcommand{\SIS}[2]{%
    \ooalign{$#1\pmb{\upSigma}$\cr\hfil$#1|$\hfil\cr}}
\newcommand{\Sig}{\operatorname{\SIG}}
\newcommand{\Sigk}{\Sig_k}
\newcommand{\BV}{\mathbf{V}}
\newcommand{\BH}{\mathbf{H}}
\newcommand{\BVk}{\BV_k}
 \newenvironment{pr}[1]{{\noindent\it{#1~}}}{\smallskip}
\def\@seccntformat#1{\@ifundefined{#1@cntformat}%
   {\csname the#1\endcsname\quad}  
   {\csname #1@cntformat\endcsname}
}
\let\oldappendix\appendix 
\renewcommand\appendix{%
    \oldappendix
    \newcommand{\section@cntformat}{\appendixname~\thesection\quad}
}
\let\originalleft\left
\let\originalright\right
\renewcommand{\left}{\mathopen{}\mathclose\bgroup\originalleft}
\renewcommand{\right}{\aftergroup\egroup\originalright}
\crefname{section}{section}{sections}
\crefname{theorem}{theorem}{theorems}
\crefname{corollary}{corollary}{corollaries}
\crefname{lemma}{lemma}{lemmas}
\crefname{proposition}{proposition}{propositions}
\crefname{definition}{definition}{definitions}
\crefname{remark}{remark}{remarks}
\crefname{figure}{figure}{figures}
\crefname{equation}{equation}{equations}
\begin{document}

\title{On the limiting distribution of sample central moments}


\author{Georgios Afendras \and Nickos Papadatos \and Violetta Piperigou}


\institute{Georgios Afendras
           \at
              Department of Mathematics, Aristotle University of Thessaloniki, 54124, Thessaloniki, Greece\\
              \email{gafendra@math.auth.gr}
           \and
           Nickos Papadatos
           \at
              Department of Mathematics, National and Kapodistrian University of Athens, Pane\-pi\-ste\-mi\-o\-po\-lis, 15784 Athens, Greece.\\
              \email{npapadat@math.uoa.gr}
           \and
           Violetta Piperigou
           \at
              University of Patras, 26500 Rion (Patras), Greece.\\
              \email{vpiperig@math.upatras.gr}
           }

\date{}

\maketitle

\begin{abstract}
 We investigate the limiting behavior of sample central moments, examining
 the special cases where the limiting (as the sample size tends to infinity)
 distribution is degenerate. Parent
 (non-degenerate) distributions with this property are called {\it singular},
 and we show in this article that the singular distributions contain at most
 three supporting points. Moreover, using the \emph{delta}-method,
 we show that the  (second order) limiting distribution of sample
 central moments from a singular distribution is either
 a multiple, or a difference of two multiples of independent chi-square
 random variables with one degree of freedom.
 Finally, we present a new characterization of normality through the
 asymptotic independence of the sample mean and all sample central moments.
\keywords{
sample central moments
\and
singular distributions
\and
second order approximation
\and
characterization of normality
\and
\emph{delta}-method}
\end{abstract}

\section{Introduction}
\label{sec.intr}

 Let $X$ be a random variable with distribution function $F$
 and finite moment of order $k$, for some positive integer $k\ge2$.
 Then, $X$ has finite central moment of order $k$.

 Based on a random sample of size $n$ from $F$,
 a natural estimator of the $k$th central moment of $X$ is
 the $k$th sample central moment, and
 the strong law of large numbers implies that
 the $k$th sample central moment is a strongly consistent estimator
 of the population $k$th central moment.
 If, in addition, $X$ has finite moment of order $2k$,
 its asymptotic normality is also known
 \citep[see, for example,][pp.~297--8]{Lehmann1999}.

 In the particular case where $k=2$ and the sample size $n\geq 2$
 is fixed, \citet{Kourouklis2012} proved that the usual
 unbiased estimator for the variance, $S^2$,  is inadmissible
 in the class $\CC=\{cS^2 \colon c>0\}$, showing that the estimator
 $n(n-1)[n(n-1)+2]^{-1}S^2$ has smaller mean squared error than $S^2$
 for all $F$ with finite fourth moment; see also \citet{Yatracos2005}.

 On the other hand, various authors provide statistical inference based on the
 asymptotic (as $n\to\infty$) distribution of a function of the sample
 central moments.
 Such results have several applications, including
 the evaluation of the limiting distribution of process capability indices,
 which have been widely used to measure
 the improvement of quality and productivity
 \citep[see, e.g.,][]{WuLiang2010}.
 In a different context,
 \citet{Pewsey2005} and \citet{Afendras2013} provide hypothesis
 testing,
 including normality-testing, based on a function
 of the first four central moments of a distribution.

 \citet{HKLZ2007} suggest moment estimators for the parameters
 of a continuous time $\mathrm{GARCH}(1,1)$.
 The asymptotic normality of these estimators plays
 an important role in their analysis.

 Investigating the M-estimation procedure,
 \citet{SB2002} present cases in which central moment-based
 estimates may be presented as M-estimators.
 The asymptotic analysis and approximate inference
 are an important issue for large-sample inference.

 The sequence of the random vectors that contain the first $k$ central moments
 $\surd{n}$-converges in distribution to a $k$-dimensional normal distribution;
 this result arises easily from the multivariate central limit theorem
 and the \emph{delta}-method.
 However, there are cases where the asymptotic distribution of the
 $\surd{n}$-convergence of a central moment is a constant with probability one.
 In those cases, the order of convergence is faster than $\surd{n}$,
 specifically, the convergence is of order $n$.
 Therefore, a deeper study of the asymptotic behavior of
 the sample central moments is required.

 This paper is organized as follows.
 \Cref{sec.notation} provides the basic notation and terminology
 that will be used through the paper.
 \Cref{sec:motivation} presents a motivation
 of the problems that are studied and lists our contributions.
 \Cref{sec.lim_distr} provides the asymptotic
 distribution of the $\surd{n}$-convergence of the sample central moments,
 and discusses asymptotic properties of these moments.
 Specifically, we introduce the property
 of
 asymptotic independence,
 and investigate this property for the random vector
 of the first $k$ central moments;
 an asymptotic independence-based characterization for the normal
 distribution is also given.
 In \Cref{sec.singular_distr}
 we introduce the notion of a singular distribution
 and we study the class of such distributions,
 while \Cref{sec.Lim_distr_under_sing} contains
 results associated with the asymptotic distribution
 of sample central moments under singularity.
 Proofs of the results are presented
 in the Appendix.

\section{Notation and Terminology}
\label{sec.notation}

 Let $X\sim F$ with $\E|X|^{k}<\infty$ for some (fixed) $k\in\{1,2,\ldots\}$;
 and let us consider a random sample $X_1,\ldots,X_n$ from $F$.
 To avoid trivialities we further assume that $X$ is non-degenerate,
 that is, the set of points of increase of $F$,
 \[
 S_F\doteq\{x\in\R \colon F(x+\epsilon)-F(x-\epsilon)>0 \textrm{ for all }
 \epsilon>0\},
 \]
 contains at least two elements.
 The first $k$ central moments of $X$ around its mean, $\mu\doteq\E(X)$,
 are well-defined and finite. In the sequel, we shall use the notation
 \[
 \mu_j\doteq\E(X-\mu)^j, \quad j=0,\ldots,k.
 \]

 The sample moments of the centered $X_i$s are
 \[
 m_{j,n}\doteq\frac{1}{n}\sum_{i=1}^n (X_i-\mu)^j,
 \quad j=1,\ldots,k.
 \]

 The moment estimator of $\mu_k$ (for $k\geq 2$)
 when $\mu$ is unknown (as is usually the case)
 is its sample counterpart,
 \[
 M_{k,n}\doteq\frac{1}{n}\sum_{i=1}^n \left(X_i-\bar{X}_n\right)^k,
 \quad\textrm{where}\quad
 \bar{X}_n\doteq\frac{1}{n}\sum_{i=1}^n X_i;
 \]
 for convenience, we set $M_{1,n}\doteq\bar{X}_n-\mu$.

 Now, we define the vectors
 \[
 \bmuk\doteq(\mu_1,\ldots,\mu_k)'=\left(0,\sigma^2,\mu_3,\ldots,\mu_k\right)'
 \quad\textrm{and}\quad
 \bmukB\doteq\left(\sigma^2,\mu_3,\ldots,\mu_k\right)',
 \]
 as well as the random vectors
 \[
 \bMk\doteq(\bar{X}_n-\mu,M_{2,n},\ldots,M_{k,n})',
 \quad
 \bMkB\doteq(M_{2,n},\ldots,M_{k,n})',
 \quad
 \bmk\doteq(m_{1,n},\ldots,m_{k,n})';
 \]
 it is worth noting that it is convenient to find the asymptotic
 distribution
 of $\surd{n}(\bMk-\bmuk)$ instead of $\surd{n}(\bMkB-\bmukB)$.

 Observe that,
 by Newton's formula, $M_{j,n}=g_{j,k}(\bmk)$, where for $\bx_k=(x_1,\ldots,x_k)'$
 \[
 g_{j,k}(\bx_k)=(-1)^{j-1}(j-1)x_1^j +\sum_{i=2}^{j-1} (-1)^{j-i} {j\choose i}x_ix_1^{j-i}+x_j,
 \quad j=1,\ldots,k,
 \]
 where an empty sum should be treated as zero. Therefore,
 $\bMk=\bg_k(\bmk)$, where $\bg_k=(g_{1,k},\ldots,g_{k,k})'$.

 Finally, let $\bX_n$ be a sequence of random vectors.
 The terminology $\bX_n$ $\surd{n}$-converges in distribution
 to a distribution, say $F_{0}$, means that there exists
 $\bmu$ such that $\surd{n}(\bX_n-\bmu)\distr F_{0}$ as $n\to\infty$;
 similarly, we define $n$-convergence.
 In the rest of the paper, all limiting behaviors
 (limits, convergence in distribution or in probability as well as $o(\cdot)$, $O(\cdot)$ and $o_p(\cdot)$ functions)
 will be with respect to $n\to\infty$, except if something else is explicitly denoted.

\section{Motivation and our contributions}
\label{sec:motivation}

 Based on the asymptotic distribution
 of the vector of the sample skewness and kurtosis,
 \citet{Pewsey2005} gave an asymptotic
 result for testing normality.
 \citet{Afendras2013} establishes moment-based estimators
 of the parameter vector of the characteristic quadratic polynomial
 for both, integrated Pearson and cumulative Ord families of distributions,
 and obtained the asymptotic distribution of those estimators.
 Using this asymptotic distribution, he provides a number
 of hypothesis testing, including a normality test.
 In both cases, i.e., sample skewness and kurtosis \citep{Pewsey2005}
 and parameter vector of the characteristic quadratic polynomial \citep{Afendras2013},
 the estimator is a function of $\bM_{4,n}$.
 Thus,
 it is of some interest to obtain the asymptotic distribution of
 $\surd{n}(\bMk-\bmuk)$
 for any value of $k$.

 Our contributions are as follows.
 \begin{enumerate}[topsep=0ex, itemsep=.5ex, wide, labelwidth=!, labelindent=0pt, label=\rm\arabic*., ref=\textcolor{black}{\arabic*}]
 \item
 \label{contibutions1}
 We give some more light on the limiting behavior of the vector
 $\bMk$.
 In particular, we present results related
 to the rate of convergence of the first and second moments
 of $\bMk$.
 Furthermore, we investigate in some detail the singular cases, i.e.,
 the cases where $v_k^2=0$ (see \eqref{elements.V} below),
 characterizing the distributions with this property.

 \item
 We introduce the notion of {\it asymptotic independence}
 between the components of a sequence of $k$-dimensional random vectors,
 and we investigate the asymptotic properties of $\bMk$ in view of this notion.
 Specifically, we show that, among the distributions having finite moments of
 any order, the asymptotic independence of $\bar{X}_n$
 and the sequence $\{M_{k,n}, k\geq 2\}$ characterizes the
 normal distribution. This fact provides, in a sense, a
 limiting counterpart of the well-known result that independence of
 $\bar{X}_n$ and $M_{2,n}=(1-1/n)S^2_n$ (for some fixed $n\geq
 2$) characterizes normality
 \citep[see][]{Geary1936,Zinger1958,LLN1960,KLR1973}.
 Here, the assumption of independence is
 weakened to {\it asymptotic independence}
 but, of course, the requirement of the existence of all moments and
 the fact that $\bar{X}_n$ has to be asymptotically independent
 {\it of all} $M_{k,n}, k\geq 2$ (and not only $k=2$),
 seems to be quite restricted.
 However, this result is best possible.
 Indeed, as we shall show,
 for any fixed $k\geq 2$ there are (infinitely many)
 non-normal distributions for which $\bar{X}_n$
 and $\bMkB$ are asymptotically independent.

 \item
 Let $k=2,3,\ldots$ be fixed such that $\E|X|^{2k}<\infty$. Under non-singularity
 of order $k$, that is $v_k^2\ne0$, the $\surd{n}$-convergence
 of $M_{k,n}$ is a well-known result, i.e.,
 $\surd{n}(M_{k,n}-\mu_k)$ converges in distribution to $N(0,v_k^2)$.
 Under singularity of order $k$
 we shall verify the $n$-convergence of $M_{k,n}$, i.e.,
 $n(M_{k,n}-\mu_k)$ converges in distribution to a non-normal distribution.
 \end{enumerate}

\section{The limiting distribution and a characterization of normality}
\label{sec.lim_distr}

 Assume that $k\geq 2$ and $\E|X|^{2k}<\infty$.
 The multivariate central limit theorem immediately yields
 that
 \begin{equation}
 \label{clt}
 \surd{n}(\bmk-\bmuk)\distr
 N_k\left(\bm{0}_k,\Sigk\right),
 \end{equation}
 where $\bm{0}_k=(0,\ldots,0)'\in\R^k$
 and $\Sigk=(\sigma_{ij})\in\R^{k\times k}$ with
 $\sigma_{ij}=\mu_{i+j}-\mu_i\mu_j$.
 Since $\bMk=\bg_k(\bmk)$, the asymptotic distribution
 of $\surd{n}(\bMk-\bmuk)$ easily arises by a simple application of
 \emph{delta}-method and is a $k$-dimensional normal
 distribution \citep[see, e.g.,][Theorem 3.1]{vdVaart1998}.
 This result is presented in the following proposition.

 \begin{proposition}
 \label{prop.lim}
 If $\E|X|^{2k}<\infty$, then
 \begin{equation}
 \label{limit.gen}
 \surd{n}(\bMk-\bmuk)\distr N_k(\bm{0}_k,\BVk),
 \end{equation}
 where the variance-covariance matrix $\BVk=(v_{ij})\in\R^{k\times k}$
 has elements
 \begin{subequations}
 \label{elements.V}
 \begin{align}
 \label{elements.V1}
   v_{11} & = \sigma^2, &&  \\
   \label{elements.V2}
   v_{1j} & = v_{j1} = \mu_{j+1}-j\sigma^2\mu_{j-1}, && j=2,\ldots,k,\\
   \label{elements.V3}
   v_{ij} & = \mu_{i+j}-\mu_i\mu_j-i\mu_{i-1}\mu_{j+1}
 -j\mu_{i+1}\mu_{j-1}+ij\sigma^2\mu_{i-1}\mu_{j-1},&& i,j=2,\ldots,k;
 \end{align}
 \end{subequations}
 the elements $v_{ii}$ are also denoted by $v_i^2$, $i=1,\ldots,k$.
 \end{proposition}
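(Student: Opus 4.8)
The plan is to obtain \eqref{limit.gen} by applying the multivariate \emph{delta}-method to \eqref{clt}, using the representation $\bMk=\bg_k(\bmk)$ already recorded in \Cref{sec.notation}. Since $\surd{n}(\bmk-\bmuk)\distr N_k(\bm{0}_k,\Sigk)$ and $\bg_k$ is a polynomial map (hence $\mathcal{C}^1$), the \emph{delta}-method of \citet[Theorem 3.1]{vdVaart1998} gives
\begin{equation*}
\surd{n}(\bg_k(\bmk)-\bg_k(\bmuk))\distr N_k\bigl(\bm{0}_k,\, \BH\,\Sigk\,\BH'\bigr),
\end{equation*}
where $\BH=(\partial g_{i,k}/\partial x_j)$ is the Jacobian matrix of $\bg_k$ evaluated at $\bmuk=(0,\sigma^2,\mu_3,\ldots,\mu_k)'$. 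First I would check that $\bg_k(\bmuk)=\bmuk$, which is immediate because evaluating $g_{j,k}$ at $x_1=\mu_1=0$ kills every term containing a positive power of $x_1$, leaving $g_{j,k}(\bmuk)=x_j=\mu_j$. Thus the asserted centering is correct and it remains only to identify $\BVk=\BH\,\Sigk\,\BH'$ with the explicit entries in \eqref{elements.V}.

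The bulk of the work is the computation of $\BH$ at $\bmuk$. Differentiating $g_{j,k}(\bx_k)=(-1)^{j-1}(j-1)x_1^j+\sum_{i=2}^{j-1}(-1)^{j-i}\binom{j}{i}x_i x_1^{j-i}+x_j$ and evaluating at $x_1=0$, I expect almost all partials to vanish: terms with two or more factors of $x_1$ disappear, so $\partial g_{j,k}/\partial x_j=1$, $\partial g_{j,k}/\partial x_1=-j\,x_{j-1}|_{x_1=0}=-j\mu_{j-1}$ (coming only from the $i=j-1$ term of the sum, plus the $j=2$ boundary case which gives $-2\mu_1=0$ consistently), and $\partial g_{j,k}/\partial x_i=0$ for $2\le i\le j-1$ and for $i>j$. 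Hence $\BH$ is lower triangular with unit diagonal and first column $(1,-2\mu_1,\ldots,-k\mu_{k-1})'=(1,0,-3\mu_2,\ldots,-k\mu_{k-1})'$; more compactly, $H_{jj}=1$, $H_{j1}=-j\mu_{j-1}$ for $j\ge2$, and $H_{ji}=0$ otherwise. This sparse structure is what makes the quadratic form tractable.

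With $\BH$ in hand I would expand $v_{ij}=\sum_{a,b}H_{ia}\sigma_{ab}H_{jb}$ using $\sigma_{ab}=\mu_{a+b}-\mu_a\mu_b$. For $i=j=1$ only the $(1,1)$ term survives, giving $v_{11}=\sigma_{11}=\mu_2-\mu_1^2=\sigma^2$, matching \eqref{elements.V1}. For $v_{1j}$ with $j\ge2$, the row $i=1$ of $\BH$ has a single unit entry in position $1$, so $v_{1j}=\sigma_{1j}+H_{j1}\sigma_{11}=(\mu_{j+1}-\mu_1\mu_j)-j\mu_{j-1}\sigma^2=\mu_{j+1}-j\sigma^2\mu_{j-1}$, using $\mu_1=0$; this is \eqref{elements.V2}. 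For the general $i,j\ge2$ case each row of $\BH$ contributes exactly two nonzero entries (positions $1$ and the diagonal), so the double sum collapses to four terms,
\begin{equation*}
v_{ij}=\sigma_{ij}+H_{i1}\sigma_{1j}+H_{j1}\sigma_{i1}+H_{i1}H_{j1}\sigma_{11},
\end{equation*}
and substituting $H_{i1}=-i\mu_{i-1}$, $H_{j1}=-j\mu_{j-1}$ together with the $\sigma_{ab}$ formulas and $\mu_1=0$ yields exactly \eqref{elements.V3} after the $\mu_a\mu_b$ cross-terms with an index equal to $1$ drop out. I expect the main obstacle to be purely bookkeeping: correctly pinning down which single term of the sum in $g_{j,k}$ produces the off-diagonal partial derivative and verifying the boundary cases $j=2$ and the empty-sum convention, so that the Jacobian entries are unambiguous before the four-term contraction is performed.
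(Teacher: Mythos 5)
Your proof is correct and follows essentially the same route as the paper: applying the \emph{delta}-method \citep[Theorem 3.1]{vdVaart1998} to the multivariate CLT \eqref{clt} via the representation $\bMk=\bg_k(\bmk)$, with the covariance matrix obtained as $\BH\,\Sigk\,\BH'$. The paper itself omits the computation, deferring to \citet{Afendras2013} for $k=4$ and calling the general case similar; your explicit identification of the sparse Jacobian ($H_{jj}=1$, $H_{j1}=-j\mu_{j-1}$, all other entries zero at $\bmuk$) and the resulting four-term contraction supply exactly those omitted details, and they check out, including the $j=2$ boundary case and the verification that $\bg_k(\bmuk)=\bmuk$.
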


 The proof of \Cref{prop.lim} for the case $k=4$
 is contained in \citet[in the proof of Theorem 3.1]{Afendras2013},
 while the proof for general $k$ is similar to the case $k=4$.
 Particular cases of the preceding result are contained in the next corollary.

 \begin{corollary}
 \label{cor.lim}
 If $k\geq 2$ and $\E|X|^{2k}<\infty$, then
 \begin{equation}
 \label{limit2}
 \surd{n}(M_{k,n}-\mu_k)\distr N\left(0,v_k^2\right);
 \end{equation}
 \begin{equation}
 \label{limit4}
 \surd{n}\left({\bar{X}_n-\mu\atop M_{k,n}-\mu_k}\right)\distr
 N_2\left(\left({0\atop0}\right),
 \left({\sigma^2 \atop \mu_{k+1}-k\sigma^2\mu_{k-1}}~~{\mu_{k+1}-k\sigma^2\mu_{k-1} \atop v_k^2}\right)\right).
 \end{equation}
 \end{corollary}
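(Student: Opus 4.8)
The plan is to obtain both limits in \Cref{cor.lim} as marginals of the joint convergence already established in \Cref{prop.lim}. The underlying observation is that selecting a subset of coordinates of a random vector is a fixed linear map, so the continuous mapping theorem turns \eqref{limit.gen} into the convergence of any linear image, with the transformed covariance matrix computed in the usual way. Thus the entire corollary reduces to reading off the appropriate entries of $\BVk$ from \eqref{elements.V}.

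First, for \eqref{limit2}, I would note that $\surd{n}(M_{k,n}-\mu_k)$ is precisely the $k$th coordinate of the vector $\surd{n}(\bMk-\bmuk)$. Writing $\bm{e}_k$ for the $k$th standard coordinate vector in $\R^k$, so that $\bm{e}_k'$ is a continuous linear functional, \eqref{limit.gen} together with the continuous mapping theorem yields $\surd{n}(M_{k,n}-\mu_k)\distr N(0,\bm{e}_k'\BVk\bm{e}_k)$. Since $\bm{e}_k'\BVk\bm{e}_k=v_{kk}=v_k^2$ by \eqref{elements.V}, this is exactly the assertion. Next, for \eqref{limit4}, I would apply the same idea with the fixed $2\times k$ selection matrix $A$ whose rows are $\bm{e}_1'$ and $\bm{e}_k'$, so that $\surd{n}A(\bMk-\bmuk)=\surd{n}(\bar{X}_n-\mu,\,M_{k,n}-\mu_k)'$. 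The continuous mapping theorem then gives convergence to $N_2(\bm{0}_2,A\BVk A')$, where $A\BVk A'$ is the $2\times2$ principal submatrix of $\BVk$ indexed by rows and columns $1$ and $k$. Its entries are $v_{11}=\sigma^2$ by \eqref{elements.V1}, the off-diagonal $v_{1k}=v_{k1}=\mu_{k+1}-k\sigma^2\mu_{k-1}$ by \eqref{elements.V2} with $j=k$, and $v_{kk}=v_k^2$ by \eqref{elements.V3}, which reproduces the displayed covariance matrix verbatim.

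There is essentially no genuine obstacle here, since the corollary is a direct specialization of \Cref{prop.lim}: the only computation is matching the selected entries of $\BVk$ against the two displayed matrices, and that follows at once from \eqref{elements.V}. The sole point deserving a word of care is invoking the correct form of the linear-transformation (continuous mapping) theorem for weak convergence of random vectors—namely that if $\bm{Y}_n\distr N_k(\bm{0}_k,\BVk)$ then $A\bm{Y}_n\distr N_m(\bm{0}_m,A\BVk A')$ for any fixed $m\times k$ matrix $A$—but this is entirely standard, so the proof is short.
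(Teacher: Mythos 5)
Your proposal is correct and takes essentially the same route as the paper: the paper offers no separate proof of \Cref{cor.lim}, presenting it as an immediate specialization of \Cref{prop.lim} (``particular cases of the preceding result''), which is precisely the coordinate-projection argument you spell out. Your identification of the relevant entries $v_{11}=\sigma^2$, $v_{1k}=v_{k1}=\mu_{k+1}-k\sigma^2\mu_{k-1}$ and $v_{kk}=v_k^2$ from \eqref{elements.V} is exactly the matching the paper intends.
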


 Note that, as it is well-known,
 the weak convergence in \eqref{limit.gen}
 does not imply convergence of
 the corresponding moments; e.g., it is not
 necessarily true that either $\E[\surd{n}(M_{k,n}-\mu_k)]\to 0$
 or $\Var[ \surd{n}(M_{k,n}-\mu_k)]\to v_k^2$.
 Therefore, it is an interesting fact that
 \eqref{limit.gen} correctly suggests the limits for the corresponding
 expectations, variances and covariances.
 The following proposition
 asserts that this moment convergence is indeed satisfied
 when the minimal (natural) set of assumptions is imposed on the
 moments of $X$; detailed proofs
 are given in \Cref{appendix}.
 \begin{proposition}
 \label{prop.exp.cov.conv}
 Let $k,r\in\{2,3,\ldots\}$ be fixed.
 \begin{enumerate}[leftmargin=17pt, topsep=0ex, itemsep=.5ex, labelindent=0pt, label=\rm(\alph*), ref=\textcolor{black}{\alph*}]
 \item
 \label{prop.exp.cov.conv(a)}
 If $\E |X|^k<\infty$, then $\E(M_{k,n})=\mu_k+o(1/\surd{n})$;
 \item
 \label{prop.exp.cov.conv(b)}
 If $\E |X|^{k+1}<\infty$, then
 $\Cov(\bar{X}_n, M_{k,n})=(\mu_{k+1}-k\sigma^2\mu_{k-1})/n+o(1/n)$;
 \item
 \label{prop.exp.cov.conv(c)}
 If $\E |X|^{k+r}<\infty$,
 then
 $\Cov(M_{r,n}, M_{k,n})=v_{rk}/n+o(1/n)$,
 and in particular, if $\E|X|^{2k}<\infty$, then
 $\Var[\surd{n}(M_{k,n}-\mu_k)]\to v_{k}^2$.
 \end{enumerate}
 \end{proposition}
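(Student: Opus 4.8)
The plan is to pass to the centred variables $Y_i\doteq X_i-\mu$, so that $\bar X_n-\mu=m_{1,n}=\frac1n\sum_i Y_i$ has mean zero and $\E Y_i^{\,a}=\mu_a$. Expanding $(Y_i-m_{1,n})^k$ by the binomial theorem and averaging gives the exact identity
\[
 M_{k,n}=\sum_{j=0}^{k}\binom{k}{j}(-1)^j\,m_{1,n}^{\,j}\,m_{k-j,n},
 \qquad
 m_{a,n}\doteq\frac1n\sum_{i=1}^n Y_i^{\,a},
\]
so every quantity in the proposition is an expectation of a product of sample power sums $m_{a,n}$. I would evaluate all such expectations by the standard index--partition bookkeeping: write each factor as $\frac1n\sum_i Y_i^{\,a}$, expand the product into a multiple sum over index tuples, and group the tuples by the partition they induce (which indices coincide), factoring the expectation over blocks by independence. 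A block in which the $Y$'s enter with total exponent $1$ carries a factor $\E Y=0$ and is discarded; hence every index coming from an $m_{1,n}$ factor must be shared with another factor, and the number of surviving blocks fixes the power of $n$.

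Parts~(\ref{prop.exp.cov.conv(a)}) and~(\ref{prop.exp.cov.conv(b)}) then follow by reading off leading terms. For~(\ref{prop.exp.cov.conv(a)}) the $j=0$ term gives $\E(m_{k,n})=\mu_k$, while each $j\ge1$ term obeys $\E\!\big(m_{1,n}^{\,j}m_{k-j,n}\big)=O\!\big(n^{-\lceil j/2\rceil}\big)=O(1/n)$, because the $j$ exponent--one indices form at most $\lfloor j/2\rfloor$ blocks among themselves together with one more block from $m_{k-j,n}$, against $j+1$ summations; thus $\E(M_{k,n})=\mu_k+O(1/n)=\mu_k+o(1/\surd{n})$, and since no single index ever carries exponent above $k$, all moments that occur are finite under $\E|X|^{k}<\infty$. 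Part~(\ref{prop.exp.cov.conv(b)}) is the same computation applied to $\Cov(\bar X_n,M_{k,n})=\E(m_{1,n}M_{k,n})$ (the product of means vanishes since $\E m_{1,n}=0$): only the terms $j=0$ and $j=1$ survive at order $1/n$, contributing $\mu_{k+1}/n$ and $-k\sigma^2\mu_{k-1}/n$, while every $j\ge2$ term is $O(1/n^2)$. This reproduces $v_{1k}/n$ as in \eqref{elements.V2}, the largest exponent now being $k+1$, which matches the hypothesis $\E|X|^{k+1}<\infty$.

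For part~(\ref{prop.exp.cov.conv(c)}) I would linearise, following the \emph{delta}-method that underlies \Cref{prop.lim}. Writing $W_i^{(k)}\doteq (Y_i^{k}-\mu_k)-k\mu_{k-1}Y_i$, set $L_{k,n}\doteq\frac1n\sum_i W_i^{(k)}$ and $R_{k,n}\doteq M_{k,n}-\mu_k-L_{k,n}$; the identity above makes the remainder explicit,
\[
 R_{k,n}=-k\,m_{1,n}\big(m_{k-1,n}-\mu_{k-1}\big)+\sum_{j=2}^{k}\binom{k}{j}(-1)^j m_{1,n}^{\,j}m_{k-j,n},
\]
each summand being a product of at least two centred averages. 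Since $L_{r,n}$ and $L_{k,n}$ are centred sums of the same i.i.d.\ terms, $\Cov(L_{r,n},L_{k,n})=\frac1n\,\E\!\big(W_1^{(r)}W_1^{(k)}\big)$, and a direct expansion of $\E(W_1^{(r)}W_1^{(k)})$ reproduces precisely the entry $v_{rk}$ of \eqref{elements.V3}. It then remains to show that $\Cov(L_{r,n},R_{k,n})$, $\Cov(R_{r,n},L_{k,n})$ and $\Cov(R_{r,n},R_{k,n})$ are all $O(1/n^2)=o(1/n)$; this is again index--partition counting, the extra centred factors in each remainder forcing at least one further index coincidence and so lowering the order by $1/n$ below the leading $v_{rk}/n$.

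The main obstacle, and the reason the remainders must be handled with care, is the bookkeeping of moments. Taking without loss of generality $r\le k$, the only hypothesis is $\E|X|^{k+r}<\infty$, so I would expand each of the three remainder covariances \emph{directly} as expectations of products of the $Y_i$ of total degree $r+k$; every monomial that arises then attaches exponent at most $r+k$ to any single index and is finite under the assumption. The tempting shortcut of bounding $|\Cov(R_{r,n},R_{k,n})|$ by $\sqrt{\Var(R_{r,n})\,\Var(R_{k,n})}$ must be avoided, since $\Var(R_{k,n})$ involves the $Y_i$ to degree $2k$ and would spuriously require a finite moment of order $2k>k+r$. Carrying out the direct expansion while simultaneously tracking the order in $n$ and the total exponent is therefore the crux; specialising to $r=k$ (so $\E|X|^{2k}<\infty$) gives $n\Var(M_{k,n})\to v_k^2$, the final assertion.
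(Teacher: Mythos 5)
Your proposal is correct, but it reaches the result by a genuinely different route than the paper. The common ground is the binomial expansion of $M_{k,n}$ into products of the power sums $m_{1,n},\ldots,m_{k,n}$ (the paper's \eqref{central}); after that the techniques diverge. The paper evaluates the leading terms by direct expectation computations but disposes of all the negligible terms \emph{analytically}: it invokes \Cref{lem.ui} (convergence of $\E|\surd{n}(\bar X_n-\mu)|^\alpha$, a uniform-integrability result) together with \Cref{lem.holder} and H\"older's inequality with exponents tuned to the available moment $r+k$. You instead do everything \emph{combinatorially}, by index-partition counting with the rule that any block of total exponent one carries the factor $\E(X-\mu)=0$; this is elementary and self-contained (no uniform-integrability input), and it actually yields sharper remainders, $O(1/n)$ in part~(\ref{prop.exp.cov.conv(a)}) and $O(1/n^2)$ in parts~(\ref{prop.exp.cov.conv(b)}) and~(\ref{prop.exp.cov.conv(c)}), where the paper only claims $o(1/\surd{n})$ and $o(1/n)$. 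For part~(\ref{prop.exp.cov.conv(c)}) the organizational difference is the most pronounced: the paper multiplies the two full expansions \eqref{central_r}, \eqref{central_k} and verifies sixteen separate limits (i)--(xvi), whereas you first linearize, $M_{k,n}-\mu_k=L_{k,n}+R_{k,n}$ with $L_{k,n}=\frac1n\sum_i[(X_i-\mu)^k-\mu_k-k\mu_{k-1}(X_i-\mu)]$, obtain the leading term in one line as $\Cov(L_{r,n},L_{k,n})=\frac1n\E(W_1^{(r)}W_1^{(k)})=v_{rk}/n$ (which matches the paper's own identity $v_{rk}=\Cov(U^r-r\mu_{r-1}U,\,U^k-k\mu_{k-1}U)$ from \Cref{sec.singular_distr}), and push all the bookkeeping into remainder covariances that are $O(1/n^2)$ because every remainder term carries at least two centred averaged factors. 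Your handling of the minimal moment hypothesis is also sound and parallels the paper's concern: the paper avoids needing $\E|X|^{2k}$ by choosing H\"older exponents $(r+k)/(r+k-j)$, while you avoid it by expanding covariances directly so that no block exponent ever exceeds $r+k$, correctly rejecting the Cauchy--Schwarz shortcut. The one thing to make explicit when writing this up is the counting lemma you use implicitly: the expectation of a product of $p$ centred averages and any number of uncentred averages is $O(n^{-\lceil p/2\rceil})$, since every centred index must share its block and so a partition with a surviving contribution has at most $\lfloor p/2\rfloor$ more blocks than uncentred factors; with that stated and proved (a few lines), your argument is complete.
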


 In the sequel, we shall make use of
 the following definition.
 \begin{definition}
 \label{def.asympt.ind}
 Let
 $k\in\{2,3,\ldots\}$ be fixed.
 \begin{enumerate}[itemsep=.5ex, topsep=0ex, wide, labelwidth=!, labelindent=0pt, label=\rm(\alph*), ref=\textcolor{black}{\alph*}]
 \item
 \label{def.asympt.ind(a)}
 The sample mean, $\bar{X}_n$, is called
 {\it asymptotically independent}
 of the sample central moment, $M_{k,n}$, if
 there exist independent random variables $W_1$ and $W_k$ such
 that
 \[
 \surd{n}
 \left({\bar{X}_n-\mu\atop M_{k,n}-\mu_k}\right)
 \distr
 \left({W_1 \atop W_k}\right);
 \]
 \item
 \label{def.asympt.ind(b)}
 $\bar{X}_n$ is called {\it asymptotically independent}
 of the random vector $\bMkB$ if
 there exist random variables $W_1,\ldots,W_k$
 such that $W_1$, $\bW_{k}^*=(W_2,\ldots.W_k)'$ are independent
 and
 \[
 \surd{n}\left({\ds\bar{X}_n-\mu\atop\ds\bMkB-\bmukB}\right)
 \distr
 \left({\ds W_1\atop\ds \bW_{k}^*}\right);
 \]
 \item
 \label{def.asympt.ind(c)}
 $\bar{X}_n$ and $M_{k,n}$ are called
 {\it asymptotically uncorrelated} if
 \[
 \Cov\left(\surd{n}\bar{X}_n,\surd{n}M_{k,n}\right) \to 0.
 \]
 \end{enumerate}
 \end{definition}
 \begin{remark}
 Assume that $\E|X|^{2k}<\infty$ for some
 $k\in\{2,3,\ldots\}$. According to \eqref{limit4} and
 \Cref{prop.exp.cov.conv}\eqref{prop.exp.cov.conv(b)}
 (see, also, \eqref{cov.conv2}),
 $\bar{X}_n$ and $M_{k,n}$ are
 asymptotically independent if and only if they are
 asymptotically uncorrelated.
 Also, the asymptotic normality of \Cref{prop.lim}
 shows that $\bar{X}_n$ and $\bMkB$
 are asymptotically independent if and only if
 $\bar{X}_n$ and $M_{r,n}$ are
 asymptotically uncorrelated for all $r\in\{2,\ldots,k\}$.
 If we merely assume that $\E |X|^{k+1}<\infty$,
 then, even for those cases where
 the limiting distribution of $\surd{n}(M_{k,n}-\mu_k)$
 does not exist, \Cref{prop.exp.cov.conv}\eqref{prop.exp.cov.conv(b)}
 enables one to decide if  $\bar{X}_n$ and $M_{k,n}$ are
 asymptotically uncorrelated, or not.
 \end{remark}

 Assume now $X\sim N(\mu,\sigma^2)$.
 Observing the dispersion matrix in \eqref{limit.gen}, it becomes clear
 that the first column -- except of the first element, $\sigma^2$ -- vanish.
 This is so because $\mu_k=0$ for all odd $k$ and
 $\mu_{2r}=\sigma^{2r}(2r)!/(2^r r!)$; thus,
 for any  $k\in\{2,3,\ldots\}$, $\mu_{k+1}=k\sigma^2\mu_{k-1}$.
 According to \Cref{def.asympt.ind}, this means
 that $\bar{X}_n$ is asymptotically independent
 (uncorrelated) of all $M_{k,n}$.
 But, this is not a surprising fact for the normal
 distribution, since it is well-known that for any fixed
 $n\geq 2$, $\bar{X}_n$ is independent of the vector
 $\bZ\doteq(X_1-\bar{X}_n,\ldots,X_n-\bar{X}_n)'$
 (it suffices to observe that
 $(\bar{X}_n,X_1-\bar{X}_n,\ldots,X_{n}-\bar{X}_n)'$
 follows a multivariate normal distribution
 and $\Cov(\bar{X}_n,X_i-\bar{X}_n)=0$ for all $i$)
 and, therefore, $\bar{X}_n$ is stochastically independent
 (and uncorrelated) of any
 sequence of the form
 $\{h_r(\bZ), r=2,3,\ldots\}$, where $h_r\colon\R^n\to\R$
 are arbitrary Borel functions. Since
 $M_{r,n}={n}^{-1}\sum_{i=1}^n(X_i-\bar{X}_n)^r=h_r(\bZ)$,
 it follows that $\bar{X}_n$
 and $\bMkB$ are independent
 (and, thus, $\bar{X}_n$ and $M_{k,n}$ are uncorrelated)
 for all $k$ and $n$ and,
 certainly, the same is true for their limiting
 distributions. The interesting fact is that
 the converse is also true, i.e., the asymptotic independence
 of $\bar{X}_n$ and $M_{k,n}$ for all $k$ characterizes
 normality.
 \begin{theorem}
 \label{theo.char}
 Assume that $X$ is non-degenerate and has finite moments of any order.
 If $\bar{X}_n$ and $M_{k,n}$ are asymptotically
 independent {\rm (or, merely, asymptotically
 uncorrelated)}
 for all $k\in\{2,3,\ldots\}$,
 then $X$ follows a normal distribution.
 \end{theorem}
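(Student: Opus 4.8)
The plan is to convert the asymptotic hypothesis into an explicit system of equations for the central moments of $X$, solve that system, recognize the solution as the moment sequence of a normal law, and then invoke moment-determinacy to conclude. To begin, I would reduce the hypothesis to a moment condition. Since $X$ has finite moments of every order, $\E|X|^{2k}<\infty$ for all $k$, so the equivalence recorded in the Remark following \Cref{def.asympt.ind} applies at each order: asymptotic independence (equivalently, asymptotic uncorrelatedness) of $\bar{X}_n$ and $M_{k,n}$ holds if and only if the limiting covariance vanishes. By \eqref{limit4} together with \Cref{prop.exp.cov.conv}\eqref{prop.exp.cov.conv(b)} this covariance equals $\mu_{k+1}-k\sigma^2\mu_{k-1}$, so the hypothesis is precisely
\begin{equation*}
\mu_{k+1}=k\sigma^2\mu_{k-1}\qquad\text{for every }k\in\{2,3,\ldots\}.
\end{equation*}

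Next I would solve this recursion. Since $\mu_1=0$, the case $k=2$ gives $\mu_3=0$, and an immediate induction over the odd indices yields $\mu_{2r+1}=0$ for all $r$. For the even indices, writing the relation with $k=2r-1$ gives $\mu_{2r}=(2r-1)\sigma^2\mu_{2r-2}$, which telescopes (using $\mu_2=\sigma^2$) to $\mu_{2r}=(2r-1)!!\,\sigma^{2r}=(2r)!\,\sigma^{2r}/(2^r r!)$. As noted in the discussion preceding the theorem, these are exactly the central moments of $N(\mu,\sigma^2)$; hence the entire central-moment sequence of $X$ coincides with that of the normal distribution with the same mean and variance.

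Finally I would pass from equality of all moments to equality of distributions. The normal law $N(\mu,\sigma^2)$ is moment-determinate: its moment sequence generates a moment generating function with infinite radius of convergence, which is the classical sufficient condition under which a moment sequence can arise from at most one distribution (equivalently, its moments satisfy Carleman's criterion). Because $X$ has finite moments of all orders that agree with those of $N(\mu,\sigma^2)$, determinacy forces $X\sim N(\mu,\sigma^2)$.

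The only genuinely substantive step is the last one: the moment computation is an elementary telescoping recursion, and the first reduction is a direct application of the Remark, so the crux is recording and applying the moment-determinacy of the normal law. The single technicality worth noting is that the Remark's equivalence is invoked at every $k$, which is legitimate here precisely because the blanket finiteness of all moments supplies the required $\E|X|^{2k}<\infty$ at each order.
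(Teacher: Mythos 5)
Your proposal is correct and follows essentially the same route as the paper's proof: reduce the hypothesis via \Cref{prop.exp.cov.conv}\eqref{prop.exp.cov.conv(b)} to the moment relations $\mu_{k+1}=k\sigma^2\mu_{k-1}$ for all $k\geq 2$, solve them to recover the normal central-moment sequence, and conclude by moment-determinacy of the normal law. The only cosmetic difference is that you justify determinacy via the finite (indeed infinite-radius) moment generating function and Carleman's criterion, where the paper simply cites Billingsley's Example 30.1.
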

 \begin{proof}
 From \Cref{prop.exp.cov.conv}\eqref{prop.exp.cov.conv(b)}
 (cf.\ \eqref{limit.gen},
 \eqref{limit4}) it follows that $\bar{X}_n$ and $M_{k,n}$ are
 asymptotically uncorrelated if and only if
 $\mu_{k+1}=k\sigma^2\mu_{k-1}$. Since we have assumed that
 this relation holds {\it for all} $k\geq 2$ it follows
 that $\mu_1=\mu_3=\mu_5=\cdots=0$ and, similarly,
 for all $r\in\{1,2,\ldots\}$,
 $\mu_{2r}=\sigma^{2r}(2r)!/(2^r r!)$. But, these are the
 moments of $N(0,\sigma^2)$, and since normal distributions
 are characterized by their moment sequence
 \citep[see, e.g.,][Example 30.1, p.~389]{Billingsley1995},
 we conclude that
 $X-\mu\sim N(0,\sigma^2)$.
 \hfill\qed
 \end{proof}

 Compared to the classical characterization of normality
 via the independence of $\bar{X}_n$ and
 $S^2_n=[n/(n-1)]M_{2,n}$, the {\it asymptotic independence}
 is a much weaker condition to enable a characterization
 result. For example, \eqref{limit4} and
 \Cref{prop.exp.cov.conv}\eqref{prop.exp.cov.conv(b)}
 with $k=2$  (cf.\ \eqref{cov.conv2}) shows that
 $\bar{X}_n$ and $S^2_n$ are asymptotically independent if
 and only if $\mu_3=0$, provided $\E |X|^3<\infty$.
 Clearly, the relation $\E(X-\mu)^3=0$
 is satisfied by any symmetric distribution
 with finite third moment and by many others.
 On the other hand, the requirement that the asymptotic
 independence has to be fulfilled for all $k\geq 2$ may
 be regarded as too restricted.
 However, the following result shows that any finite
 number of $k$s will not work.

 \begin{theorem}
 \label{theo.legendre}
 For any fixed $k\geq 2$, there exist {\rm (infinitely many)}
 non-degenerate non-normal
 random variables $X$ with finite moments of any order such that
 $\bar{X}_n$ and $\bMkB$ are
 asymptotically independent.
 \end{theorem}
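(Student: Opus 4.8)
The plan is to reduce the statement to a moment-matching problem and then to solve it by an explicit perturbation of the normal density built from a Legendre polynomial. By the Remark following \Cref{def.asympt.ind}, the asymptotic normality of \Cref{prop.lim} guarantees that $\bar{X}_n$ and $\bMkB$ are asymptotically independent precisely when $\bar{X}_n$ and $M_{r,n}$ are asymptotically uncorrelated for every $r\in\{2,\ldots,k\}$; and by \Cref{prop.exp.cov.conv}\eqref{prop.exp.cov.conv(b)} the latter holds if and only if $\mu_{r+1}=r\sigma^2\mu_{r-1}$ for each such $r$. Since the central moments of $N(0,\sigma^2)$ satisfy exactly these relations (as recalled in the paragraph preceding \Cref{theo.char}), the problem becomes: exhibit infinitely many non-degenerate, non-normal laws, with finite moments of every order, whose first $k+1$ central moments agree with those of a standard normal. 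Observe that only the moments up to order $k+1$ are constrained, which leaves the moment of order $k+2$ (and all higher ones) free; this is where non-normality will be injected.

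For the construction I would perturb the standard normal density $\phi$ by a bounded, compactly supported function designed to annihilate the low-order moments through orthogonality. Let $P_m$ denote the Legendre polynomial of degree $m$ on $[-1,1]$ and set
\[
 f_\epsilon(x)\;=\;\phi(x)+\epsilon\,P_{k+2}(x)\,\mathds{1}_{[-1,1]}(x).
\]
The defining orthogonality of Legendre polynomials, $\int_{-1}^{1}x^{j}P_{k+2}(x)\,\mathrm{d}x=0$ for every $j\le k+1$, ensures that the perturbation contributes nothing to the raw moments of orders $0,1,\ldots,k+1$; hence $f_\epsilon$ integrates to $1$, has mean $0$ and variance $1$, and its first $k+1$ central moments coincide with those of $N(0,1)$. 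On the other hand $\int_{-1}^{1}x^{k+2}P_{k+2}(x)\,\mathrm{d}x\neq 0$, so the moment of order $k+2$ is shifted away from its normal value; since the normal law is determined by its moment sequence (as invoked in \Cref{theo.char}), this forces $f_\epsilon$ to be non-normal for every $\epsilon\neq 0$.

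It remains to check that $f_\epsilon$ is a genuine probability density with the required regularity. Because $|P_{k+2}|\le 1$ on $[-1,1]$ while $\phi$ is bounded below by $\phi(1)>0$ there, one has $f_\epsilon\ge 0$ whenever $|\epsilon|\le\phi(1)$, and off $[-1,1]$ one simply has $f_\epsilon=\phi\ge 0$. All moments are finite since $0\le f_\epsilon\le\phi+|\epsilon|\,\mathds{1}_{[-1,1]}$, a function against which every power of $|x|$ is integrable, and $f_\epsilon$ is manifestly non-degenerate. Letting $\epsilon$ range over $(0,\phi(1)]$ produces infinitely many distinct laws, since they differ in their $(k+2)$-th moments, which completes the argument.

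The main obstacle is to arrange the perturbation so that three requirements hold simultaneously: preservation of all central moments up to order $k+1$, genuine non-normality, and nonnegativity of $f_\epsilon$. The Legendre polynomial resolves this tension elegantly: its orthogonality to low-degree monomials handles the moment constraints, its non-orthogonality to $x^{k+2}$ breaks normality, and its boundedness on $[-1,1]$ lets a small coefficient $\epsilon$ preserve positivity. Thus the only quantitative point left to verify is the explicit smallness bound on $\epsilon$, which is routine.
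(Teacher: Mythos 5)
Your proof is correct and takes essentially the same route as the paper: both perturb the standard normal density $\phi$ by a compactly supported Legendre-type polynomial of degree at least $k+2$, exploiting its orthogonality to $\{1,x,\ldots,x^{k+1}\}$ to preserve all moments up to order $k+1$ (hence, by \Cref{prop.lim} and \Cref{prop.exp.cov.conv}, asymptotic independence) while breaking normality, and choosing the coefficient small enough to keep the density nonnegative. The only immaterial differences are that the paper works with shifted Legendre polynomials on $[0,1]$ and generates infinitely many examples by varying the degree $m\geq k+2$ with a fixed admissible $\epsilon_m$, whereas you fix the degree at $k+2$ on $[-1,1]$ and vary $\epsilon$, also making explicit (via the perturbed $(k+2)$-th moment) the non-normality that the paper only asserts.
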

 \begin{proof}
 Let $\phi(x)\propto\exp(-x^2/2)$ be the
 standard normal density and consider the polynomial
 $P_{m}(x)\doteq(\ud{}^{m}/\ud{x}^{m})[x^{m}(1-x)^{m}]$;
 i.e., $P_m$ is the shifted Legendre polynomial of degree $m$.
 It is well-known that for all $m\geq k+2$,
 $P_{m}$ is orthogonal to $\{1,x,\ldots,x^{k+1}\}$ in the interval
 $[0,1]$, that is,
 \[
 \int_{0}^1 x^{j}P_{m}(x)\ud{x}=0, \quad j=0,\ldots,k+1.
 \]
 Since $P_m$ is continuous on $[0,1]$, it follows that
 $0<\max_{x\in[0,1]}|P_m(x)|\doteq a_m<\infty$. Also,
 $\min_{x\in[0,1]}\phi(x)=\phi(1)
 =(2\pi e)^{-1/2}>0$.
 Clearly, we can choose an $\epsilon_m>0$ small
 enough to guarantee that
 $\phi(x)+\epsilon_m P_m(x)>0$ for all $x\in[0,1]$
 (e.g., $\epsilon_m=[2a_m(2\pi e)^{1/2}]^{-1}$ suffices).
 Now, define a sequence of probability densities $\{f_m, \ m\geq k+2\}$
 by
 \[
 f_m(x)=\phi(x)+\epsilon_m P_m(x)\mathds{1}_{\{0\leq x\leq 1\}},
 \quad x\in\R,
 \]
 where $\mathds{1}$ denotes the indicator function (see \Cref{figure}).
 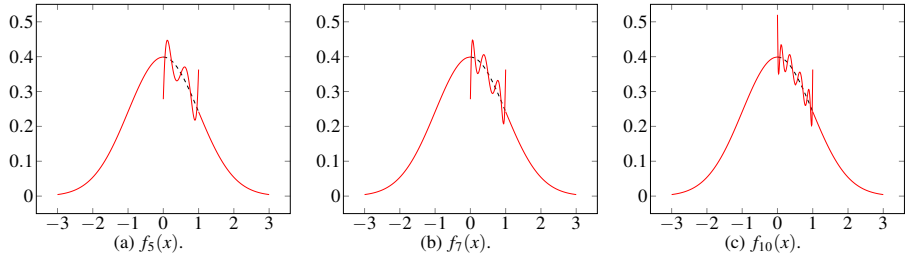
\begin{figure}[htp]
 \begin{subfigure}[a]{.32\textwidth}
 \centering
 \resizebox{\linewidth}{!}{
 \begin{tikzpicture}[declare function={PP(\x)=-(1 -30*\x +210*\x^2 -560*\x^3 +630*\x^4 -252*\x^5);
                                       P(\x)=PP(\x)*(\x<=0.5)-PP(1-\x)*(\x>0.5);}]
 \begin{axis}[ymax=.55, font=\Large,ytick={0,.1,.2,.3,.4,.5}]
 \addplot[red, domain=-3:0, samples=30,smooth]{exp(-x^2 /2)/(sqrt(2*pi))};
 \addplot[red, domain=1:3, samples=20,smooth]{exp(-x^2 /2)/(sqrt(2*pi))};
 \addplot[domain=0:1, samples=10,smooth,dashed]{exp(-x^2 /2)/(sqrt(2*pi))};
 \addplot[red, domain=0:1, samples=40,smooth]{exp(-x^2 /2)/(sqrt(2*pi))+P(x)/(2*4.133)};
 \end{axis}
 \end{tikzpicture}
 }
 \caption{$f_{5}(x)$.}
 \label{figure(a)}
 \end{subfigure}
 \hfill
 \begin{subfigure}[a]{.32\textwidth}
 \centering
 \resizebox{\linewidth}{!}{
 \begin{tikzpicture}[declare function={PP(\x)=-(1-56*\x+756*\x^2-4200*\x^3+11550*\x^4-16632*\x^5+12012*\x^6-3432*\x^7);
                                       P(\x)=PP(\x)*(\x<=0.5)-PP(1-\x)*(\x>0.5);}]
 \begin{axis}[ymax=.55, font=\Large,ytick={0,.1,.2,.3,.4,.5}]
 \addplot[red, domain=-3:0, samples=30,smooth]{exp(-x^2 /2)/(sqrt(2*pi))};
 \addplot[red, domain=1:3, samples=20,smooth]{exp(-x^2 /2)/(sqrt(2*pi))};
 \addplot[domain=0:1, samples=10,smooth,dashed]{exp(-x^2 /2)/(sqrt(2*pi))};
 \addplot[red, domain=0:1, samples=40,smooth]{exp(-x^2 /2)/(sqrt(2*pi))+P(x)/(2*4.133)};
 \end{axis}
 \end{tikzpicture}
 }
 \caption{$f_{7}(x)$.}
 \label{figure(b)}
 \end{subfigure}
 \hfill
 \begin{subfigure}[a]{.32\textwidth}
 \centering
 \resizebox{\linewidth}{!}{
 \begin{tikzpicture}[declare function={PP(\x)=1-110*\x+2970*\x^2-34320*\x^3+210210*\x^4-756756*\x^5+1681680*\x^6-2333760*\x^7+1969110*\x^8-923780*\x^9+184756*\x^(10);
                                       P(\x)=PP(\x)*(\x<=0.5)+PP(1-\x)*(\x>0.5);}]
 \begin{axis}[ymax=.55, font=\Large,ytick={0,.1,.2,.3,.4,.5}]
 \addplot[red, domain=-3:0, samples=30,smooth]{exp(-x^2 /2)/(sqrt(2*pi))};
 \addplot[red, domain=1:3, samples=20,smooth]{exp(-x^2 /2)/(sqrt(2*pi))};
 \addplot[domain=0:1, samples=10,smooth,dashed]{exp(-x^2 /2)/(sqrt(2*pi))};
 \addplot[red, domain=0:1, samples=52,smooth]{exp(-x^2/2)/(sqrt(2*pi)) +P(x)/(2*4.133)};
 \end{axis}
 \end{tikzpicture}
 }
 \caption{$f_{10}(x)$.}
 \label{figure(c)}
 \end{subfigure}
 \caption{The densities $f_{m}(x)$ for $m=5,7$ and $10$.}
 \label{figure}
 \end{figure}

 \noindent
 If $X_m$ has density $f_m$, it is clear that, for $j=0,\ldots, k+1$,
 \[
 \E\left(X_m^j\right) = \int_{\R} x^j \phi(x)\ud{x} +\epsilon_m \int_{0}^1
  x^j P_m(x) \ud{x} = \int_{\R} x^j \phi(x)\ud{x} =\E\left(Z^j\right),
 \]
 where $Z\sim N(0,1)$. Obviously,
 each  $X_m$ has finite
 moments of any order, is non-normal, non-degenerate,
 and, by \Cref{prop.lim},
 $\bar{X}_n$ and $\bMkB$
 are asymptotically independent.
 \hfill\qed
 \end{proof}

 \section{The singular distributions}
 \label{sec.singular_distr}

 First, center the rv $X$ as $U=X-\mu$
 with $\E(U^j)=\mu_j$ for all $j$.
 Assume that $\E|X|^{2k}<\infty$ for some $k=2,3,\ldots$
 and consider the random vector
 \[
 \bU_k=\left(U,U^2,U^3-3\sigma^2 U,U^4-4\mu_3 U,\ldots,U^k-k\mu_{k-1}U\right)'.
 \]
 It is of some interest to observe that
 the variance-covariance matrix of the limiting distribution
 in \eqref{limit.gen} coincides with the variance-covariance
 matrix of $\bU_k$. In particular,
 \begin{equation}
 \label{variance}
 v_k^2=\Var\left(U^k-k\mu_{k-1}U\right),
 \end{equation}
 \[
 \mu_{k+1}-k\sigma^2\mu_{k-1}
 =\Cov\left(U,U^k-k\mu_{k-1}U\right),
 \ \
 v_{rk}=\Cov\left(U^r-r\mu_{r-1}U,U^k-k\mu_{k-1}U\right),
 \]
 $r=2,\ldots,k-1$.
 Relation \eqref{variance} evidently shows
 that $v_k^2\geq 0$. Of course,
 the non-negativity of $v_k^2$ is a consequence of
 the fact that, by \Cref{prop.exp.cov.conv}\eqref{prop.exp.cov.conv(c)},
 $v_k^2=\lim_n \Var(\surd{n}M_{k,n})$;
 but, the point here is that we have not to refer to a limit.
 Moreover, the expression \eqref{variance}
 enables to describe all distributions for which
 $v_k^2=0$. Such distributions will be called {\it
 singular}, according to the following definition.
 \begin{definition}
 \label{def.singular}
 For fixed $k\geq 2$, a non-degenerate random variable $X$, or its
 distribution function $F$, is
 called singular (of order $k$)
 if $\E|X|^{2k}<\infty$ and
 \[
 \surd{n}(M_{k,n}-\mu_k)\prob 0.
 \]
 The set of all singular random variables of order $k$ will
 be denoted by $\CF_k$; the subset of all standardized
 (with mean $0$ and variance $1$) singular random variables
 of order $k$ will be denoted by $\CF^0_k$.
 \end{definition}

 Noting that $Y\in \CF^0_k$ if and only if
 $X\doteq\mu+\sigma Y\in \CF_k$ for some $\mu\in\R$ and
 $\sigma>0$, it follows that $\CF_k$ contains exactly
 the location-scale family of the random variables
 that belong to $\CF_k^0$.
 According to \eqref{limit2}, \eqref{variance}, and
 \Cref{prop.exp.cov.conv}\eqref{prop.exp.cov.conv(a)},\eqref{prop.exp.cov.conv(c)}
 (cf.\ \eqref{exp.conv2}),
 $X\in\CF_k$ if and only if $v_k^2=0$ or, equivalently,
 \begin{equation}
 \label{singular}
 (X-\mu)^k=\mu_k+k\mu_{k-1}(X-\mu) \quad \textrm{with probability
 one}.
 \end{equation}
 We also note that $\CF_k$ is non-empty for all
 $k\geq 2$. Indeed, it is easily seen that
 the random variable $Y$ with $\Pr(Y=\pm 1)=1/2$
 belongs to $\CF_{2k}^0\subseteq \CF_{2k}$,
 $k=1,2,\ldots$, because $\mu=\E(Y)=0$, $\sigma^2=\E(Y^2)=1$,
 $\mu_{2k}=\E(Y^{2k})=1$, $\mu_{2k-1}=\E(Y^{2k-1})=0$ and
 $Y^{2k}=\mu_{2k}+2k\mu_{2k-1} Y$ with probability one.
 Similarly, for every $k\in\{1,2,\ldots\}$,
 the three-valued symmetric random variable $Y_{2k+1}$
 with $\Pr(Y_{2k+1}=\pm \sqrt{2k+1})=1/[2(2k+1)]$ and
 $\Pr(Y_{2k+1}=0)=2k/(2k+1)$ belongs to
 $\CF^0_{2k+1}$. Moreover, we shall show below
 (\Cref{lem.odd}) that we can find a unique
 value of $p=p_{2k+1}\in(1/2,1)$,
 for which the two-valued random variable $W_{2k+1}$,
 with
 \[
 \Pr\left(W_{2k+1}=\sqrt{{(1-p)}/{p}}\right)
 =p
 =1-\Pr\left(W_{2k+1}=-\sqrt{{p}/{(1-p)}}\right),
 \]
 is such that $W_{2k+1}\in \CF^0_{2k+1}$.

 In general, it is easily seen that the equation $y^k=\alpha+\beta y$
 (with $\alpha,\beta\in \R$) has at most two real solutions
 for even $k$, and at most three solutions for odd $k$.
 Assuming that $X\sim F$ and $X\in\CF_k$, it follows
 from \eqref{singular} that $X$ takes at most two values
 (and hence, exactly two values, since $X$ has been assumed to be
 non-degenerate) if $k$ is even, and two or three values if
 $k$ is odd. This follows from the fact
 that the points of increase of $F$ cannot be more
 than three, if $k$ is odd, and more than two, if $k$ is even.
 To see this assume, e.g., that $k$ is odd,
 $X\sim\CF_k$, $\E(X)=\mu$,
 $\E(X-\mu)^k=\mu_k$ and $\E(X-\mu)^{k-1}=\mu_{k-1}$.
 Let $x_1<x_2<x_3<x_4$ be four distinct points of increase of
 $F$. Then, there exists at least one $x_i$ for which
 $(x_i-\mu)^k-\mu_k-k\mu_{k-1}(x_i-\mu)\neq 0$, and thus, we can find
 a small $\epsilon>0$ such that $(x-\mu)^k\neq \mu_k+k\mu_{k-1}(x-\mu)$
 for
 all $x\in(x_i-\epsilon, x_i+\epsilon]$. Hence,
 $\Pr(x_i-\epsilon<X\leq x_i+\epsilon)\leq
 \Pr[(X-\mu)^k\neq
 \mu_k+k\mu_{k-1}(X-\mu)]$.
 Since, however, $x_i$ is a point of increase of $F$,
 we have
 $0<F(x_i+\epsilon)-F(x_i-\epsilon)
 =\Pr(x_i-\epsilon<X\leq x_i+\epsilon)
 \leq\Pr[(X-\mu)^k \neq \mu_k+k\mu_{k-1}(X-\mu)]$,
 which contradicts \eqref{singular}.
 The same arguments apply to the case where $k$ is even.

 Therefore, we have the following description.
 \begin{proposition}
 \label{prop.singular}
 If $k\geq 2$ is even, then $\CF_k$ contains only
 two-valued random variables.
 If $k\geq 3$ is odd, then $\CF_k$ contains only
 two-valued and three-valued random variables.
 \end{proposition}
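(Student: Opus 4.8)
The plan is to reduce everything to the pointwise characterization \eqref{singular}: a non-degenerate $X$ belongs to $\CF_k$ if and only if $(X-\mu)^k=\mu_k+k\mu_{k-1}(X-\mu)$ almost surely. Writing $U=X-\mu$, this says that $U$ is, with probability one, a real root of the polynomial $P(y)\doteq y^k-k\mu_{k-1}y-\mu_k$. Thus the whole question collapses to two sub-questions: how many distinct real roots can $P$ have, and how does the support of $U$ relate to this root set?

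First I would count the real roots by examining $P'(y)=k\bigl(y^{k-1}-\mu_{k-1}\bigr)$. When $k$ is even, $k-1$ is odd, so $y\mapsto y^{k-1}$ is strictly increasing on $\R$ and $P'$ vanishes at exactly one point; hence $P$ has a single critical point and, being of even degree with positive leading coefficient, admits at most two real roots. When $k$ is odd, $k-1$ is even, so $y^{k-1}=\mu_{k-1}$ has at most two solutions, $P$ has at most two critical points, and therefore at most three real roots. This is exactly the dichotomy recorded before the statement.

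The key remaining step — and the one deserving care — is to pass from the \emph{roots of $P$} to the \emph{points of increase of $F$}. I would argue by contradiction: suppose $x_0$ is a point of increase of $F$ with $P(x_0-\mu)\neq0$. By continuity of $P$ there is an $\epsilon>0$ for which $P(\cdot-\mu)$ stays nonzero throughout $(x_0-\epsilon,x_0+\epsilon]$; since $x_0\in S_F$ this interval carries positive probability, so $\Pr\bigl[(X-\mu)^k\neq\mu_k+k\mu_{k-1}(X-\mu)\bigr]>0$, contradicting \eqref{singular}. Hence every point of increase of $F$ lies in $\mu+\{\,y\colon P(y)=0\,\}$, a set of at most two points when $k$ is even and at most three when $k$ is odd. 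This is precisely the $\epsilon$-argument already sketched in the paragraph preceding the proposition, and it is the only place where a little vigilance is needed: one must rule out stray mass sitting at non-roots rather than merely counting algebraic solutions.

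Finally I would invoke non-degeneracy: $S_F$ contains at least two elements, so the support has exactly two points when $k$ is even, and either two or three points when $k$ is odd — which is the claimed description. I do not anticipate a genuine obstacle here, since the root count is elementary calculus and the support inclusion is the routine continuity argument above; the proposition is really just a clean packaging of the consequences of \eqref{singular}.
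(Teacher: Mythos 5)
Your proof is correct and follows essentially the same route as the paper: reduce membership in $\CF_k$ to the almost-sure polynomial identity \eqref{singular}, bound the number of real roots of $y^k-k\mu_{k-1}y-\mu_k$ (two for even $k$, three for odd $k$), and use the $\epsilon$-continuity argument to show every point of increase of $F$ must be a root, with non-degeneracy supplying the lower bound. The only difference is that you make the root count explicit via the critical points of $P'(y)=k\left(y^{k-1}-\mu_{k-1}\right)$, a step the paper dismisses as ``easily seen.''
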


 Our purpose is to describe all singular distributions and
 to obtain a second order non-degenerate distributional limit
 for $M_{k,n}-\mu_k$.
 Firstly, we consider the two-valued distributions
 because they are possible members of $\CF_k$.
 \begin{lemma}
 \label{lem.even} Let $X\sim b(p)$, i.e.,
 $\Pr(X=1)=p=1-\Pr(X=0)$ for some $p\in(0,1)$.
 Then, $X\in\CF_2$ if and only if $p=1/2$.
 Moreover, if $k\geq 4$ is even, then
 $X\in \CF_k$
 if and only if $p\in\{1-p_k,1/2,p_k\}$,
 where $p_k$ is the unique root
 of the equation
 \begin{equation}
 \label{eq.even}
 \left(\frac{p}{1-p}\right)^{k-1}=\frac{(k+1)p-1}{k-(k+1)p},
 \qquad \frac{k-2}{k-1}<p<\frac{k}{k+1};
 \end{equation}
 in particular, $p_4={1}/{2}+{\surd{3}}/{6}$
 and $p_6={1}/{2}+{\sqrt{15(4\surd{10}-5)}}/{30}$.
 \end{lemma}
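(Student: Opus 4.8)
The plan is to convert singularity into a single polynomial equation in $p$ and then count and locate its roots in $(0,1)$. Centre $X$ as $U=X-\mu$; since $X\sim b(p)$, the variable $U$ takes the two values $q\doteq1-p$ (with probability $p$) and $-p$ (with probability $q$). By \eqref{variance}, $v_k^2=\Var(U^k-k\mu_{k-1}U)$, and a two-valued random variable has zero variance exactly when it is constant; hence $X\in\CF_k$ iff $U^k-k\mu_{k-1}U$ takes the same value at $q$ and at $-p$. As $k$ is even this reads $q^k-p^k=k\mu_{k-1}$, and using the explicit central moment $\mu_{k-1}=pq^{k-1}-qp^{k-1}$ this is the polynomial equation
\[
X\in\CF_k\iff\Phi(p)\doteq q^{k-1}(q-kp)-p^{k-1}(p-kq)=0.
\]
I would first record that $\Phi(1-p)=-\Phi(p)$, so $p=1/2$ is always a root and all other roots pair up as $\{p,1-p\}$. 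For $k=2$ one computes $\Phi(p)=q^2-p^2=1-2p$, whose only root is $p=1/2$; this proves the first assertion.

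For even $k\ge4$ I would divide $\Phi(p)=0$ by the nonvanishing factor $p^{k-1}(q-kp)$ to obtain the equivalent equation $h(p)=r(p)$, where $h(p)\doteq(p/q)^{k-1}$ and $r(p)\doteq\frac{(k+1)p-1}{k-(k+1)p}$ is precisely the right-hand side of \eqref{eq.even}. On $[\frac{k}{k+1},1)$ one has $q-kp<0$ and $p-kq\ge0$, so $\Phi(p)<0$ and there is no root. On $(\frac12,\frac{k}{k+1})$ both $h$ and $r$ are positive, so I would examine $D(p)\doteq\ln h(p)-\ln r(p)$, which satisfies $D(\frac12)=0$ and $D(p)\to-\infty$ as $p\uparrow\frac{k}{k+1}$. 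A direct computation yields
\[
D'(p)=\frac{k-1}{pq}-\frac{k^2-1}{[(k+1)p-1][k-(k+1)p]};
\]
clearing the (positive) denominators and writing $u=(k+1)p$, the sign of $D'$ equals that of the concave parabola $-u^2+(k+1)u-(k+1)$. Its larger root lies strictly inside the relevant range $u\in(\frac{k+1}2,k)$ for $k\ge4$, so $D'$ is positive then negative: $D$ rises from $0$ to a single maximum and then decreases to $-\infty$, giving exactly one zero $p_k$ in $(\frac12,\frac{k}{k+1})$. The symmetry $\Phi(1-p)=-\Phi(p)$ then supplies the unique companion root $1-p_k$ in $(0,\frac12)$, so the full solution set in $(0,1)$ is $\{1-p_k,\frac12,p_k\}$.

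The main obstacle is exactly this unimodality of $D$: reducing the sign of $D'$ to one quadratic in $u$ and checking that its larger root sits between $\frac{k+1}2$ and $k$ is what delivers both existence and uniqueness. To place $p_k$ in the sharper interval of \eqref{eq.even} I would evaluate at the left endpoint: at $p=\frac{k-2}{k-1}$ one gets $h=(k-2)^{k-1}$ and $r=\frac12(k^2-2k-1)$, and since $2(k-2)^{k-1}>k^2-2k-1$ for every $k\ge4$, we have $D(\frac{k-2}{k-1})>0$, which forces $p_k>\frac{k-2}{k-1}$. Finally, for the explicit values I would use that $\frac{q^{j}-p^{j}}{q-p}$ is symmetric in $p,q$, so $\Phi(p)=(1-2p)R_k(pq)$ for a polynomial $R_k$ in $m\doteq pq$. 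For $k=4$ this gives $\Phi(p)=(1-2p)(6p^2-6p+1)$, hence $p_4=\frac12+\frac{\surd3}{6}$; for $k=6$ it gives $\Phi(p)=(1-2p)(15m^2-10m+1)$, and selecting the admissible root $m=\frac{5-\surd{10}}{15}\in(0,\frac14)$ and solving $m=p(1-p)$ yields $p_6=\frac12+\frac{\sqrt{15(4\surd{10}-5)}}{30}$.
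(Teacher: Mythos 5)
Your proposal is correct, and it genuinely departs from the paper's proof at the crucial root-counting step, so a comparison is worthwhile. Both arguments first reduce singularity to the same polynomial identity: your starting point via \eqref{variance} (a two-valued variable has zero variance iff it is constant) is equivalent to the paper's use of \eqref{singular} at $x=0,1$, and your $\Phi(p)=0$ is exactly the paper's \eqref{eq.even2}, with your antisymmetry $\Phi(1-p)=-\Phi(p)$ playing the role of the paper's observation that $1-p$ is a root whenever $p$ is. The paper then substitutes $t=p/(1-p)$ and studies the degree-$k$ polynomial $t^k-kt^{k-1}+kt-1$ of \eqref{eq.even3} through two rounds of differentiation, obtaining an increase--decrease--increase shape and hence exactly the three roots $\{1/t_2,1,t_2\}$. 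You instead stay in the variable $p$, first exclude $[k/(k+1),1)$ by a direct sign check on $\Phi$, and on $(1/2,k/(k+1))$ take logarithms, so that the sign of $D'$ collapses to the concave quadratic $-u^2+(k+1)u-(k+1)$ in $u=(k+1)p$, whose larger root $\bigl((k+1)+\sqrt{(k+1)(k-3)}\bigr)/2$ indeed lies in $((k+1)/2,k)$ because $(k+1)(k-3)<(k-1)^2$; this gives unimodality of $D$ and a unique zero, with the companion root supplied by antisymmetry. The trade-offs: the paper's route yields the lower bound $p_k>(k-2)/(k-1)$ for free (from $t_2>\rho_2>k-2$), whereas you need the separate endpoint evaluation $D\bigl(\tfrac{k-2}{k-1}\bigr)>0$, which rests on the inequality $2(k-2)^{k-1}>k^2-2k-1$ that you assert without proof (it is easy: $(k-2)^{k-1}\geq(k-2)^3$ and $2(k-2)^3-k^2+2k+1$ is positive at $k=4$ and increasing); conversely, your quadratic-sign argument is shorter than the paper's nested derivative analysis, and your symmetric-function factorization $\Phi(p)=(1-2p)R_k(pq)$ actually derives the stated values $p_4$ and $p_6$, which the paper only asserts. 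One small imprecision to fix: the factor $q-kp$ by which you divide is not globally nonvanishing (it vanishes at $p=1/(k+1)$), but since that point is not a root of $\Phi$ and lies outside $(1/2,k/(k+1))$ where the division is used, your argument is unaffected.
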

 \begin{proof}
 Since $\mu=p$ and
 \begin{equation}
 \label{mu_k}
 \mu_k=p(1-p)\left[(1-p)^{k-1}+(-1)^k p^{k-1}\right],
 \quad k=1,2,\ldots,
 \end{equation}
 \eqref{singular} shows that $X\in \CF_k$ if and only
 if
 \begin{equation}
 \label{eq.sing}
 (x-p)^k=\mu_k+k\mu_{k-1}(x-p) \quad\textrm{for } x=0  \textrm{ and } x=1.
 \end{equation}
 Using \eqref{mu_k} and the fact that $k\geq 2$ is even,
 both equations in \eqref{eq.sing} are reduced to
 \begin{equation}
 \label{eq.even2}
 p^{k-1} [k-(k+1)p] = (1-p)^{k-1} [(k+1)p-1], \quad 0<p<1.
 \end{equation}
 Since the value of $p=k/(k+1)$ is a root of the lhs of
 \eqref{eq.even2} which is not a root of its rhs we conclude that
 \eqref{eq.even} and \eqref{eq.even2} are equivalent.
 Obviously, $p=1/2$ is a root of \eqref{eq.even2}.
 In order to find all roots of \eqref{eq.even2},
 we make the substitution $t={p}/{(1-p)}$,
 which monotonically maps $p\in(0,1)$
 to $t\in(0,\infty)$.
 Then, we get the equation
 \begin{equation}
 \label{eq.even3}
 p_k(t)\doteq t^k-k t^{k-1}+k t-1=0, \quad 0<t<\infty,
 \end{equation}
 which has the obvious solution $t=1$ (corresponding to $p=1/2$).
 If $k=2$, then \eqref{eq.even3} is written as $t^2-1=0$, and
 thus $t=1$ (resp.\ $p=1/2$) is the unique solution
 of \eqref{eq.even3} (resp.\ \eqref{eq.even2}).
 Since for any $t>0$ we have $p_k(1/t)=-p_k(t)/t^k$, it
 follows that $1/t$ is a root of \eqref{eq.even3} whenever
 $t$ is; equivalently, $1-p$ is a root of \eqref{eq.even2}
 if $p$ is. For even $k\geq 4$ we see that $p_k(0)=-1$,
 $p_k(1)=0$ and $p_k(\infty)\doteq \lim_{t\to\infty}p_k(t)=\infty$. Also,
 $p_k'(t)=k[t^{k-1}-(k-1)t^{k-2}+1]=k q_k(t)$, where
 $q_k(t)=t^{k-1}-(k-1)t^{k-2}+1$  satisfies $q_k(0)=1>0$,
 $q_k(1)=-(k-3)<0$ and $q_k(\infty)=\infty$. Moreover, we
 see that $q_k'(t)=(k-1)t^{k-3}[t-(k-2)]$ is negative for
 $t\in(0,k-2)$ and positive for $t\in(k-2,\infty)$; thus,
 $q_k(t)$ decreases in $(0,k-2)$ and increases in
 $(k-2,\infty)$.
 Therefore, there exist $\rho_1<\rho_2$, with $0<\rho_1<1<k-2<\rho_2<\infty$,
 such that $q_k(t)<0$ for $t$ in $(0,\rho_1)\cup (\rho_2,\infty)$
 and $q_k(t)>0$ for $t$ in $(\rho_1,\rho_2)$.
 Relation $q_k(t)=p_k'(t)/k$ shows that $p_k(t)$
 is increasing in $(0,\rho_1)$, decreasing in $(\rho_1,\rho_2)$ and
 increasing in $(\rho_2,\infty)$. From
 $1\in(\rho_1,\rho_2)$ and $p_k(1)=0$, we conclude that
 $p_k(\rho_1)>0$ and $p_k(\rho_2)<0$; hence, there exist
 unique $t_1\in(0,\rho_1)$ and $t_2\in(\rho_2,\infty)$
 such that $p_k(t_1)=0=p_k(t_2)$.
 Clearly, $t_1=1/t_2$, and the set of roots of
 \eqref{eq.even3} is $\{1/t_2,1,t_2\}$; thus,
 the set of roots of \eqref{eq.even2} is $\{1-p_k,1/2,p_k\}$,
 with $p_k={t_2}/{(1+t_2)}$. Finally, relation $t_2>\rho_2>k-2$
 shows that
 $p_k={t_2}/{(1+t_2)}>{(k-2)}/{[1+(k-2)]}={(k-2)}/{(k-1)}$, while
 $p_k<{k}/{(k+1)}$ is obvious because for $p\geq {k}/{(k+1)}$
 the lhs of \eqref{eq.even2} is
 non-positive while its rhs is strictly positive.
 \hfill\qed
 \end{proof}

 From \eqref{eq.even}, we see that
 ${1}/{2}<p_4<p_6<\cdots$ and $p_{2k}=1-{1}/{(2k)}+o(1/k)$
 as $k\to\infty$. \Cref{lem.even} completely describes
 all $\CF_k$ for even $k$.
 \begin{corollary}
 \label{cor.even} If $k\geq 2$ is even, then
 $X\in\CF_k^0$ if and only if either
 $\Pr(X=\pm 1)={1}/{2}$,  or
 \[
 \Pr\left(X=-\sqrt{{p_k}/{(1-p_k)}}\right)=1-p_k,
 \qquad
 \Pr\left(X=\sqrt{{(1-p_k)}/{p_k}}\right)=p_k
 \]
 and $k\in\{4,6,\ldots\}$, or
 \[
 \Pr\left(X=-\sqrt{{(1-p_k)}/{p_k}}\right)=p_k,
 \qquad
 \Pr\left(X=\sqrt{{p_k}/{(1-p_k)}}\right)=1-p_k
 \]
 and $k\in\{4,6,\ldots\}$, where $p_k\in((k-2)/(k-1),k/(k+1))$ is given by \eqref{eq.even}.
 \end{corollary}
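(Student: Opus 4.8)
The plan is to deduce the corollary directly from \Cref{lem.even} by exploiting the location-scale structure of $\CF_k$. Fix an even $k\ge2$ and suppose $X\in\CF_k^0$, so that $X$ is standardized (mean $0$, variance $1$) and singular of order $k$. By \Cref{prop.singular}, $X$ is two-valued, say $\Pr(X=a)=1-p$ and $\Pr(X=b)=p$ with $a<b$ and $p\in(0,1)$. Writing $X=a+(b-a)B$ with $B\sim b(p)$ (here $b-a>0$ is the scale and $p$ is the probability attached to the larger atom), the location-scale invariance of $\CF_k$ noted in the text---namely that $Y\in\CF_k^0$ if and only if $\mu+\sigma Y\in\CF_k$ for some $\sigma>0$, so that $\CF_k$ is exactly the location-scale family generated by $\CF_k^0$---shows that $X\in\CF_k$ if and only if $B\in\CF_k$. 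Conversely, every standardized affine image of such a $B$ lies in $\CF_k^0$. Thus describing $\CF_k^0$ reduces to listing the admissible values of $p$ and then standardizing.

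By \Cref{lem.even}, $B\sim b(p)$ belongs to $\CF_k$ precisely when $p\in\{1-p_k,\tfrac12,p_k\}$, with the outer two values available only for $k\ge4$ (for $k=2$ only $p=\tfrac12$ survives). The unique standardized representative of the location-scale family of $b(p)$ is $(B-p)/\sqrt{p(1-p)}$, which assigns probability $1-p$ to the atom $-\sqrt{p/(1-p)}$ and probability $p$ to the atom $\sqrt{(1-p)/p}$. Substituting the three admissible values of $p$ then yields exactly the three displayed families: $p=\tfrac12$ gives $\Pr(X=\pm1)=\tfrac12$; $p=p_k$ gives $\Pr\big(X=-\sqrt{p_k/(1-p_k)}\big)=1-p_k$ together with $\Pr\big(X=\sqrt{(1-p_k)/p_k}\big)=p_k$; and $p=1-p_k$ gives $\Pr\big(X=-\sqrt{(1-p_k)/p_k}\big)=p_k$ together with $\Pr\big(X=\sqrt{p_k/(1-p_k)}\big)=1-p_k$.

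Finally I would record the easy consistency checks. Since $p_k\in((k-2)/(k-1),k/(k+1))$ and $(k-2)/(k-1)\ge 1/2$ for $k\ge3$, we have $1-p_k<\tfrac12<p_k$, so the three values of $p$ are distinct for $k\ge4$ and the two non-symmetric families are genuine reflections of one another; for $k=2$ they collapse and only the symmetric law remains, matching the statement. I do not expect a real obstacle here: the content is entirely carried by \Cref{lem.even}, and the only care needed is the bookkeeping of which atom is the larger one (hence carries probability $p$), so that the standardization $(B-p)/\sqrt{p(1-p)}$ is transcribed into the correct signs and probabilities in each of the three cases.
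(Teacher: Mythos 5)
Your proposal is correct and follows essentially the same route the paper intends: the paper offers no separate proof of \Cref{cor.even}, treating it as an immediate consequence of \Cref{lem.even} combined with \Cref{prop.singular} and the location-scale remark preceding \Cref{def.singular}, which is precisely your reduction to standardized Bernoulli laws $(B-p)/\sqrt{p(1-p)}$ with $p\in\{1-p_k,1/2,p_k\}$. The bookkeeping of atoms and probabilities in your three cases matches the statement exactly.
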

 \Cref{cor.even} says that $\CF_2^0$
 is singleton and that for every $k\in\{4,6,\ldots\}$,
 $\CF_k^0$ contains exactly three
 two-valued distributions.
 When $k$ is odd the nature of $\CF_k$
 is quite different, because it contains
 both two-valued and three-valued
 distributions.
 First we examine the two-valued case.

 \begin{lemma}
 \label{lem.odd} Let $X\sim b(p)$ for some
 $p\in(0,1)$.
 If $k\geq 3$ is odd
 and
 $X\in\CF_k$,
 then
 $p\in\{1-p_k,p_k\}$
 where
 $p_k$ is the unique root
 of the equation
 \begin{equation}
 \label{eq.odd}
 \left(\frac{p}{1-p}\right)^{k-1}=\frac{(k+1)p-1}{(k+1)p-k},
 \qquad
 \frac{k}{k+1}<p<1;
 \end{equation}
 in particular, $p_3={1}/{2}+{\surd{3}}/{6}$
 and $p_5={1}/{2}+{\sqrt{5\surd{5}}}/{10}$.
 Conversely, if $k\geq 3$ is odd and either $X\sim b(p_k)$ or $X\sim
 b(1-p_k)$,
 with $p_k$ as above,
 then $X\in\CF_k$.
 \end{lemma}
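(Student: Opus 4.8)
The plan is to mirror the structure of the proof of \Cref{lem.even}, adapting each step to odd $k$. Since $X\sim b(p)$ is supported on $\{0,1\}$, the singularity condition \eqref{singular} is equivalent to \eqref{eq.sing}, i.e.\ to the two scalar equations obtained by setting $x=0$ and $x=1$. First I would substitute the explicit expression \eqref{mu_k} for $\mu_k$ and $\mu_{k-1}$ into these two equations. Using that $k$ is odd (so $(-1)^k=-1$ and $(-1)^{k-1}=1$), a short computation should collapse \emph{both} equations to the single identity
\[
p^{k-1}\bigl[k-(k+1)p\bigr]=(1-p)^{k-1}\bigl[1-(k+1)p\bigr].
\]
Checking that $p=k/(k+1)$ makes the left side vanish but not the right (for $k\geq3$), I can divide and recover exactly \eqref{eq.odd}; this already shows that \eqref{eq.sing} and \eqref{eq.odd} cut out the same set of $p$-values, with no extraneous root introduced.

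Next I would pass to the variable $t=p/(1-p)$, which maps $(0,1)$ monotonically onto $(0,\infty)$. Clearing denominators turns the displayed identity into the polynomial equation $q_k(t)\doteq t^k-k t^{k-1}-k t+1=0$. Two structural facts drive the analysis: $q_k(-1)=0$ (so $t+1$ always divides $q_k$), and $q_k(1/t)=q_k(t)/t^k$, which pairs positive roots as $t\leftrightarrow 1/t$, i.e.\ $p\leftrightarrow 1-p$. The heart of the argument is a root count on $(0,\infty)$: writing $q_k'(t)=k\,r_k(t)$ with $r_k(t)=t^{k-1}-(k-1)t^{k-2}-1$, I would use $r_k'(t)=(k-1)t^{k-3}[t-(k-2)]$ to show $r_k$ decreases on $(0,k-2)$ and increases afterwards, with negative minimum $r_k(k-2)=-(k-2)^{k-2}-1<0$; hence $r_k$ has a single positive root $\rho>k-2$. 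Consequently $q_k$ decreases on $(0,\rho)$ and increases on $(\rho,\infty)$. Combining this unimodality with $q_k(0)=1>0$, $q_k(1)=2(1-k)<0$, and $q_k(\infty)=+\infty$ yields exactly two positive roots $t_1<1<t_2$, and the symmetry forces $t_1=1/t_2$. Note that, in contrast to the even case, $t=1$ (i.e.\ $p=1/2$) is no longer a root, so there is no middle solution here.

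To pin down the interval constraint, I would evaluate $q_k(k)=1-k^2<0$; since $q_k<0$ precisely on $(t_1,t_2)$, this gives $t_2>k$, hence $p_k\doteq t_2/(1+t_2)>k/(k+1)$, while the companion root satisfies $1-p_k=1/(1+t_2)<1/(k+1)$. Thus $p_k$ is the \emph{unique} solution of \eqref{eq.odd} in $(k/(k+1),1)$, and the full solution set of \eqref{eq.sing} is $\{1-p_k,p_k\}$, giving both the necessity and (reading the equivalences backwards) the sufficiency direction. Finally, the explicit values follow by factoring out $(t+1)$: for $k=3$ one is left with the quadratic $t^2-4t+1=0$, whose larger root $2+\surd3$ gives $p_3=1/2+\surd3/6$; for $k=5$ the residual palindromic quartic $t^4-6t^3+6t^2-6t+1=0$ reduces, via $s=t+1/t$, to $s^2-6s+4=0$, and tracing the admissible branch $s=3+\surd5$ back through $t_2$ produces $p_5=1/2+\sqrt{5\surd5}/10$.

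I expect the main obstacle to be the monotonicity and root-counting step for $q_k$: establishing that $r_k$ has exactly one positive root, so that $q_k$ is genuinely unimodal on $(0,\infty)$, is what guarantees there are precisely two admissible $p$-values rather than more, and it is the place where the oddness of $k$ and the precise coefficients really matter. The localization $t_2>k$ via the sign of $q_k(k)$ is the second delicate point, since it is exactly what certifies the interval bound $k/(k+1)<p_k<1$ asserted in \eqref{eq.odd}.
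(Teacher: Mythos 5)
Your proposal is correct and follows essentially the same route as the paper's proof: reduction of \eqref{singular} to the two-point condition \eqref{eq.sing}, collapse (for odd $k$) to the single identity \eqref{eq.odd2}, the substitution $t=p/(1-p)$, and the same unimodality root count for $t^k-kt^{k-1}-kt+1$ via the derivative factor $t^{k-1}-(k-1)t^{k-2}-1$. The one genuine difference is how the bound $p_k>k/(k+1)$ is obtained: you evaluate the polynomial at $t=k$, getting $1-k^2<0$ and hence $t_2>k$ in one line, whereas the paper only extracts $t_2>\rho>k-2$ from the monotonicity analysis and must then separately rule out the interval $((k-2)/(k-1),\,k/(k+1)]$ by comparing the signs of the two sides of \eqref{eq.odd2}; your localization is cleaner. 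Your derivations of $p_3$ and $p_5$ (factoring out $t+1$, then for $k=5$ reducing the palindromic quartic via $s=t+1/t$) are also correct and supply details the paper merely states.
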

 \begin{proof}
 Assume that $k\geq 3$ is odd, $X\sim b(p)$ and
 $X\in\CF_k$. This means that
 \eqref{eq.sing}
 is satisfied.
 Using \eqref{mu_k} and the fact that $k\geq 3$ is odd,
 both equations in \eqref{eq.sing} are reduced to
 \begin{equation}
 \label{eq.odd2}
 p^{k-1} [(k+1)p-k] = (1-p)^{k-1} [(k+1)p-1], \quad 0<p<1.
 \end{equation}
 Since the value of $p=k/(k+1)$ is a root of the lhs of
 \eqref{eq.odd2} which is not a root of its rhs, we conclude that
 \eqref{eq.odd} and \eqref{eq.odd2} are equivalent.
 As in \Cref{lem.even}, in order to find all
 roots of \eqref{eq.odd2} we make the substitution
 $t={p}/{(1-p)}$, which monotonically maps $p\in(0,1)$
 to $t\in(0,\infty)$.
 Then, we get the equation
 \begin{equation}
 \label{eq.odd3}
 p_k(t)\doteq t^k-k t^{k-1}-k t+1=0, \quad 0<t<\infty.
 \end{equation}
 Since for any $t>0$ we have $p_k(1/t)=p_k(t)/t^k$, it
 follows that $1/t$ is a root of \eqref{eq.odd3} whenever
 $t$ is; equivalently, $1-p$ is a root of \eqref{eq.odd2}
 if $p$ is. For odd $k\geq 3$, we see that $p_k(0)=1>0$,
 $p_k(1)=-2(k-1)<0$ and $p_k(\infty)=\infty$. Thus,
 \eqref{eq.odd3} has at least one root in $(0,1)$
 and at least one root in $(1,\infty)$.
 Also, $p_k'(t)=k[t^{k-1}-(k-1)t^{k-2}-1]=k q_k(t)$, where
 $q_k(t)=t^{k-1}-(k-1)t^{k-2}-1$  satisfies $q_k(0)=-1<0$
 and $q_k(\infty)=\infty$.
 Moreover, we see that $q_k'(t)=(k-1)t^{k-3}[t-(k-2)]$ is
 negative for $t\in(0,k-2)$ and positive for
 $t\in(k-2,\infty)$; thus,
 $q_k(t)$ decreases in $(0,k-2)$ and increases in
 $(k-2,\infty)$.
 Therefore, there exists a unique $\rho>k-2\geq 1$
 such that $q_k(t)<0$ for $t$ in $(0,\rho)$
 and $q_k(t)>0$ for $t$ in $(\rho,\infty)$.
 Relation $q_k(t)=p_k'(t)/k$ shows that $p_k(t)$
 decreases in $(0,\rho)$ and increases in $(\rho,\infty)$.
 From $p_k(0)>0$, $p_k(1)<0$ and $p_k(\infty)>0$
 we conclude that there exist unique $t_1\in(0,1)$ and
 $t_2\in(\rho,\infty)$ such that $p_k(t_1)=0=p_k(t_2)$.
 Clearly, $t_1=1/t_2$, and the set of roots of
 \eqref{eq.odd3} is $\{1/t_2,t_2\}$; thus,
 the set of roots of \eqref{eq.odd2} is $\{1-p_k,p_k\}$,
 with $p_k={t_2}/{(1+t_2)}$. Finally, relation
 $t_2>\rho>k-2$ shows that
 $p_k={t_2}/{(1+t_2)}>{(k-2)}/{[1+(k-2)]}={(k-2)}/{(k-1)}$.
 However, the root $p_k$ cannot lie in $({(k-2)}/{(k-1)},{k}/{(k+1)}]$
 because for all $p$ in this interval the lhs of \eqref{eq.odd2}
 is non-positive, while its rhs is strictly positive
 ($p>{(k-2)}/{(k-1)}$ implies $(k+1)p-1>(k+1){(k-2)}/{(k-1)}-1
 ={[(k-3)(k+1)+2]}/{(k-1)}>0$,
 since $k\geq 3$). This verifies that $p_k>{k}/{(k+1)}$.
 Finally, if either $X\sim b(p_k)$ or $X\sim b(1-p_k)$, then
 \eqref{mu_k} and \eqref{eq.odd2} show that \eqref{eq.sing}
 and \eqref{singular} are satisfied and, thus, $X\in\CF_k$.
 \hfill\qed
 \end{proof}
 \begin{corollary}
 \label{cor.odd} If $k\geq 3$ is odd, then the
 unique two-valued random variables contained in
 $\CF_k^0$ are the following:
 \[
 \Pr\left(X=-\sqrt{{p_k}/{(1-p_k)}}\right)=1-p_k,
 \qquad
 \Pr\left(X=\sqrt{{(1-p_k)}/{p_k}}\right)=p_k
 \]
 and
 \[
 \Pr\left(X=-\sqrt{{(1-p_k)}/{p_k}}\right)=p_k,
 \qquad
 \Pr\left(X=\sqrt{{p_k}/{(1-p_k)}}\right)=1-p_k,
 \]
 where $p_k\in({k}/{(k+1)},1)$ is given by \eqref{eq.odd}.
 \end{corollary}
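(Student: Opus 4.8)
The plan is to deduce this directly from \Cref{lem.odd} by exploiting the location-scale invariance of $\CF_k$ noted just after \Cref{def.singular}. First I would recall that every non-degenerate two-valued random variable $X$ taking values $c_1<c_2$ with $\Pr(X=c_1)=1-p$ and $\Pr(X=c_2)=p$ can be written as $X=c_1+(c_2-c_1)Y$ with $Y\sim b(p)$; that is, $X$ is a positive location-scale transform of $b(p)$. Since $\CF_k$ is precisely the location-scale family of $\CF_k^0$, we have $X\in\CF_k$ if and only if $Y\in\CF_k$, and \Cref{lem.odd} then forces $p\in\{1-p_k,p_k\}$. Thus the two-valued members of $\CF_k$ are exactly the positive location-scale transforms of $b(p_k)$ and $b(1-p_k)$.

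Next I would standardize. Because each positive location-scale family contains a unique element with mean $0$ and variance $1$, it suffices to standardize $b(p_k)$ and $b(1-p_k)$. For $Y\sim b(p)$ one has $\E(Y)=p$ and $\Var(Y)=p(1-p)$, so the standardized variable $Z=(Y-p)/\sqrt{p(1-p)}$ takes the value $\sqrt{(1-p)/p}$ with probability $p$ (when $Y=1$) and the value $-\sqrt{p/(1-p)}$ with probability $1-p$ (when $Y=0$). One checks directly that $\E(Z)=\sqrt{p(1-p)}-\sqrt{p(1-p)}=0$ and $\E(Z^2)=(1-p)+p=1$. Substituting $p=p_k$ yields the first displayed distribution, and substituting $p=1-p_k$ yields the second.

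Finally, I would verify that these two distributions are genuinely distinct and that no others occur. Since $p_k>k/(k+1)>1/2$ for $k\geq3$, the standardization of $b(p_k)$ places its higher probability $p_k$ on the larger (positive) mass point, whereas the standardization of $b(1-p_k)$ places its higher probability $p_k$ on the smaller (negative) point; under positive scaling the ordered probability assignment (smaller value $\leftrightarrow 1-p$, larger value $\leftrightarrow p$) is preserved, so the two cannot coincide. Exhaustiveness and uniqueness then follow, because for each of the two admissible values of $p$ the standardized member of its location-scale family is unique, giving exactly two two-valued elements of $\CF_k^0$.

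I do not expect any substantive obstacle here: granted \Cref{lem.odd}, the corollary is essentially a change-of-variables bookkeeping exercise. The only point requiring a little care is the distinctness of the two standardizations, which hinges on $p_k\neq1/2$ (equivalently $p_k>k/(k+1)$, already established in \Cref{lem.odd}). This asymmetry is exactly what makes $\CF_k^0$ contain two two-valued distributions in the odd case, rather than a single symmetric one together with a skewed pair as in the even case treated in \Cref{cor.even}.
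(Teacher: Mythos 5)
Your proposal is correct and matches the paper's (implicit) argument: the paper states \Cref{cor.odd} without a separate proof precisely because it follows from \Cref{lem.odd} by the standardization bookkeeping you carry out, exactly as \Cref{cor.even} follows from \Cref{lem.even}. Your explicit verification of the standardized values $\sqrt{(1-p)/p}$ and $-\sqrt{p/(1-p)}$, and of the distinctness of the two resulting distributions via $p_k>k/(k+1)>1/2$, is the intended content.
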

 \Cref{cor.odd} describes all two-valued random
 variables of $\CF_k^0$ when $k\geq 3$ is odd;
 however, we have already seen that
 $\CF_k^0$ contains also some
 three-valued random variables.
 Among them, exactly one is symmetric.
 \begin{lemma}
 \label{lem.sym}
 If $k\geq 3$ is odd, the unique symmetric random
 variable of $\CF_k^0$ is given by
 \[
 \Pr(X=\pm \surd{k})=1/(2k),
 \qquad
 \Pr(X=0)=1-1/k.
 \]
 More generally, this is the unique random variable of
 $\CF_k^0$ with $\mu_k=0$.
 \end{lemma}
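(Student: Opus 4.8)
The plan is to establish the stronger ``more generally'' assertion first---that the displayed law is the unique member of $\CF_k^0$ with $\mu_k=0$---and then deduce the symmetric statement as a corollary. This reduction is immediate: a random variable that is symmetric about its mean $\mu=0$ has all odd central moments equal to zero, so in particular $\mu_k=0$ for our odd $k$; hence every symmetric member of $\CF_k^0$ is among those with $\mu_k=0$, and uniqueness in the larger class forces uniqueness in the smaller one. What remains is to check that the stated three-valued law does belong to $\CF_k^0$---a direct computation showing it has mean $0$, variance $k\cdot\tfrac{1}{2k}\cdot 2=1$, vanishing $\mu_k$ by symmetry, and that it satisfies \eqref{singular} at each of its three atoms---and to prove the uniqueness.

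For the uniqueness, fix $X\in\CF_k^0$ with $\mu_k=0$, so that $\mu=0$ and $\sigma^2=1$. Since $k-1$ is even and $X$ is non-degenerate (indeed $\sigma^2=1$), we have $\mu_{k-1}=\E(X^{k-1})>0$. Substituting $\mu=\mu_k=0$ into the defining relation \eqref{singular} gives $X^k=k\mu_{k-1}X$ with probability one, i.e.\ $X\bigl(X^{k-1}-k\mu_{k-1}\bigr)=0$ almost surely. As $k-1$ is even and $k\mu_{k-1}>0$, the factor $X^{k-1}-k\mu_{k-1}$ vanishes precisely at $X=\pm a$, where $a\doteq(k\mu_{k-1})^{1/(k-1)}>0$; consequently the support of $X$ is contained in the symmetric three-point set $\{-a,0,a\}$, consistent with \Cref{prop.singular}. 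Thus the $\mu_k=0$ hypothesis alone already forces the symmetric support, without invoking symmetry of the law as an assumption.

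It then remains to determine the masses. Writing $q_{\pm}=\Pr(X=\pm a)$, the mean-zero condition $a(q_+-q_-)=0$ forces $q_+=q_-\doteq q$, so $X$ is automatically symmetric, and non-degeneracy gives $q>0$. The variance constraint reads $2qa^2=1$, while, because $k-1$ is even, $\mu_{k-1}=2q\,a^{k-1}$. The only mildly delicate point---the main thing to get right---is that $a$ was defined through $\mu_{k-1}$, which in turn depends on $a$; rather than treating $a$ as free, I resolve this coupling by substituting $\mu_{k-1}=2q\,a^{k-1}$ into $a^{k-1}=k\mu_{k-1}$, obtaining $a^{k-1}=2kq\,a^{k-1}$ and hence $q=1/(2k)$, after which $2qa^2=1$ yields $a^2=k$. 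Therefore $q=1/(2k)<1/2$ and $a=\surd{k}$, which is exactly the stated law; in particular it is genuinely three-valued, so the two-valued members of \Cref{cor.odd} are thereby excluded (they must all have $\mu_k\neq0$). This establishes the ``more generally'' claim, and with it the uniqueness of the symmetric member of $\CF_k^0$.
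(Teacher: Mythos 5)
Your proof is correct and takes essentially the same route as the paper: work under the assumption $\mu_k=0$ alone, use \eqref{singular} to force the support into $\{-a,0,a\}$ with $a=(k\mu_{k-1})^{1/(k-1)}$ and $\mu_{k-1}>0$, use $\E(X)=0$ to equate the masses at $\pm a$, and then pin down the mass and $a=\surd{k}$ from the moment constraints. Your step $a^{k-1}=2kq\,a^{k-1}\Rightarrow q=1/(2k)$ is exactly the paper's computation $\mu_{k-1}=(1-p)k\mu_{k-1}\Rightarrow p=1-1/k$, and both arguments close with the same verification that the resulting law indeed satisfies \eqref{singular}.
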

 \begin{proof}
 Since $X\in\CF_k^0$, we have $\E(X)=0$ and $\E(X^2)=1$.
 Therefore, in view of the assumption $\mu_k=0$,
 \eqref{singular} simplifies to
 \[
 X \left(X^{k-1}-k\mu_{k-1}\right)=0 \quad \textrm{with probability one.}
 \]
 It follows that the support of $X$ is a subset
 of $A\doteq\{-(k\mu_{k-1})^{{1}/{(k-1)}},0,(k\mu_{k-1})^{{1}/{(k-1)}}\}$,
 where $\mu_{k-1}=\E(X^{k-1})>0$, because $X$ is non-degenerate and
 $k-1$ is even. Let $p=\Pr(X=0)$, $p_1=\Pr(X=-(k\mu_{k-1})^{{1}/{(k-1)}})$
 and $p_2=\Pr(X=(k\mu_{k-1})^{{1}/{(k-1)}})$; $p$, $p_1$, $p_2$
 are non-negative and $p+p_1+p_2=1$ because
 $\Pr(X\in A)=1$. Assumption $\E(X)=0$ shows that $p_1=p_2$. Thus,
 $p_1=p_2={(1-p)}/{2}$ and, so,
 $\Pr(X=\pm (k\mu_{k-1})^{{1}/{(k-1)}})={(1-p)}/{2}$.
 Calculating $\mu_{k-1}=\E(X^{k-1})=(1-p)k\mu_{k-1}$, we see that
 $p=1-{1}/{k}$ and thus, $\Pr(X=\pm a)={1}/{(2k)}$
 where $a=(k\mu_{k-1})^{{1}/{(k-1)}}>0$.
 Finally, from $1=\E(X^2)={a^2}/{k}$, we conclude that
 $a=\surd{k}$. On the other hand, it is easily seen that
 for this value of $a=\surd{k}$, $\mu_k=0$ and
 $\mu_{k-1}=k^{(k-3)/2}$ so that
 $k\mu_{k-1}=k^{(k-1)/2}=(\pm\surd{k})^{k-1}$; hence,
 $A=\{-\surd{k},0,\surd{k}\}$  and
 $x(x^{k-1}-k\mu_{k-1})=x[x^{k-1}-(\pm\surd{k})^{k-1}]\equiv 0$
 for all $x\in A$.
 \hfill\qed
 \end{proof}

 The following lemma presents a complete description
 of all tree-valued distributions of $\CF_3^0$
 and gives a picture of the nature of
 $\CF_k^0$ when $k\geq 3$ is odd.
 \begin{lemma}
 \label{lem.F_3}
 For each $\mu_3\in[-\surd{2},\surd{2}]$
 there exists a unique random variable $X\in \CF_3^0$
 such that $\E(X^3)=\mu_3$.
 Cases $\mu_3=\pm\surd{2}$ correspond
 to the two-valued distributions described in \Cref{cor.odd}
 for $k=3$. Any other value
 of $\mu_3\in(-\surd{2},\surd{2})$ uniquely determines
 a three-valued distribution, and in particular,
 $\mu_3=0$ corresponds to the symmetric distribution of
 \Cref{lem.sym} for $k=3$. Moreover, there not exist
 other random variables in $\CF_3^0$.
 Therefore, $\CF_3^0$ admits the parametrization
 \[
 \CF_3^0=\{X_\theta, \ -\surd{2}\leq
 \theta\leq \surd{2}\},
 \]
 where $X_{\theta}$ is characterized by
 \[
 \E\left(X_{\theta}\right)=0, \quad \E\left(X_{\theta}^2\right)=1, \quad \E\left(X_\theta^3\right)=\theta
 \quad \textrm{and} \quad \Pr\left[X_{\theta}\left(X_{\theta}^2-3\right)=\theta\right]=1.
 \]
 \end{lemma}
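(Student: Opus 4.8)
The plan is to apply the singularity identity \eqref{singular} directly. For a standardized $X\in\CF_3^0$ one has $\mu=0$ and $\mu_2=\sigma^2=1$, so \eqref{singular} with $k=3$ becomes
\[
X(X^2-3)=\mu_3 \quad \textrm{with probability one};
\]
hence the support of $X$ is contained in the real roots of the cubic $p(x)=x^3-3x-\mu_3$. First I would analyze $p$: since $p'(x)=3(x^2-1)$ produces a local maximum $p(-1)=2-\mu_3$ and a local minimum $p(1)=-2-\mu_3$, for $|\mu_3|<2$ there are three distinct real roots $r_1<r_2<r_3$, all in $[-2,2]$. I would record them trigonometrically via $r=2\cos\theta$, $\cos(3\theta)=\mu_3/2$: writing $\mu_3=2\cos\phi$ with $\phi\in[0,\pi]$ and $\psi=\phi/3$, the ordered roots are $r_3=2\cos\psi$, $r_2=2\cos(\psi-2\pi/3)$ and $r_1=2\cos(\psi+2\pi/3)$.

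Since $X$ is supported on three distinct points, the constraints $p_1+p_2+p_3=1$, $\E(X)=0$ and $\E(X^2)=1$ form a Vandermonde system with a unique solution $(p_1,p_2,p_3)$, which admits a clean closed form: with $\{i,j,l\}=\{1,2,3\}$ and $Q(x)=(x-r_j)(x-r_l)$, the identity $\E[Q(X)]=p_iQ(r_i)$ together with $\E[Q(X)]=\E(X^2)-(r_j+r_l)\E(X)+r_jr_l=1+r_jr_l$ gives
\[
p_i=\frac{1+r_jr_l}{(r_i-r_j)(r_i-r_l)}.
\]
I would also note that $\E(X^3)=\mu_3$ holds automatically: each root satisfies $r_i^3=3r_i+\mu_3$, so $\E(X^3)=3\E(X)+\mu_3=\mu_3$, and the third moment need not be imposed separately.

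The crux is the positivity $p_i\geq0$, and I expect this sign bookkeeping to be the main obstacle. Using Vieta's relations $\sum r_i=0$, $\sum_{i<j}r_ir_j=-3$ and the product-to-sum formulas, I would evaluate $1+r_2r_3=2\cos(2\psi-2\pi/3)$, $1+r_1r_3=2\cos(2\psi+2\pi/3)$ and $1+r_1r_2=2\cos2\psi$. The denominators are positive for $i=1,3$ and negative for $i=2$, so $p_1,p_2,p_3\geq0$ is equivalent to $\cos(2\psi-2\pi/3)\geq0$, $\cos(2\psi+2\pi/3)\leq0$ and $\cos2\psi\geq0$. As $\psi$ runs over $[0,\pi/3]$ the middle inequality holds automatically, while the other two reduce to $\psi\in[\pi/12,\pi/4]$, that is $\phi\in[\pi/4,3\pi/4]$, i.e.\ $\mu_3=2\cos\phi\in[-\surd{2},\surd{2}]$.

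It remains to read off the structure and complete the argument. For $\mu_3\in(-\surd{2},\surd{2})$ all three $p_i$ are strictly positive, yielding a genuine three-valued law; at $\mu_3=\surd{2}$ (so $\psi=\pi/12$) the factor $1+r_2r_3$ vanishes, forcing $p_1=0$ and a two-valued distribution, and symmetrically $\mu_3=-\surd{2}$ forces $p_3=0$. A direct check identifies these boundary laws with the two-valued members of $\CF_3^0$ from \Cref{cor.odd} for $k=3$, and the value $\mu_3=0$ (support $\{-\surd{3},0,\surd{3}\}$, $p_1=p_3=1/6$, $p_2=2/3$) with the symmetric law of \Cref{lem.sym} for $k=3$. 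Uniqueness is immediate from the unique solvability of the moment system, and completeness follows because \eqref{singular} confines every $X\in\CF_3^0$ to the roots of $x^3-3x-\E(X^3)$, while feasibility of the probabilities forces $\E(X^3)\in[-\surd{2},\surd{2}]$; outside this range (e.g.\ $|\mu_3|\geq2$) no non-degenerate standardized singular law exists. Setting $\theta=\mu_3$ then yields the stated parametrization $\CF_3^0=\{X_\theta\colon-\surd{2}\leq\theta\leq\surd{2}\}$ with $X_\theta$ pinned down by $\E(X_\theta)=0$, $\E(X_\theta^2)=1$, $\E(X_\theta^3)=\theta$ and $\Pr[X_\theta(X_\theta^2-3)=\theta]=1$.
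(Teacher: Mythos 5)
Your proof is correct, but it follows a genuinely different route from the paper's. The paper works purely algebraically: writing the roots of $y(y^2-3)=\theta$ (for $-2<\theta<0$) as $-\alpha,\beta,\gamma$, it derives the key identity $\alpha=\beta+\gamma$ from symmetric-function manipulations, solves the $3\times3$ moment system explicitly, reads off positivity as $\gamma\geq\surd{2}$, and then disposes of the case $\theta>0$ by the reflection $X\mapsto -X$. You instead parametrize the roots trigonometrically, $r=2\cos(\cdot)$ with $\cos(3\psi)=\mu_3/2$, obtain the probabilities in one stroke via the Lagrange-interpolation identity $p_i=(1+r_jr_l)/[(r_i-r_j)(r_i-r_l)]$, and reduce positivity to the cosine inequalities $\cos(2\psi)\geq0$, $\cos(2\psi-2\pi/3)\geq0$, which give $\psi\in[\pi/12,\pi/4]$, i.e.\ $\mu_3\in[-\surd{2},\surd{2}]$ directly, with both signs of $\mu_3$ handled simultaneously and no symmetry reduction needed; your observation that $\E(X^3)=3\E(X)+\mu_3=\mu_3$ holds automatically is also a nice economy absent from the paper. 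The trade-off: your argument is more unified and the positivity bookkeeping is cleaner, while the paper's is more elementary (no trigonometric substitution) and is more explicit about the degenerate boundary $|\mu_3|\geq 2$. That is the one spot where you are slightly terse: for $\surd{2}<|\mu_3|<2$ your positivity analysis does the work, and for $|\mu_3|>2$ the root set is a singleton, but at $|\mu_3|=2$ the cubic has a double root, your Vandermonde formulas degenerate, and one needs the separate one-line check (as in the paper) that the two-point support $\{\mp1,\pm2\}$ forces $\E(X^2)=2\neq1$; you should state that check rather than subsume it under ``e.g.\ $|\mu_3|\geq2$''.
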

 \begin{proof}
 Let $X\in\CF_3^0$ and assume that $\mu_3=\E(X^3)=\theta$.
 Then, $\mu=\E(X)=0$, $\sigma^2=\mu_2=\E(X^2)=1$ and, according to \eqref{singular},
 $X(X^2-3)=\theta$ with probability one. Therefore, since $X$ is non-degenerate,
 the support of $X$ must contains at least two points which
 are included in the set of zeros of $y(y^2-3)=\theta$.
 This shows that $|\theta|\leq 2$ because, otherwise, the
 set $\{y\in\R\colon y(y^2-3)=\theta\}$ is a singleton.
 Observe that $\theta=0$ leads, uniquely,
 to the symmetric random variable of \Cref{lem.sym}
 with $k=3$. Thus, from now on assume that $\theta\neq 0$.
 The values $\theta=\pm 2$ are impossible because
 the equations $y(y^2-3)=\pm 2$ have exactly two real
 solutions, say $\alpha,\beta$, with $|\alpha|=1$  and
 $|\beta|=2$, so that $\E(X)=0$ and $\E(X^2)=1$ are impossible.

 Consider now the case where $-2<\theta<0$. Then,
 $\{y\colon y(y^2-3)=\theta\}=\{-\alpha,\beta,\gamma\}$
 where $0<\beta<1<\gamma<\surd{3}<\alpha$
 and, by definition, the numbers
 $\alpha$, $\beta$, $\gamma$ satisfy the relation
 \begin{equation}
 \label{abc}
 -\alpha\left(\alpha^2-3\right) = \beta\left(\beta^2-3\right)=\gamma\left(\gamma^2-3\right)
 =\theta.
 \end{equation}
 From \eqref{abc}, we see that
 $\theta=\theta(\beta)=\beta(\beta^2-3)$ is a strictly
 decreasing and continuous function of $\beta$ which maps
 $\beta\in(0,1)$ to $\theta\in(-2,0)$; thus, its inverse function,
 $\beta(\theta)\colon(-2,0)\to(0,1)$,
 is well-defined, continuous and strictly
 decreasing in $\theta$ with
 $\beta(-2_+)=1$ and $\beta(0_-)=0$.
 Also, from \eqref{abc} we get the equation
 $3(\alpha+\beta)=\alpha^3+\beta^3
 =(\alpha+\beta)(\alpha^2-\alpha\beta+\beta^2)$
 which shows that $\alpha^2-\alpha\beta+\beta^2=3$ and,
 since $\alpha>{\beta}/{2}$, we have
 \begin{equation}
 \label{alpha}
 \alpha=\alpha(\beta)=\frac{1}{2}(\beta+\delta),
 \quad
 \textrm{where}
 \
 \delta=\delta(\beta)=\sqrt{3\left(4-\beta^2\right)}.
 \end{equation}
 Similarly,
 \eqref{abc} yields the equation
 $3(\gamma-\beta)=\gamma^3-\beta^3
 =(\gamma-\beta)(\gamma^2+\gamma\beta+\beta^2)$
 which shows, in view of $\beta<\gamma$,
 that $\gamma^2+\gamma\beta+\beta^2=3$.
 Since $\gamma>0$ it follows that
 \begin{equation}
 \label{gamma}
 \gamma=\gamma(\beta)=\frac{1}{2}(-\beta+\delta),
 \quad
 \textrm{where}
 \
 \delta=\delta(\beta)=\sqrt{3\left(4-\beta^2\right)}.
 \end{equation}
 From \eqref{alpha} and \eqref{gamma} we conclude that
 $\alpha=\beta+\gamma$. Set now $p_1=\Pr(X=-\alpha)$,
 $p_2=\Pr(X=\beta)$ and $p_3=\Pr(X=\gamma)$.
 Since $\Pr(X\in\{-\alpha,\beta,\gamma\})=1$
 and $\E(X)=0$, $\E(X^2)=1$, we get the system of
 equations (in $p_1$, $p_2$, $p_3$)
 \[
 p_1+ p_2 + p_3=1,
 \quad
 -\alpha p_1+\beta p_2 +\gamma p_3 =0,
 \quad
 \alpha^2 p_1+\beta^2 p_2 +\gamma^2 p_3 = 1,
 \]
 which, in view of $\alpha=\beta+\gamma$,
 has the unique solution
 \[
 p_1=\frac{1+\beta\gamma}{(\beta+2\gamma)(2\beta+\gamma)},
 \quad
 p_2=\frac{\gamma(\beta+\gamma)-1}{(\gamma-\beta)(2\beta+\gamma)},
 \quad
 p_3=\frac{1-\beta(\beta+\gamma)}{(\gamma-\beta)(\beta+2\gamma)}.
 \]
 Now, since $\gamma^2+\gamma\beta+\beta^2=3$, we have $\gamma(\beta+\gamma)=3-\beta^2$
 and $\beta(\beta+\gamma)=3-\gamma^2$; substituting these
 values in the numerators of $p_2$ and $p_3$ we get
 \begin{equation}
 \label{solution}
 p_1=\frac{1+\beta\gamma}{(\beta+2\gamma)(2\beta+\gamma)},
 \quad
 p_2=\frac{2-\beta^2}{(\gamma-\beta)(2\beta+\gamma)},
 \quad
 p_3=\frac{\gamma^2-2}{(\gamma-\beta)(\beta+2\gamma)}.
 \end{equation}
 It is clear that $p_1>0$ and $p_2>0$ for all values of
 $\beta$ and $\gamma$ with (see \eqref{gamma})
 \[
 0<\beta<1<\gamma=\frac{-\beta+\sqrt{3\left(4-\beta^2\right)}}{2}<\surd{3}.
 \]
 However, this is not the case for $p_3$, since $p_3\geq 0$
 requires $\gamma^2\geq 2$, i.e., $\gamma\geq \surd{2}$
 (since $\gamma>0$).
 Now, from $\mu_3=\theta=\gamma(\gamma^2-3)$ and the fact
 that $\gamma\in[\surd{2},\surd{3})$, we conclude that all
 possible values of $\theta$ (with $\theta<0$) are
 included in the interval $[-\surd{2},0)$.
 Using \eqref{gamma} and
 the fact that $\beta>0$, it follows that
 $\gamma\geq \surd{2}$ if and only if
 $0<\beta\leq {(\surd{6}-\surd{2})}/{2}=\sqrt{2-\surd{3}}$.
 Now, observe that $\gamma=\surd{2}$ corresponds to
 a standardized two-valued random variable with
 $\mu_3=\theta=\gamma(\gamma^2-3)=-\surd{2}$,
 taking the values $-\alpha=-\beta-\gamma=-{(\surd{6}+\surd{2})}/{2}=-\sqrt{2+\surd{3}}$
 and $\beta=\sqrt{2-\surd{3}}={(\surd{6}-\surd{2})}/{2}$ with respective
 probabilities $1-p$ and $p$,
 where
 \[
 p=\frac{2-\beta^2}{(\gamma-\beta)(2\beta+\gamma)}
 =\frac{1}{2}+\frac{\surd{3}}{6};
 \]
 this is the first two-valued random variable of \Cref{cor.odd}
 when $k=3$. On the other hand, each value of
 $\gamma\in(\surd{2},\surd{3})$ corresponds to
 a unique value of
 $\mu_3=\theta=\gamma(\gamma^2-3)\in(-\surd{2},0)$,
 which, in turn, uniquely determines
 $\beta=\beta(\theta)\in(0,{(\surd{6}-\surd{2})}/{2})$
 through $\beta=[-\gamma+\sqrt{3(4-\gamma^2)}]/2$
 (cf.\ \eqref{gamma}),
 and $\alpha=\alpha(\theta)$ through $\alpha=\beta+\gamma$.
 Finally, these uniquely determined values of $\alpha$, $\beta$
 and $\gamma$ specify the (strictly positive)
 probabilities $p_1$, $p_2$ and $p_3$, through
 \eqref{solution}, which shows that each $X_{\theta}\in\CF_{3}^0$ is
 uniquely determined by $\E(X_{\theta}^3)=\theta$,
 $-\surd{2}<\theta<0$.

 It remains to examine the cases where $0<\theta<2$.
 However, if $X\in \CF_{3}^0$ and $\E(X^3)=\theta>0$,
 then it is easily verified that $-X\in \CF_{3}^0$ and
 $\E(-X)^3=-\theta<0$. By the previous arguments it follows
 that, necessarily, $-\surd{2}\leq -\theta<0$, that $-X$ is determined
 by the value of $-\theta$, and that $-X$ is a two-valued
 random variable, if $-\theta=-\surd{2}$, and a three-valued
 random variable otherwise; thus, the same is true for $X$,
 and the proof is complete.
 \hfill\qed
 \end{proof}

 We was not able to completely describe
 $\CF_k^0$ for odd $k\geq 5$. However,
 the situation seems to be similar
 to the case $k=3$, i.e., each $X_{\theta}\in\CF_k^0$
 is characterized by its central moment,
 $\theta=\E(X^k)=\mu_k$, and the possible values of $\theta$
 form a symmetric interval of the form
 $[-\alpha_k,\alpha_k]$, where the boundary values $\theta=\pm \alpha_k$
 correspond to the two-valued distributions of
 \Cref{cor.odd}, while every $\theta\in(-\alpha_k,\alpha_k)$
 determines uniquely a three-valued random variable.

 \section{Limiting distribution under singularness}
 \label{sec.Lim_distr_under_sing}

 If the random sample comes from a singular distribution of order $k\geq 2$,
 then the asymptotic normality of \eqref{limit2} reduces to
 $\surd{n}(M_{k,n}-\mu_k)\prob 0$
 (see \Cref{def.singular}).
 This shows that the order of
 convergence of $M_{k,n}$ to $\mu_k$
 is faster than $o(1/\surd{n})$, and
 a second order approximation applies, according
 to the following lemma.
 \begin{lemma}
 \label{lem.delta2}
 Assume that $\bX_n$ is a sequence of $k$-variate random vectors
 such that
 \begin{equation}
 \label{lim}
 \surd{n}(\bX_n-\bmu)\distr \bW,
 \end{equation}
 where $\bmu\in\R^k$ and $\bW$ is a
 $k$-variate random vector.
 Suppose that the Borel function
 $g\colon\R^k\to\R$ is twice continuously differentiable
 at a neighborhood of $\bmu$
 and define
 \[
 \nabla g(\bmu) =
 \left.\left(\frac{\partial g(\bx)}{\partial
 x_i}\right)\right|_{\bx=\bmu}\in \R^{k}
 \quad \textrm{and} \quad
 \BH_k(\bmu)\doteq
 \left.\left(\frac{\partial^2
 g(\bx)}{\partial x_i \partial x_j}\right)\right|_{\bx=\bmu}\in\R^{k\times
 k}.
 \]
 If
 \begin{equation}
 \label{sing.var}
 n [\nabla g(\bmu)]' (\bX_n-\bmu)\prob 0,
 \end{equation}
 then
 \begin{equation}
 \label{delta2}
 n[g(\bX_n)-g(\bmu)]\distr
 \frac{1}{2} \bW'\BH_k(\bmu)\bW.
 \end{equation}
 \end{lemma}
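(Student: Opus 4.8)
The plan is to apply a second-order Taylor expansion of $g$ about $\bmu$ and to show that the quadratic term governs the limit, the linear term being annihilated by hypothesis \eqref{sing.var}.

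First I would fix an open ball $B$ centred at $\bmu$ on which $g$ is twice continuously differentiable, and invoke Taylor's theorem with the Lagrange form of the remainder: for each $\bx\in B$ there is a point $\widetilde{\bx}$ on the segment joining $\bmu$ and $\bx$ with
\[
g(\bx)=g(\bmu)+[\nabla g(\bmu)]'(\bx-\bmu)+\tfrac12(\bx-\bmu)'\BH_k(\widetilde{\bx})(\bx-\bmu).
\]
Writing $\bm{v}_n\doteq\surd{n}(\bX_n-\bmu)$, the convergence \eqref{lim} makes $\bm{v}_n$ bounded in probability, so $\bX_n-\bmu=\bm{v}_n/\surd{n}\prob\bm{0}_k$ and therefore $\Pr(\bX_n\in B)\to1$. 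Hence, on an event $A_n$ with $\Pr(A_n)\to1$, I may apply the expansion at $\bx=\bX_n$, obtaining a random intermediate point $\widetilde{\bX}_n$ between $\bmu$ and $\bX_n$; multiplying by $n$ then gives
\[
n[g(\bX_n)-g(\bmu)]=n[\nabla g(\bmu)]'(\bX_n-\bmu)+\tfrac12\,\bm{v}_n'\BH_k(\widetilde{\bX}_n)\bm{v}_n.
\]
By \eqref{sing.var} the first summand is $o_p(1)$, so by Slutsky's theorem it remains to identify the weak limit of the quadratic term.

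Next I would argue that $\BH_k(\widetilde{\bX}_n)\prob\BH_k(\bmu)$: since $\widetilde{\bX}_n$ lies between $\bmu$ and $\bX_n\prob\bmu$, we have $\widetilde{\bX}_n\prob\bmu$, and continuity of the second partial derivatives of $g$ near $\bmu$ then gives $\BH_k(\widetilde{\bX}_n)=\BH_k(\bmu)+o_p(1)$ entrywise. Splitting the form accordingly,
\[
\tfrac12\,\bm{v}_n'\BH_k(\widetilde{\bX}_n)\bm{v}_n=\tfrac12\,\bm{v}_n'\BH_k(\bmu)\bm{v}_n+R_n,\qquad R_n\doteq\tfrac12\,\bm{v}_n'\big[\BH_k(\widetilde{\bX}_n)-\BH_k(\bmu)\big]\bm{v}_n,
\]
the remainder obeys $|R_n|\le\tfrac12\|\bm{v}_n\|^2\,\|\BH_k(\widetilde{\bX}_n)-\BH_k(\bmu)\|$; as $\|\bm{v}_n\|^2$ is bounded in probability while the matrix norm is $o_p(1)$, this forces $R_n\prob0$. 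For the leading term, the map $\bx\mapsto\tfrac12\,\bx'\BH_k(\bmu)\bx$ is continuous, so the continuous mapping theorem applied to \eqref{lim} yields $\tfrac12\,\bm{v}_n'\BH_k(\bmu)\bm{v}_n\distr\tfrac12\,\bW'\BH_k(\bmu)\bW$. Assembling the linear term, $R_n$, and the leading term through Slutsky's theorem then delivers \eqref{delta2}; the fact that the identities hold only on $A_n$ with $\Pr(A_n)\to1$ is harmless, since modifying a sequence on an event of vanishing probability changes neither its limit in probability nor its limit in distribution.

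The step I expect to require the most care is the quadratic term, in which the Hessian is evaluated at the random point $\widetilde{\bX}_n$ rather than at $\bmu$. The delicate point is to couple the weak convergence $\bm{v}_n\distr\bW$ with the convergence in probability $\BH_k(\widetilde{\bX}_n)\prob\BH_k(\bmu)$ to a constant matrix; the bound on $R_n$ (a stochastically bounded quadratic form times an $o_p(1)$ matrix) together with the continuous mapping theorem for the fixed-matrix form is precisely what makes this coupling rigorous, and it is also where the hypothesis that $g$ is $C^2$ near $\bmu$ (rather than merely twice differentiable at $\bmu$) is used.
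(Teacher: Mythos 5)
Your proposal is correct and follows essentially the same route as the paper's proof: a second-order Taylor expansion about $\bmu$, elimination of the linear term via \eqref{sing.var}, and the continuous mapping/Slutsky theorems applied to the quadratic form in $\surd{n}(\bX_n-\bmu)$. The only difference is that you rigorously justify the $o_p(1)$ remainder (Lagrange form, continuity of the Hessian at the intermediate point, and the stochastic boundedness bound on $R_n$), whereas the paper simply asserts that ``the Taylor expansion suggests the approximation'' with an $o_p(1)$ error.
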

 \begin{proof}
 By \eqref{lim}, we see that $\bX_n\prob\bmu$.
 Therefore, the Taylor expansion suggests the approximation
 \begin{dmath*}
 n[g(\bX_n)-g(\bmu)]
 =
 n[\nabla g(\bmu)]' (\bX_n-\bmu)
 +\frac{1}{2}[\surd{n}(\bX_n-\bmu)]'
 \BH_k(\bmu)[\surd{n}(\bX_n-\bmu)]
 +o_p(1)
 \end{dmath*}
 and, by  \eqref{sing.var}, the rhs of the
 above equals to
 \[
 \frac{1}{2}[\surd{n}(\bX_n-\bmu)]'
 \BH_k(\bmu)[\surd{n}(\bX_n-\bmu)]
 +o_p(1).
 \]
 By Slutsky's Theorem and in view of \eqref{lim},
 we conclude that the above quantity
 tends in distribution to $\frac{1}{2}
 \bW'\BH_k(\bmu)\bW$.
 \hfill\qed
 \end{proof}

 \Cref{lem.delta2} immediately applies
 to $M_{k,n}$ whenever the random sample arizes
 from a singular distribution.
 This result is stated in the following theorem;
 for the proof see \Cref{appendix}.

 \begin{theorem}
 \label{theo.singular.limit}
 If $M_{k,n}$ is the sample central moment of a singular
 distribution of order $k\geq 2$, then
 \begin{equation}
 \label{lim.sing}
 n(M_{k,n}-\mu_k)\distr \frac{1}{2}k(k-1)\mu_{k-2} W_1^2 - k
 W_1 W_{k-1},
 \end{equation}
 where
 \begin{equation}
 \label{sing.normal}
 \left({W_1\atop W_{k-1}}\right)
 \sim
 N_2
 \left(\left({0\atop0}\right),
 \left({\sigma^2\atop\mu_k}~~{\mu_k\atop\mu_{2k-2}-\mu_{k-1}^2}\right)
 \right).
 \end{equation}
 \end{theorem}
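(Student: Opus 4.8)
The plan is to apply the second-order delta method of \Cref{lem.delta2} to the coordinate map $g=g_{k,k}$, evaluated along $\bX_n=\bmk$. Since $M_{k,n}=g_{k,k}(\bmk)$ and every monomial of $g_{k,k}$ other than $x_k$ carries a factor $x_1$, the vanishing of the first coordinate of $\bmuk$ gives $g_{k,k}(\bmuk)=\mu_k$. By the multivariate CLT \eqref{clt} we have $\surd{n}(\bmk-\bmuk)\distr\bW$ with $\bW\sim N_k(\bm{0}_k,\Sigk)$, which is exactly hypothesis \eqref{lim} of \Cref{lem.delta2}. Thus the theorem follows once the degeneracy condition \eqref{sing.var} is verified and the quadratic form $\tfrac12\bW'\BH_k(\bmuk)\bW$ in \eqref{delta2} is computed.

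The decisive step, and the point where singularity enters, is condition \eqref{sing.var}. A direct differentiation, using that the first coordinate of $\bmuk$ vanishes, gives
\[
\nabla g_{k,k}(\bmuk)=(-k\mu_{k-1},0,\ldots,0,1)',
\]
the only surviving first derivatives being $\partial g/\partial x_1=-k\mu_{k-1}$ (from the monomial $-k\,x_{k-1}x_1$, i.e.\ the term $i=k-1$) and $\partial g/\partial x_k=1$. Consequently
\[
n[\nabla g_{k,k}(\bmuk)]'(\bmk-\bmuk)
=\sum_{i=1}^n\bigl[(X_i-\mu)^k-k\mu_{k-1}(X_i-\mu)-\mu_k\bigr],
\]
and under singularity each summand is $0$ with probability one by \eqref{singular}. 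Hence the left-hand side is identically $0$, so \eqref{sing.var} holds trivially. I expect this to be the conceptual heart of the argument, even though the computation itself is short.

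It then remains to evaluate $\tfrac12\bW'\BH_k(\bmuk)\bW$. Computing the second derivatives at $\bmuk$ (again exploiting $x_1=0$), for $k\geq3$ the only nonzero entries of $\BH_k(\bmuk)$ are $H_{11}=k(k-1)\mu_{k-2}$ (the $\partial^2/\partial x_1^2$ of the monomial $\binom{k}{2}x_{k-2}x_1^2$, i.e.\ $i=k-2$) and $H_{1,k-1}=H_{k-1,1}=-k$ (the $\partial^2/\partial x_1\partial x_{k-1}$ of $-k\,x_{k-1}x_1$); every other second partial retains a positive power of $x_1$ and vanishes. This yields
\[
\tfrac12\bW'\BH_k(\bmuk)\bW=\tfrac12 k(k-1)\mu_{k-2}W_1^2-kW_1W_{k-1},
\]
which is the right-hand side of \eqref{lim.sing}. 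The case $k=2$ must be handled separately because there $x_{k-1}=x_1$: here $g_{2,2}(\bx_2)=-x_1^2+x_2$, so $\BH_2(\bmu_2)=\operatorname{diag}(-2,0)$ and the form equals $-W_1^2$; this matches the stated expression since $W_{k-1}=W_1$ and $\mu_{k-2}=\mu_0=1$ give $\tfrac12\cdot2\cdot1\cdot W_1^2-2W_1^2=-W_1^2$. (For $k=3$ one simply checks $H_{11}=6\mu_1=0$, consistent with the formula.)

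Finally, the limit in \eqref{lim.sing} depends on $\bW$ only through $(W_1,W_{k-1})'$. For $k\geq3$ this is a genuine subvector of $\bW\sim N_k(\bm{0}_k,\Sigk)$, hence bivariate normal with mean $\bm{0}_2$ and covariance read off from $\Sigk=(\mu_{i+j}-\mu_i\mu_j)$, namely $\sigma^2$, $\mu_k$ and $\mu_{2k-2}-\mu_{k-1}^2$ in the appropriate slots, which is exactly \eqref{sing.normal} (for $k=2$ the same display holds in degenerate form with $W_{k-1}=W_1$). The only real obstacle is the Hessian bookkeeping together with the low-order cases $k\in\{2,3\}$, all of which are routine.
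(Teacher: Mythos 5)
Your proposal is correct and is essentially the paper's own proof: both apply \Cref{lem.delta2} to $g_{k,k}$ at $\bX_n=\bmk$, obtain $\nabla g_{k,k}(\bmuk)=(-k\mu_{k-1},0,\ldots,0,1)'$ and a Hessian whose only nonzero entries at $\bmuk$ are $k(k-1)\mu_{k-2}$ in position $(1,1)$ and $-k$ in positions $(1,k-1)$, $(k-1,1)$, and then read the law of $(W_1,W_{k-1})'$ off $\Sigk$. Your two small deviations are sound and, if anything, slightly sharper: you verify \eqref{sing.var} directly from the a.s.\ identity \eqref{singular} (each summand $(X_i-\mu)^k-k\mu_{k-1}(X_i-\mu)-\mu_k$ vanishes), where the paper instead computes $\Var[-k\mu_{k-1}m_{1,n}+(m_{k,n}-\mu_k)]=v_k^2/n=0$ --- the same degeneracy in different clothing --- and your separate $k=2$ treatment with $\BH_2(\bmu_2)=\operatorname{diag}(-2,0)$ carries the correct sign (the paper's displayed $\BH_2(\bmu_2)$ with entry $+2$ is a slip; your value is the one consistent with \eqref{lim.sing}, which evaluates to $-W_1^2$ when $k=2$).
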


 The limiting distribution in \eqref{lim.sing} can be
 expressed in terms of two
 independent and identically distributed standard normal random
 variables, $Z_1$, $Z_2$. Indeed,
 observing that $\sigma^2(\mu_{2k-2}-\mu_{k-1}^2)-\mu_k^2
 =\Var[\sigma (X-\mu)^{k-1}-\mu_k {(X-\mu)}/{\sigma}]\geq
 0$,
 it is easily seen that
 \[
 \left({W_1 \atop W_{k-1}}\right)
 \law
 \left({\sigma Z_1 \atop \frac{\mu_k}{\sigma}Z_{1}+\frac{\gamma_k}{\sigma}  Z_2}\right),
 \quad \textrm{where}\quad
 \gamma_k\doteq\sqrt{\sigma^2(\mu_{2k-2}-\mu_{k-1}^2)-\mu_k^2}.
 \]
 Therefore, \eqref{lim.sing} can be rewritten as
 \begin{equation}
 \label{lim2.sing}
 n(M_{k,n}-\mu_k)\distr
 \left(\frac{1}{2}k(k-1)\sigma^2\mu_{k-2}-k\mu_k\right)
 Z_1^2 - k\gamma_k Z_1 Z_2.
 \end{equation}
 In order to obtain a further simplification, we shall make
 use of the following proposition.
 \begin{proposition}
 \label{prop.normal} If $Z_1$, $Z_2$ are
 independent and identically distributed standard
 normal random variables, then, for arbitrary constants
 $\alpha,\beta\in\R$,
 \begin{equation}
 \label{norm.simplified}
 \alpha Z_1^2+\beta Z_1Z_2 \law
 \frac{1}{2}\left(\sqrt{\alpha^2+\beta^2}+\alpha\right)Z_1^2
 -
 \frac{1}{2}\left(\sqrt{\alpha^2+\beta^2}-\alpha\right)Z_2^2.
 \end{equation}
 \end{proposition}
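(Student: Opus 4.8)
The plan is to read the left-hand side as a quadratic form in the standard bivariate normal vector and to diagonalize it. Setting $\bZ=(Z_1,Z_2)'$, the hypotheses give $\bZ\sim N_2(\bm{0}_2,\bm{I}_2)$, and
\[
\alpha Z_1^2+\beta Z_1Z_2=\bZ'A\bZ,\qquad A=\begin{pmatrix}\alpha & \beta/2\\ \beta/2 & 0\end{pmatrix}.
\]
Thus the whole statement becomes a claim about the spectrum of the real symmetric matrix $A$, and the right-hand side of \eqref{norm.simplified} should turn out to be a weighted sum of two independent squared standard normals whose weights are precisely the eigenvalues of $A$.

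The key tool is the orthogonal invariance of the standard normal law: if $Q$ is any orthogonal $2\times2$ matrix, then $Q'\bZ\sim N_2(\bm{0}_2,\bm{I}_2)$ as well, since the transformed vector remains Gaussian and its covariance is $Q'\bm{I}_2 Q=\bm{I}_2$. First I would take a spectral decomposition $A=QDQ'$ with $Q$ orthogonal and $D=\mathrm{diag}(\lambda_1,\lambda_2)$, and set $\bY=Q'\bZ=(Y_1,Y_2)'$. Then $\bZ'A\bZ=\bY'D\bY=\lambda_1Y_1^2+\lambda_2Y_2^2$, where $(Y_1,Y_2)'\law(Z_1,Z_2)'$ are again i.i.d.\ standard normal. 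So it only remains to identify the eigenvalues of $A$.

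Next I would compute the characteristic polynomial $\det(A-\lambda\bm{I}_2)=\lambda^2-\alpha\lambda-\beta^2/4$, whose roots are
\[
\lambda_{1,2}=\tfrac12\left(\alpha\pm\sqrt{\alpha^2+\beta^2}\right).
\]
Writing $\lambda_1=\tfrac12(\sqrt{\alpha^2+\beta^2}+\alpha)\ge 0$ and $\lambda_2=-\tfrac12(\sqrt{\alpha^2+\beta^2}-\alpha)\le 0$ and substituting into $\lambda_1Y_1^2+\lambda_2Y_2^2$ reproduces exactly the right-hand side of \eqref{norm.simplified}, after relabelling $Y_1,Y_2$ as $Z_1,Z_2$.

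There is no serious obstacle here: the argument is a one-line spectral computation once invariance is invoked. The only points deserving a line of care are confirming the rotation-invariance step and checking the degenerate case $\beta=0$, where $A$ is already diagonal with eigenvalues $\alpha$ and $0$; there $\sqrt{\alpha^2+\beta^2}=|\alpha|$, and the identity collapses correctly to $\alpha Z_1^2\law\alpha Z_1^2$ when $\alpha\ge 0$ and to $\alpha Z_1^2\law\alpha Z_2^2$ when $\alpha<0$. I would streamline the exposition by stating the orthogonal invariance of $N_2(\bm{0}_2,\bm{I}_2)$ up front and then simply quoting the eigenvalues.
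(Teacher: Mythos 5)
Your proof is correct, but it takes a genuinely different route from the paper. You diagonalize the quadratic form: writing $\alpha Z_1^2+\beta Z_1Z_2=\bZ'A\bZ$ with $A=\bigl(\begin{smallmatrix}\alpha & \beta/2\\ \beta/2 & 0\end{smallmatrix}\bigr)$, invoking the spectral theorem and the orthogonal invariance of $N_2(\bm{0}_2,\bm{I}_2)$, and reading off the eigenvalues $\tfrac12\bigl(\alpha\pm\sqrt{\alpha^2+\beta^2}\bigr)$ from the characteristic polynomial $\lambda^2-\alpha\lambda-\beta^2/4$. The paper instead computes the moment generating functions of both sides — showing each equals $\bigl(1-2\alpha t-\beta^2t^2\bigr)^{-1/2}$ on the interval $\bigl(-(\rho-\alpha)^{-1},(\rho+\alpha)^{-1}\bigr)$ with $\rho=\sqrt{\alpha^2+\beta^2}$, the right-hand side by the standard $\chi^2_1$ MGF and the left-hand side by a direct Gaussian double integral — and then appeals to uniqueness of distributions with an MGF finite near the origin. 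Your argument is more structural: it explains \emph{why} the constants $\tfrac12\bigl(\sqrt{\alpha^2+\beta^2}\pm\alpha\bigr)$ appear (they are eigenvalues), it yields equality in law directly without any uniqueness theorem (the two sides are the same measurable function applied to vectors equal in distribution), and it generalizes at once to quadratic forms $\bZ'A\bZ$ in any dimension. The paper's computation is more elementary in the sense that it uses only calculus and the MGF uniqueness fact, with no linear algebra; it also produces the explicit MGF as a by-product, which can be useful downstream. One small remark: in your write-up the case $\beta=0$ needs no separate treatment at all, since the spectral argument covers it uniformly; your check is fine but could be dropped.
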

 \begin{proof}
 The assertion is obvious if $\beta=0$.
 Assume that $\beta\neq 0$ and set $\rho=\sqrt{\alpha^2+\beta^2}>0$.
 It is easily seen that the moment
 generating function of the rhs of
 \eqref{norm.simplified} is given by
 \[
 M_2(t)=\frac{1}{\sqrt{1-2\alpha t-\beta^2t^2}}
 \]
 and it is finite in the interval
 $\{t\in\R\colon 1-2\alpha t-\beta^2 t^2>0\}
 =(-(\rho-\alpha)^{-1},(\rho+\alpha)^{-1})$,
 which contains the origin because $\rho+\alpha>0$ and
 $\rho-\alpha>0$. Also, the moment generating function of
 the lhs of \eqref{norm.simplified} is
 \[
 M_1(t)=\E\left[\exp\left(\alpha t Z_1^2 +\beta t Z_1 Z_2\right)\right]=\frac{1}{2\pi}
 \iint_{\R^2}
 e^{-\frac{1}{2}\gamma(x,y)} \ud{y}\ud{x},
 \]
 where
 \[
 \gamma(x,y)=x^2+y^2-2\alpha t x^2 -2\beta t xy=\left(1-2\alpha t-\beta^2
 t^2\right)x^2+(y-\beta t x)^2.
 \]
 Therefore, for $t\in(-(\rho-\alpha)^{-1},(\rho+\alpha)^{-1})$,
\begin{dmath*}
 M_1(t)
 =
 \frac{1}{\sqrt{2\pi}}\int_{-\infty}^{\infty}
 e^{-\frac{x^2}{2}\left(1-2\alpha t-\beta^2 t^2\right)}
 \left(\frac{1}{\sqrt{2\pi}}\int_{-\infty}^{\infty}
 e^{-\frac{1}{2}(y-\beta t x)^2}
 \ud{y}\right)\ud{x}
 =
 {\frac{1}{\sqrt{2\pi}}\int_{-\infty}^{\infty}
 e^{-\frac{x^2}{2}\left(1-2\alpha t-\beta^2 t^2\right)} \ud{x}=M_2(t),}
 \end{dmath*}
 and the proof is complete.
 \hfill\qed
 \end{proof}

 \begin{corollary}
 \label{cor.simple}
 If $(X_1,X_2)'$ follows a bivariate normal distribution
 with $\E(X_1)=\mu_1$, $\E(X_2)=\mu_2$, $\Var(X_1)=\sigma_1^2$, $\Var(X_2)=\sigma_2^2$
 and $\Cov(X_1,X_2)=\rho\sigma_1\sigma_2$, where
 $\mu_1,\mu_2\in\R$ and
 $\sigma_1\geq 0$, $\sigma_2\geq 0$ and $-1\leq\rho\leq 1$
 are arbitrary constants, then
 \[
  (X_1-\mu_1)(X_2-\mu_2) \law
 \sigma_1\sigma_2 \left[
 \frac{1}{2}(1+\rho)Z_1^2-\frac{1}{2}(1-\rho)Z_2^2\right].
 \]
 \end{corollary}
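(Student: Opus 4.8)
The plan is to reduce the statement to a single application of \Cref{prop.normal}. First I would center the variables by setting $Y_1\doteq X_1-\mu_1$ and $Y_2\doteq X_2-\mu_2$, so that $(Y_1,Y_2)'$ is a centered bivariate normal vector with variances $\sigma_1^2,\sigma_2^2$ and covariance $\rho\sigma_1\sigma_2$, and the product of interest is simply $Y_1Y_2$. This removes the means and leaves a scale/correlation problem.

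Next I would invoke the standard representation of a centered bivariate normal through two independent standard normals $Z_1,Z_2$, namely
\[
Y_1=\sigma_1 Z_1,
\qquad
Y_2=\sigma_2\bigl(\rho Z_1+\sqrt{1-\rho^2}\,Z_2\bigr).
\]
A direct check of the first two moments (zero means, variances $\sigma_1^2$ and $\sigma_2^2$, covariance $\rho\sigma_1\sigma_2$) confirms that this pair has exactly the prescribed joint distribution, so it is legitimate to compute with it. Multiplying the two coordinates gives
\[
Y_1Y_2=\sigma_1\sigma_2\bigl(\rho Z_1^2+\sqrt{1-\rho^2}\,Z_1Z_2\bigr).
\]

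I would then apply \Cref{prop.normal} with $\alpha=\rho$ and $\beta=\sqrt{1-\rho^2}$. The decisive simplification is that $\alpha^2+\beta^2=\rho^2+(1-\rho^2)=1$, whence $\sqrt{\alpha^2+\beta^2}=1$, and \eqref{norm.simplified} becomes
\[
\rho Z_1^2+\sqrt{1-\rho^2}\,Z_1Z_2
\law
\tfrac{1}{2}(1+\rho)Z_1^2-\tfrac{1}{2}(1-\rho)Z_2^2.
\]
Multiplying this distributional identity by the constant factor $\sigma_1\sigma_2$ yields the claimed expression for $(X_1-\mu_1)(X_2-\mu_2)$.

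I do not expect a serious obstacle; the only items requiring care are the boundary and degenerate parameter values, which the statement explicitly permits. When $\sigma_1=0$ or $\sigma_2=0$ both sides vanish identically. When $|\rho|=1$ the factor $\sqrt{1-\rho^2}$ is zero and the representation degenerates, but one checks directly that the right-hand side then reduces to $\sigma_1\sigma_2 Z_1^2$ for $\rho=1$ and to $-\sigma_1\sigma_2 Z_2^2\law-\sigma_1\sigma_2 Z_1^2$ for $\rho=-1$ (using that $Z_1^2$ and $Z_2^2$ are both chi-square with one degree of freedom), in agreement with $Y_1Y_2$. Hence the identity holds over the entire range $\sigma_1,\sigma_2\ge0$ and $-1\le\rho\le1$.
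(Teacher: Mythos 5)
Your proof is correct and follows essentially the same route as the paper: represent the centered pair as $(\sigma_1 Z_1,\ \sigma_2(\rho Z_1+\sqrt{1-\rho^2}\,Z_2))'$ and apply \Cref{prop.normal} with $\alpha=\rho$, $\beta=\sqrt{1-\rho^2}$. The extra boundary-case checks you include are fine but not needed, since \Cref{prop.normal} already covers $\beta=0$ and the degenerate-variance cases are trivial.
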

 \begin{proof}
 Since
 $(X_1-\mu_1,X_2-\mu_2)'\law(\sigma_1 Z_1,
 \sigma_2(\rho Z_{1}+ \sqrt{1-\rho^2} Z_2))'$,
 we have that
 $(X_1-\mu_1)(X_2-\mu_2)\law\sigma_1\sigma_2 (\rho Z_1^2+ \sqrt{1-\rho^2}
 Z_1Z_2)$,
 and the assertion follows from \eqref{norm.simplified}
 with $\alpha=\rho$ and $\beta=\sqrt{1-\rho^2}$.
 \hfill\qed
 \end{proof}

 The main result is contained in the following theorem; its
 proof, being an immediate consequence of
 \eqref{lim2.sing} and \Cref{prop.normal}, is omitted.
 \begin{theorem}
 \label{theo.singular.limit2}
 If $M_{k,n}$ is the sample central moment of a singular
 distribution of order $k\geq 2$, then
 \[
 n(\mu_k-M_{k,n})\distr \frac{k}{2}\left(\sigma
 \surd{\theta_k}+\alpha_k\right)
 Z_1^2
 - \frac{k}{2}\left(\sigma\surd{\theta_k}-\alpha_k\right)
 Z_2^2,
 \]
 where $Z_1$, $Z_2$ are independent and identically distributed standard normal and
 \begin{equation}
 \label{a_k.b_k}
 \begin{split}
 \alpha_k&=\mu_k-\frac{1}{2}(k-1)\sigma^2 \mu_{k-2}, \\
 \theta_k&=\mu_{2k-2}-\mu_{k-1}^2-(k-1)\mu_{k-2}
 \left[\mu_k-\frac{1}{4}(k-1)\sigma^2\mu_{k-2}\right].
 \end{split}
 \end{equation}
 \end{theorem}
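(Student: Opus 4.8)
The plan is to take the weak limit \eqref{lim2.sing} as given and merely recast its right-hand side through \Cref{prop.normal}, after which the stated representation falls out by an elementary rearrangement. First I would observe that the coefficient of $Z_1^2$ in \eqref{lim2.sing} is $\frac{1}{2}k(k-1)\sigma^2\mu_{k-2}-k\mu_k=-k\alpha_k$, with $\alpha_k$ exactly the constant defined in \eqref{a_k.b_k}. Hence \eqref{lim2.sing} has the form $n(M_{k,n}-\mu_k)\distr \alpha Z_1^2+\beta Z_1Z_2$ with $\alpha=-k\alpha_k$ and $\beta=-k\gamma_k$, where $\gamma_k=\sqrt{\sigma^2(\mu_{2k-2}-\mu_{k-1}^2)-\mu_k^2}$ is the quantity introduced just before \eqref{lim2.sing}.

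Applying \Cref{prop.normal} with these $\alpha,\beta$ reduces the whole matter to evaluating $\sqrt{\alpha^2+\beta^2}=k\sqrt{\alpha_k^2+\gamma_k^2}$, so the single genuine computation is the identity $\alpha_k^2+\gamma_k^2=\sigma^2\theta_k$, with $\theta_k$ as in \eqref{a_k.b_k}. This is the one step I expect to require care, though it stays purely algebraic: expanding $\alpha_k^2=\mu_k^2-(k-1)\sigma^2\mu_{k-2}\mu_k+\frac{1}{4}(k-1)^2\sigma^4\mu_{k-2}^2$ and adding $\gamma_k^2=\sigma^2(\mu_{2k-2}-\mu_{k-1}^2)-\mu_k^2$, the two $\mu_k^2$ terms cancel and the remaining expression factors as $\sigma^2[\mu_{2k-2}-\mu_{k-1}^2-(k-1)\mu_{k-2}(\mu_k-\frac{1}{4}(k-1)\sigma^2\mu_{k-2})]=\sigma^2\theta_k$. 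Because $\gamma_k^2\geq0$, this identity forces $\sigma^2\theta_k=\alpha_k^2+\gamma_k^2\geq0$, and since $\sigma^2>0$ by non-degeneracy we conclude $\theta_k\geq0$, so that $\surd{\theta_k}$ is real and $\sqrt{\alpha^2+\beta^2}=k\sigma\surd{\theta_k}$.

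With these substitutions, \eqref{norm.simplified} yields
\[
n(M_{k,n}-\mu_k)\distr \frac{k}{2}(\sigma\surd{\theta_k}-\alpha_k)Z_1^2-\frac{k}{2}(\sigma\surd{\theta_k}+\alpha_k)Z_2^2.
\]
To finish I would negate both sides to pass to $n(\mu_k-M_{k,n})$ and then swap the labels of the i.i.d.\ (hence exchangeable) pair $Z_1,Z_2$; this interchanges the two coefficients and produces
\[
n(\mu_k-M_{k,n})\distr \frac{k}{2}(\sigma\surd{\theta_k}+\alpha_k)Z_1^2-\frac{k}{2}(\sigma\surd{\theta_k}-\alpha_k)Z_2^2,
\]
which is exactly the assertion of the theorem. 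The only point worth flagging as a potential obstacle is ensuring $\theta_k\geq0$ so that the square root is well-defined, and the algebraic identity above settles this automatically.
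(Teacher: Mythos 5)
Your proposal is correct and follows exactly the route the paper itself indicates: the paper omits this proof as ``an immediate consequence of \eqref{lim2.sing} and \Cref{prop.normal}'', and your argument supplies precisely that computation, with the key identity $\alpha_k^2+\gamma_k^2=\sigma^2\theta_k$ (which also settles $\theta_k\geq 0$) verified correctly. The final negation-and-relabeling step is sound since $(Z_1,Z_2)$ and $(Z_2,Z_1)$ are equal in distribution; equivalently, you could apply \Cref{prop.normal} directly to $n(\mu_k-M_{k,n})\distr k\alpha_k Z_1^2+k\gamma_k Z_1Z_2$ and read off the result at once.
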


 \begin{corollary}
 \label{cor.singular.limit}
 If $M_{k,n}$ is the sample central moment of a singular
 distribution of order $k\geq 2$, then
 there exists a constant $\lambda_k\in\R$
 such that
 \[
 n(\mu_k-M_{k,n})\distr \lambda_k \chi_1^2
 \]
 if and only if
 \begin{equation}
 \label{sing.0}
 \mu_k^2=\sigma^2\left(\mu_{2k-2}-\mu_{k-1}^2\right).
 \end{equation}
 If \eqref{sing.0} holds,
 $\lambda_k=k(\mu_k-({1}/{2})(k-1)\sigma^2\mu_{k-2})$
 and, thus,
 \begin{equation}
 \label{sing.1}
 n(\mu_k-M_{k,n})\distr k\left[\mu_k-\frac{1}{2}(k-1)\sigma^2\mu_{k-2}\right]
 \chi_1^2.
 \end{equation}
 If \eqref{sing.0} does not hold,
 \begin{equation}
 \label{sing.2}
 n(\mu_k-M_{k,n})\distr \lambda_k \chi_1^2 - \tilde{\lambda}_k\tilde{\chi}_1^2
 \end{equation}
 with $\lambda_k=({k}/{2})(\sigma
 \surd{\theta_k}+\alpha_k)>0$,
 $\tilde{\lambda}_k=({k}/{2})(\sigma
 \surd{\theta_k}-\alpha_k)>0$,
 $\alpha_k$ and $\theta_k$ as in \eqref{a_k.b_k},
 and where $\chi_1^2$ and $\tilde{\chi}_1^2$ are
 independent and identically distributed
 random variables from the chi-square distribution
 with one
 degree of freedom.
 \end{corollary}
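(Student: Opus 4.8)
The plan is to take the limiting law furnished by \Cref{theo.singular.limit2} as the starting point and to decide exactly when its two chi-square components collapse to a single one. Writing $\lambda_k=\frac{k}{2}(\sigma\surd{\theta_k}+\alpha_k)$ and $\tilde\lambda_k=\frac{k}{2}(\sigma\surd{\theta_k}-\alpha_k)$, with $\alpha_k,\theta_k$ as in \eqref{a_k.b_k}, \Cref{theo.singular.limit2} reads
\[
n(\mu_k-M_{k,n})\distr \lambda_k Z_1^2-\tilde\lambda_k Z_2^2,
\]
where $Z_1^2$ and $Z_2^2$ are independent $\chi_1^2$ variables. First I would record that $\lambda_k\ge 0$ and $\tilde\lambda_k\ge 0$: the inequality $\sigma\surd{\theta_k}\ge|\alpha_k|$ is equivalent to $\sigma^2\theta_k\ge\alpha_k^2$, and the variance identity displayed just before \Cref{prop.normal} gives $\sigma^2\theta_k-\alpha_k^2=\sigma^2(\mu_{2k-2}-\mu_{k-1}^2)-\mu_k^2=\gamma_k^2\ge0$. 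This also shows $\theta_k\ge0$, so that $\surd{\theta_k}$ is real and both coefficients are well defined.

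The algebraic core is the single identity $\lambda_k\tilde\lambda_k=\frac{k^2}{4}(\sigma^2\theta_k-\alpha_k^2)=\frac{k^2}{4}\gamma_k^2=\frac{k^2}{4}[\sigma^2(\mu_{2k-2}-\mu_{k-1}^2)-\mu_k^2]$, which I would verify by substituting \eqref{a_k.b_k} and watching the cross terms and fourth-moment terms cancel. Consequently $\lambda_k\tilde\lambda_k=0$ holds precisely when \eqref{sing.0} holds. This one computation links the geometric condition (one coefficient vanishes) to the moment condition \eqref{sing.0} and yields both directions of the claimed equivalence.

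The characterization itself I would settle through moment generating functions. On a neighbourhood of the origin the law $\lambda_k Z_1^2-\tilde\lambda_k Z_2^2$ has mgf $(1-2\lambda_k t)^{-1/2}(1+2\tilde\lambda_k t)^{-1/2}$, while a single multiple $\lambda\chi_1^2$ has mgf $(1-2\lambda t)^{-1/2}$. Equating these and clearing the square roots forces $(1-2\lambda_k t)(1+2\tilde\lambda_k t)=1-2\lambda t$ as an identity in $t$, i.e.\ $\lambda_k\tilde\lambda_k=0$; conversely, if one coefficient is zero the product collapses to a single factor. Hence $n(\mu_k-M_{k,n})$ is a multiple of $\chi_1^2$ iff \eqref{sing.0} holds. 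When \eqref{sing.0} holds, $\sigma\surd{\theta_k}=|\alpha_k|$ gives $(\lambda_k,\tilde\lambda_k)=(k\alpha_k,0)$ for $\alpha_k\ge0$ and $(0,-k\alpha_k)$ for $\alpha_k<0$; using $\tilde\chi_1^2\sim\chi_1^2$, the surviving term is $k\alpha_k\chi_1^2=k[\mu_k-\frac12(k-1)\sigma^2\mu_{k-2}]\chi_1^2$ in either case, which is \eqref{sing.1}. When \eqref{sing.0} fails, $\gamma_k^2>0$ yields $\lambda_k,\tilde\lambda_k>0$ strictly, and the limit is the genuine difference \eqref{sing.2}.

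The only real obstacle is the ``only if'' direction, namely ruling out that a genuine difference of two independent scaled $\chi_1^2$'s could accidentally coincide with a single scaled $\chi_1^2$. The mgf argument disposes of this cleanly, since for $\lambda_k,\tilde\lambda_k>0$ the mgf has two distinct singularities whereas any $\lambda\chi_1^2$ has only one; everything else is the routine substitution showing $\sigma^2\theta_k-\alpha_k^2=\sigma^2(\mu_{2k-2}-\mu_{k-1}^2)-\mu_k^2$ and the bookkeeping of signs in the degenerate case $\alpha_k=0$ (where $\lambda_k=\tilde\lambda_k=0$ and the limit is the point mass at $0$, consistent with $\lambda_k=k\alpha_k=0$).
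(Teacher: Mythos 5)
Your proposal is correct and follows essentially the same route as the paper, which states this corollary without a separate proof as an immediate reading-off from \Cref{theo.singular.limit2}: one checks via the identity $\sigma^2\theta_k-\alpha_k^2=\sigma^2\left(\mu_{2k-2}-\mu_{k-1}^2\right)-\mu_k^2$ exactly when one of the two coefficients $\frac{k}{2}\left(\sigma\surd{\theta_k}\pm\alpha_k\right)$ vanishes, and this is precisely condition \eqref{sing.0}. Your moment generating function computation is a welcome supplement, since it makes rigorous the one point the paper leaves implicit, namely that a genuine difference of two independent positive multiples of $\chi_1^2$ random variables can never have the distribution of a single multiple $\lambda\chi_1^2$.
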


 After some algebra it follows that \eqref{sing.0} is
 satisfied by all two-valued distributions of
 \Cref{cor.even,cor.odd}. In particular,
 from \eqref{sing.1} we can show that
 \[
 n(\mu_k-M_{k,n})\distr \frac{k(k-1)}{2} p_k^{k-1}
 \frac{(k+1)p_k^2-(k+1)p_k+1}{(k+1)p_k-1}\chi_1^2, \quad
 k=3,4,\ldots~.
 \]
 For example, the two-valued standardized distribution
 of \Cref{cor.even}
 with $p_6={1}/{2}+\sqrt{15(4\surd{10}-5)}/30$
 has sixth central moment equal to
 $\mu_6=(50-13\surd{10})/45$ and
 \[
 n\left( \frac{4\surd{10}-5}{135}-M_{6,n}\right)\distr
 \frac{50-13\surd{10}}{45} \chi_1^2.
 \]
 Finally, for the symmetric distributions of
 \Cref{lem.sym} one finds that \eqref{sing.0} is
 not satisfied and that
 $\lambda_k=\tilde{\lambda}_k=({\sqrt{k-1}}/{2})k^{{(k-1)}/{2}}$.
 Hence, since $\mu_k=0$, we conclude from \eqref{sing.2}
 the limit
 \[
 n M_{k,n}\distr \frac{\sqrt{k-1}}{2}k^{{(k-1)}/{2}}
 \left(\chi_1^2-\tilde{\chi}_1^2\right), \quad
 k=3,5,7,\ldots~.
 \]


 \begin{appendix}
 {
 \normalsize
 \section{Proofs}
 \label{appendix}

 We shall make use of the following Lemmas.
 For the proof of \Cref{lem.ui} see, e.g., \citet[p.~18]{Gut1988};
 for more general results, see \cite{AM2016}.

 \begin{lemma}
 \label{lem.ui}
 If $X,X_1,\ldots,X_n$ are independent and identically distributed
 with $\E(X)=\mu$, $\Var(X)=\sigma^2$
 and $\E|X|^\delta<\infty$ for some $\delta\geq 2$, then, for any
 $\alpha\in(0,\delta]$,
 \[
 \E\left|\surd{n}\left(\bar{X}_n-\mu\right)\right|^\alpha \to \sigma^\alpha \E|Z|^\alpha,
 \]
 where $Z\sim N(0,1)$ and
 $\bar{X}_n=(X_1+\cdots+X_n)/n$.
 \end{lemma}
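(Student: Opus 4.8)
The plan is to reduce the statement to a uniform--integrability problem and then control the relevant moments through a maximal moment inequality. Write $Y_n\doteq\surd{n}(\bar X_n-\mu)$ and $\xi_i\doteq X_i-\mu$, so that $Y_n=S_n/\surd{n}$ with $S_n=\sum_{i=1}^n\xi_i$. The classical central limit theorem gives $Y_n\distr\sigma Z$ with $Z\sim N(0,1)$, and the continuous mapping theorem yields $|Y_n|^{\alpha}\distr\sigma^{\alpha}|Z|^{\alpha}$. Since $\E\,\sigma^{\alpha}|Z|^{\alpha}<\infty$, the standard fact that weak convergence together with uniform integrability of $\{|Y_n|^{\alpha}\}_n$ forces convergence of the expectations reduces the entire assertion to proving that the family $\{|Y_n|^{\alpha}\}_n$ is uniformly integrable.

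First I would settle the range $0<\alpha<\delta$. By the Marcinkiewicz--Zygmund inequality there is a constant $B_{\delta}$, depending only on $\delta$, with $\E|S_n|^{\delta}\le B_{\delta}\,\E\big(\sum_{i=1}^n\xi_i^2\big)^{\delta/2}$; since $\delta/2\ge1$, convexity (Jensen applied to the normalised sum $\tfrac1n\sum_{i=1}^n\xi_i^2$) gives $\E\big(\sum_{i=1}^n\xi_i^2\big)^{\delta/2}\le n^{\delta/2}\E|\xi_1|^{\delta}$, whence
\[
\sup_n\E|Y_n|^{\delta}=\sup_n n^{-\delta/2}\E|S_n|^{\delta}\le B_{\delta}\,\E|X-\mu|^{\delta}<\infty.
\]
Thus $\{Y_n\}_n$ is bounded in $L^{\delta}$. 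For any $\alpha<\delta$ this makes $\{|Y_n|^{\alpha}\}_n$ bounded in $L^{\delta/\alpha}$ with exponent $\delta/\alpha>1$, and boundedness in $L^{q}$ for some $q>1$ is a classical sufficient condition for uniform integrability; hence $\E|Y_n|^{\alpha}\to\sigma^{\alpha}\E|Z|^{\alpha}$ for every $\alpha\in(0,\delta)$.

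The remaining and genuinely delicate point is the boundary case $\alpha=\delta$, which I expect to be the main obstacle: the bound $\sup_n\E|Y_n|^{\delta}<\infty$ no longer implies uniform integrability of $\{|Y_n|^{\delta}\}_n$, because no moment of order strictly larger than $\delta$ is available. To handle it I would run a truncation argument in the spirit of von Bahr and Esseen. Split $\xi_i=\xi_i'+\xi_i''$ into a part truncated at level $\surd{n}$ (recentred to keep mean zero) and its complementary tail, so that $Y_n=Y_n'+Y_n''$. The tail contribution is negligible in $L^{\delta}$, because $\E\big[|\xi_1|^{\delta}\mathds{1}\{|\xi_1|>\surd{n}\}\big]\to0$ under $\E|X|^{\delta}<\infty$, which forces $\E|Y_n''|^{\delta}\to0$; the truncated part is a normalised sum of bounded summands, to which a Lindeberg-type sharpening of the Marcinkiewicz--Zygmund estimate applies, giving $\limsup_n\E|Y_n|^{\delta}\le\sigma^{\delta}\E|Z|^{\delta}$. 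Since $|Y_n|^{\delta}\ge0$ and $Y_n\distr\sigma Z$, Fatou's lemma (via the Skorokhod representation) supplies the matching lower bound $\liminf_n\E|Y_n|^{\delta}\ge\sigma^{\delta}\E|Z|^{\delta}$, and the two together yield the claimed convergence. This balancing of the truncated and tail pieces is exactly the content of the result of \citet{Gut1988} invoked in the statement.
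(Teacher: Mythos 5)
Your situation is unusual: the paper does not prove \Cref{lem.ui} at all --- it simply cites \citet[p.~18]{Gut1988} (with \citet{AM2016} for generalizations) --- so your proposal is being compared against a citation rather than an argument. Your route is essentially the classical proof that lies behind that citation, and it is sound: reduce to uniform integrability, obtain $\sup_n\E|\surd{n}(\bar{X}_n-\mu)|^{\delta}<\infty$ from the Marcinkiewicz--Zygmund inequality combined with Jensen, conclude for every $\alpha<\delta$ from $L^{\delta/\alpha}$-boundedness, and treat the boundary exponent $\alpha=\delta$ by truncation at level $\surd{n}$. Two remarks. First, the only step that is not yet a proof is your handling of the truncated part: ``a Lindeberg-type sharpening of the Marcinkiewicz--Zygmund estimate'' is not a precise tool, and the crude MZ--Jensen bound you used earlier overshoots here (it gives $\E|Y_n'|^{2\delta}=O(n^{\delta/2})$, not $O(1)$). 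What actually closes the argument is a uniform bound on a moment of order strictly larger than $\delta$ for the truncated normalized sums, namely $\sup_n\E|Y_n'|^{2\delta}<\infty$; this follows from Rosenthal's inequality, or by iterating your own MZ--Jensen step once more on the centered squares $(\xi_i')^2-\E(\xi_i')^2$, exploiting the truncation bound $|\xi_i'|\le2\surd{n}$. That gives uniform integrability of $\{|Y_n'|^{\delta}\}$, and since $Y_n'=Y_n-Y_n''$ with $Y_n''\to0$ in $L^{\delta}$, Slutsky plus moment convergence under uniform integrability finishes the boundary case with no need for the Fatou/Skorokhod lower-bound step. Second, once $\{|Y_n|^{\delta}\}$ is shown uniformly integrable this way, the conclusion for all $\alpha\in(0,\delta]$ follows simultaneously, so your case split is harmless but redundant. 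What your approach buys is a self-contained proof exhibiting the mechanism of the result; what the paper's citation buys is brevity and a pointer to stronger statements.
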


 \begin{lemma}
 \label{lem.holder}
 If $X,X_1,\ldots,X_n$ are independent and identically distributed
 with $\E(X)=\mu$ and $\E|X|^\nu<\infty$ for some $\nu\in\{2,3,\ldots\}$,
 then, for any $j\in\{2,\ldots,\nu\}$,
 \begin{equation}
 \label{holder}
 \E|m_{j,n}|^{{\nu}/{j}} \leq \E|X-\mu|^{\nu},
 \end{equation}
 where
 $m_{j,n}=n^{-1}\sum_{i=1}^n(X_i-\mu)^j$.
 \end{lemma}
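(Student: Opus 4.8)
The plan is to exploit the fact that $m_{j,n}$ is an arithmetic mean of the i.i.d.\ summands $Y_i\doteq(X_i-\mu)^j$, combined with the convexity of the power function $t\mapsto|t|^{\nu/j}$. The crucial observation is that the exponent satisfies $\nu/j\ge1$ precisely because $2\le j\le\nu$; this is exactly what makes the power function convex and the whole argument go through. So no genuine obstacle is expected here—the content of the lemma is essentially a one-line consequence of Jensen's inequality, and the only thing to check is the bookkeeping of the exponents.

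Concretely, I would first write $m_{j,n}=\frac1n\sum_{i=1}^n Y_i$ and record the elementary identity $|Y_i|^{\nu/j}=\big|(X_i-\mu)^j\big|^{\nu/j}=|X_i-\mu|^{\nu}$. Since $\phi(t)\doteq|t|^{\nu/j}$ is convex on $\R$ (its exponent being at least $1$), Jensen's inequality applied to the finite uniform average gives the pointwise (in $\omega$) bound
\[
|m_{j,n}|^{\nu/j}=\phi\!\left(\frac1n\sum_{i=1}^n Y_i\right)\le\frac1n\sum_{i=1}^n\phi(Y_i)=\frac1n\sum_{i=1}^n|X_i-\mu|^{\nu}.
\]
Taking expectations on both sides and using that the $X_i$ are identically distributed collapses the right-hand side to $\E|X-\mu|^{\nu}$, which is exactly the claimed inequality \eqref{holder}; finiteness of this bound is guaranteed by the standing hypothesis $\E|X|^{\nu}<\infty$ (which forces $\E|X-\mu|^{\nu}<\infty$ as well).

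An equivalent route, perhaps matching the label of the lemma more closely, is to invoke Minkowski's inequality in $L^{\nu/j}$: writing $\|Z\|_{\nu/j}=(\E|Z|^{\nu/j})^{j/\nu}$ and using subadditivity of the norm for the exponent $\nu/j\ge1$, one gets $\|m_{j,n}\|_{\nu/j}\le\frac1n\sum_{i=1}^n\|Y_i\|_{\nu/j}=\|Y_1\|_{\nu/j}=(\E|X-\mu|^{\nu})^{j/\nu}$, and raising to the power $\nu/j$ yields \eqref{holder} again. Either way the proof is short; I would present the Jensen version as the primary argument, since it produces the pointwise bound first and only uses the identical distribution of the $X_i$ (not even independence) at the final expectation step.
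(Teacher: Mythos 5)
Your proof is correct and takes essentially the same approach as the paper: the paper's key step is the inequality $\left|\sum_{i} x_i\right|^p \leq n^{p-1}\sum_{i}|x_i|^p$ (derived there as a by-product of H\"{o}lder's inequality), which after dividing by $n^p$ is exactly the pointwise Jensen bound you use, followed by taking expectations and invoking identical distribution. The only cosmetic difference is that the paper treats $j=\nu$ as a separate case (its cited inequality assumes $p>1$), whereas your convexity argument handles all $j\in\{2,\ldots,\nu\}$ uniformly.
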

 \begin{proof}
 If $j=\nu$, then \eqref{holder} follows by taking expectations to the obvious
 inequality $|m_{j,n}|\leq\frac{1}{n}\sum_{i=1}^n|X_i-\mu|^j=\frac{1}{n}
 \sum_{i=1}^n|X_i-\mu|^{\nu}$.
 If $j<\nu$ (and thus, $\nu\geq 3$), we apply
 the inequality
 \[
 \left|\sum_{i=1}^{n} x_i\right|^p
 \leq \left(\sum_{i=1}^{n} |x_i|\right)^p
  \leq n^{p-1} \sum_{i=1}^n
 |x_i|^p,
 \quad p>1,
 \]
 (the last inequality is a by-product of H\"{o}lder's inequality)
 for $p=\nu/j$ and $x_i=(X_i-\mu)^j$. Then, we have
 \begin{align}
  \E|m_{j,n}|&^{{\nu}/{j}}
  = \frac{1}{n^{{\nu}/{j}}}
 \E\left|\sum_{i=1}^n (X_i-\mu)^j\right|^{{\nu}/{j}}
 \leq
 \frac{1}{n^{{\nu}/{j}}}
 \E\left(\sum_{i=1}^n |X_i-\mu|^j\right)^{{\nu}/{j}}
 \nonumber\\
  &\leq
  \frac{1}{n^{{\nu}/{j}}}
 \E\left( n^{{\nu}/{j}-1}
 \sum_{i=1}^n |X_i-\mu|^\nu
 \right)
 =\frac{1}{n}
 \E\left(\sum_{i=1}^n |X_i-\mu|^\nu
 \right)=\E|X-\mu|^{\nu}.
  \tag*{\qed}
 \end{align}
 \end{proof}

 \begin{pr}{Proof of \Cref{prop.exp.cov.conv}}
 \eqref{prop.exp.cov.conv(a)}
 Observe that the statement in \Cref{prop.exp.cov.conv}\eqref{prop.exp.cov.conv(a)} is equivalent to
 \begin{equation}
 \label{exp.conv2}
 \E[\surd{n}(M_{k,n}-\mu_k)]\to 0.
 \end{equation}
 Writing
 \begin{equation}
 \label{central}
 M_{k,n}-\mu_k=
 (m_{k,n}-\mu_k)+(-1)^{k-1}(k-1) m_{1,n}^k
 +\sum_{j=2}^{k-1} (-1)^{k-j} {k\choose j}
  m_{1,n}^{k-j} m_{j,n},
 \end{equation}
 it suffices to verify that
 \begin{enumerate}[leftmargin=17pt, itemsep=.5ex, labelindent=0pt, label=\rm(\roman*), ref=\textcolor{black}{\roman*}]
 \item
 $\surd{n}\E(m_{k,n}-\mu_k)=0$,
 \item
 $\surd{n}\E(m_{1,n}^k)\to 0$,
 \item
 $\surd{n}\E (m_{1,n}^{k-j} m_{j,n}) \to 0$, $j=2,\ldots,k-2$
 (provided $k\geq 4$), and
 \item
 $\surd{n}\E (m_{1,n} m_{k-1,n}) \to 0$ (provided $k\geq 3$).
 \end{enumerate}
 Now, (i) is obvious (since $\E(m_{k,n})=\mu_k$), (iv)
 follows  from $\E(m_{1,n} m_{k-1,n})=\mu_k/n$ and
 (ii) can be seen by using \Cref{lem.ui} with
 $\alpha=\delta=k$, which shows that
 \[
 \left|n^{k/2}\E\left(m_{1,n}^k\right)\right|\leq n^{k/2}
 \E |m_{1,n}|^k=\E\left|\surd{n}\left(\bar{X}_n-\mu\right)\right|^k\to
 \sigma^{k}\E|Z|^k<\infty,
 \]
 and thus,
 $|\surd{n}\E(m_{1,n}^k)|\leq n^{-(k-1)/2}\E|\surd{n}(\bar{X}_n-\mu)|^k\to
 0$. To show (iii), we assume that $k\geq 4$ and $2\leq j\leq k-2$, and
 we use H\"{o}lder's inequality with $p=k/(k-j)>1$, \Cref{lem.holder}
 with $\nu=k$ and \Cref{lem.ui} with $\alpha=\delta=k$
 to obtain
 \begin{dmath*}
 \left|\surd{n}\E\left(m_{1,n}^{k-j}m_{j,n}\right)\right|
 \leq
 {\surd{n}\E\left(|m_{1,n}|^{k-j} |m_{j,n}|\right)\leq
 \surd{n}\left(\E|m_{1,n}|^k\right)^{{(k-j)}/{k}}
 \left(\E|m_{j,n}|^{{k}/{j}}\right)^{{j}/{k}}
 }
 \leq
 \surd{n}
 \left[n^{-k/2}\E\left|\surd{n}\left(\bar{X}_n-\mu\right)\right|^k\right]^{{(k-j)}/{k}}
 \left(\E|X-\mu|^k\right)^{{j}/{k}}
 ={n^{-(k-1-j)/2}
 \left[\E\left|\surd{n}\left(\bar{X}_n-\mu\right)\right|^k\right]^{{(k-j)}/{k}}
 \left(\E|X-\mu|^k\right)^{{j}/{k}}
  }
  =
  {n^{-(k-1-j)/2} O(1)\to 0,}
 \end{dmath*}
 because $\E|\surd{n}(\bar{X}_n-\mu)|^k\to
 \sigma^{k}\E|Z|^k<\infty$.
 \medskip

 \noindent
 \eqref{prop.exp.cov.conv(b)}
 Observe that the statement in \Cref{prop.exp.cov.conv}\eqref{prop.exp.cov.conv(b)} is equivalent to
 \begin{equation}
 \label{cov.conv2}
 \Cov\left[\surd{n}\left(\bar{X}_n-\mu\right),\surd{n}(M_{k,n}-\mu_k)\right]\to
 \mu_{k+1}-k\sigma^2\mu_{k-1},
 \end{equation}
 and since $\E(\bar{X}_n-\mu)=0$, it suffices to verify that
 \begin{equation}
 \label{cov.conv3}
 n\E\left[\left(\bar{X}_n-\mu\right)(M_{k,n}-\mu_k)\right]
 =n\E[m_{1,n}(M_{k,n}-\mu_k)]\to
 \mu_{k+1}-k\sigma^2\mu_{k-1}.
 \end{equation}

 If $k=2$, then
 $n\E[m_{1,n}(M_{2,n}-\mu_2)]=n\E[(\bar{X}_n-\mu)(m_{2,n}-\mu_2)]
 -n\E(\bar{X}_n-\mu)^3=\mu_3-n\E(\bar{X}_n-\mu)^3$,
 and it easily seen, by \Cref{lem.ui} with
 $\alpha=\delta=3$, that $|n\E(\bar{X}_n-\mu)^3|\linebreak\leq n^{-1/2}
 \E|\surd{n}(\bar{X}_n-\mu)|^3\to 0$; thus, $n\E[m_{1,n}(M_{2,n}-\mu_2)]\to
 \mu_3$. Since $\mu_1=0$, \eqref{cov.conv3} is satisfied for $k=2$.

 If $k=3$,
 $n\E[m_{1,n}(M_{3,n}-\mu_3)]=n\E[(\bar{X}_n-\mu)(m_{3,n}-\mu_3)]
 +2n\E(\bar{X}_n-\mu)^4-3n\linebreak\E[m_{2,n}(\bar{X}_n-\mu)^2]$,
 and it is easy to see that
 $n\E[(\bar{X}_n-\mu)(m_{3,n}-\mu_3)]=\mu_4$. Also,
 by \Cref{lem.ui} with $\alpha=\delta=4$, $2n\E(\bar{X}_n-\mu)^4\to
 0$. Finally, $-3n\E[m_{2,n}(\bar{X}_n-\mu)^2]=-3[\mu_4+(n-1)\mu_2^2]/n\to -3
 \mu_2^2=-3\sigma^4$, which verifies \eqref{cov.conv3} for
 $k=3$.

 In the general case when $k\geq 4$, we write $M_{k,n}-\mu_k$ as in
 \eqref{central} and we observe that
 for \eqref{cov.conv3} to hold
 it suffices to verify that
 \begin{enumerate}[leftmargin=17pt, itemsep=.5ex, labelindent=0pt, label=\rm(\roman*), ref=\textcolor{black}{\roman*}]
 \item
 $n \E[m_{1,n}(m_{k,n}-\mu_k)]=\mu_{k+1}$,
 \item
 $n\E(m_{1,n}^{k+1}) \to 0$,
 \item
 $n \E(m_{1,n}^{k+1-j} m_{j,n}) \to 0$, $j=2,\ldots,k-2$, and
 \item
 $n\E(m_{1,n}^2 m_{k-1,n}) \to \sigma^2 \mu_{k-1}$.
 \end{enumerate}
 Calculating $\E[m_{1,n}(m_{k,n}-\mu_k)]=\E[(\bar{X}_n-\mu)(m_{k,n}-\mu_k)]
 =\E[(\bar{X}_n-\mu)m_{k,n}]
 =\linebreak n^{-2}\sum_{i_1=1}^n\sum_{i_2=1}^n
  \E[(X_{i_1}-\mu)(X_{i_2}-\mu)^k]
 ={\mu_{k+1}}/{n}$, we conclude (i),
 while (ii) follows by using \Cref{lem.ui} with
 $\alpha=\delta=k+1$. Also,
 \begin{dmath*}
 n\E\left(m_{1,n}^2 m_{k-1,n}\right)
 =
 \frac{1}{n^2}\sum_{i_1=1}^n\sum_{i_2=1}^n\sum_{i_3=1}^n
 \E\left[\left(X_{i_1}-\mu\right)\left(X_{i_2}-\mu\right)\left(X_{i_3}-\mu\right)^{k-1}\right]
 =
 {
 \frac{1}{n^2}\left[n \mu_{k+1}+n(n-1)\sigma^2\mu_{k-1}\right]
 \to \sigma^2\mu_{k-1},
 }
 \end{dmath*}
 which shows that (iv) is satisfied, and it remains to verify (iii).
 To this end, we use H\"{o}lder's
 inequality with $p=(k+1)/(k+1-j)>1$
 and \Cref{lem.holder} with $\nu=k+1$
 to obtain
 \begin{dmath*}
 \left|n \E\left(m_{1,n}^{k+1-j}m_{j,n}\right)\right|
 \leq
 {
 n
 \E|m_{1,n}|^{k+1-j} |m_{j,n}|
 \leq
 n \left(\E|m_{1,n}|^{k+1}\right)^{{(k+1-j)}/{(k+1)}}
 \left(\E|m_{j,n}|^{(k+1)/{j}}\right)^{{j}/{(k+1)}}
 }
 \leq
 {
 n
 \left[n^{-(k+1)/2}\E\left|\surd{n}\left(\bar{X}_n-\mu\right)\right|^{k+1}\right]^{{(k+1-j)}/{(k+1)}}
 \left(\E|X-\mu|^{k+1}\right)^{{j}/{(k+1)}}
 }
 =
 {
 n^{-(k-1-j)/2}
 \left[\E\left|\surd{n}\left(\bar{X}_n-\mu\right)\right|^{k+1}\right]^{{(k+1-j)}/{(k+1)}}
 \left(\E|X-\mu|^{k+1}\right)^{{j}/{(k+1)}}
 \to 0,
 }
 \end{dmath*}
 because $n^{-(k-1-j)/2}\to 0$; and, by
 \Cref{lem.ui} with $\alpha=\delta=k+1$,
 $\E|\surd{n}(\bar{X}_n-\mu)|^{k+1}\to \sigma^{k+1}\E|Z|^{k+1}
 <\infty$.
 \medskip

 \noindent
 \eqref{prop.exp.cov.conv(c)}
 Without loss of generality assume that $2\leq r\leq k$
 and observe that the first statement of
 \Cref{prop.exp.cov.conv}\eqref{prop.exp.cov.conv(c)}
 is equivalent to
 \begin{equation}
 \label{covar.conv2}
 \Cov[\surd{n}(M_{r,n}-\mu_r),\surd{n}(M_{k,n}-\mu_k)]\to
 v_{rk}.
 \end{equation}
 Since $\E|X|^{r+k}<\infty$, \eqref{exp.conv2} shows that
 $\E[\surd{n}(M_{k,n}-\mu_k)]\to 0$ and $\E[\surd{n}(\bar{X}_n-\mu)]\to 0$,
 and it suffices to verify that
 \begin{dmath}
 \label{covar.conv3}
 n\E[(M_{r,n}-\mu_r)(M_{k,n}-\mu_k)]
 \hiderel\to v_{rk}=\mu_{r+k}-\mu_r\mu_k-r\mu_{r-1}\mu_{k+1}
 -k\mu_{r+1}\mu_{k-1}+rk\sigma^2\mu_{r-1}\mu_{k-1}.
 \end{dmath}
 The proof can be deduced by showing that \eqref{covar.conv3}
 holds for each one of the cases $r=k=2$; $r=2$, $k=3$; $r=k=3$; $r=2$,
 $k\geq4$; $r=3$, $k\geq 4$; $4\leq r\leq k$.
 In the following we shall present the details only for the case
 where $4\leq r\leq k$; the other cases can be treated
 using similar (and simpler) arguments.

 Assume now that $4\leq r\leq k$.
 From \eqref{central}, we have
 \begin{dmath}
 \label{central_r}
 M_{r,n}-\mu_r
 =
 (m_{r,n}-\mu_r)-rm_{1,n}m_{r-1,n}
 +\sum_{j_1=2}^{r-2} (-1)^{r-j_1} {r\choose j_1} m_{1,n}^{r-j_1} m_{j_1,n}
 +(-1)^{r-1}(r-1) m_{1,n}^r,
 \end{dmath}
 \begin{dmath}
 \label{central_k}
 M_{k,n}-\mu_k
 =
 (m_{k,n}-\mu_k)-km_{1,n}m_{k-1,n}
 +\sum_{j_2=2}^{k-2} (-1)^{k-j_2} {k\choose j_2} m_{1,n}^{k-j_2} m_{j_2,n}
 +(-1)^{k-1}(k-1) m_{1,n}^k.
 \end{dmath}
 We shall show that the asymptotic covariance in \eqref{covar.conv2}
 can be determined by using only the first two terms in
 \eqref{central_r} and \eqref{central_k}. Indeed, it is
 easily seen that
 \eqref{covar.conv3} holds true if it can be shown that
 \begin{enumerate}[leftmargin=20pt, itemsep=.5ex, labelindent=0pt, label=\rm(\roman*), ref=\textcolor{black}{\roman*}]
  \item
 $n \E[(m_{r,n}-\mu_r)(m_{k,n}-\mu_k)]=\mu_{r+k}-\mu_r\mu_k$,
 \item
 $n\E[m_{1,n}m_{k-1,n}(m_{r,n}-\mu_r)] \to
 \mu_{r+1}\mu_{k-1}$,
 \item
 $n\E[m_{1,n}m_{r-1,n}(m_{k,n}-\mu_k)] \to
 \mu_{r-1}\mu_{k+1}$,
 \item
 $n\E(m_{1,n}^2m_{r-1,n}m_{k-1,n}) \to
 \sigma^2 \mu_{r-1}\mu_{k-1}$,
 \item
 $n \E[m_{1,n}^{k-j_2} m_{j_2,n}(m_{r,n}-\mu_r)] \to 0$, $j_2=2,\ldots,k-2$,
 \item
 $n\E [m_{1,n}^k (m_{r,n}-\mu_r)] \to 0$,
 \item
 $n \E (m_{1,n}^{k+1-j_2} m_{j_2,n}m_{r-1,n}) \to 0$, $j_2=2,\ldots,k-2$,
 \item
 $n\E (m_{1,n}^{k+1} m_{r-1,n}) \to 0$,
 \item
 $n \E [m_{1,n}^{r-j_1} m_{j_1,n}(m_{k,n}-\mu_k)] \to 0$, $j_1=2,\ldots,r-2$,
 \item
 $n\E (m_{1,n}^{r+1-j_1} m_{j_1,n} m_{k-1,n}) \to 0$, $j_1=2,\ldots,r-2$,
 \item
 $n \E (m_{1,n}^{r+k-j_1-j_2} m_{j_1,n}m_{j_2,n}) \to 0$,
    $j_1=2,\ldots,r-2$, $j_2=2,\ldots,k-2$,
 \item
 $n\E (m_{1,n}^{r+k-j_1} m_{j_1,n}) \to 0$, $j_1=2,\ldots,r-2$,
 \item
 $n \E [m_{1,n}^{r} (m_{k,n}-\mu_k)] \to 0$,
 \item
 $n\E (m_{1,n}^{r+1} m_{k-1,n}) \to 0$,
 \item
 $n \E (m_{1,n}^{r+k-j_2} m_{j_2,n}) \to 0$,
 $j_2=2,\ldots,k-2$, and
 \item
 $n\E (m_{1,n}^{r+k}) \to 0$.
 \end{enumerate}
 We now proceed to verify (i)--(xvi).
 Since $\E(m_{r,n})=\mu_r$ and $\E(m_{k,n})=\mu_k$,
 we have
 \begin{dmath*}
 n\E[(m_{r,n}-\mu_r)(m_{k,n}-\mu_k)]
 =
 {
 n[\E(m_{r,n}m_{k,n})-\mu_r\mu_k]
 =
 n\left\{
 \frac{1}{n^2}\sum_{i_1=1}^n\sum_{i_2=1}^n
 \E\left[\left(X_{i_1}-\mu\right)^r\left(X_{i_2}-\mu\right)^k\right]-\mu_r\mu_k\right\}
 }
 =
 {
 n\left\{\frac{1}{n^2}[n\mu_{r+k}+n(n-1)\mu_r\mu_k]-\mu_r\mu_k\right\}
 =\mu_{r+k}-\mu_r\mu_k,
 }
 \end{dmath*}
 which shows (i). Also, (ii), (iii) and (iv)
 follow by straightforward computations; e.g., for (ii) we have
 \begin{dmath*}
 n\E[m_{1,n}m_{k-1,n}(m_{r,n}-\mu_r)]
 =-\mu_r\mu_k
 +\frac{\mu_{r+k}+(n-1)(\mu_{r+1}\mu_{k-1}+
 \mu_{r}\mu_{k})}{n}\to\mu_{r+1}\mu_{k-1},
 \end{dmath*}
 while (iii) is similar to (ii), and (iv) can be deduced
 from
 \[
 n\E\left(m_{1,n}^2m_{r-1,n}m_{k-1,n}\right)
 =\frac{1}{n^3}\left[n(n-1)(n-2)\sigma^2\mu_{r-1}\mu_{k-1} +o\left(n^3\right)\right]
 \to\sigma^2\mu_{r-1}\mu_{k-1}.
 \]

 The vanishing limits (vi)--(viii) and (x)--(xvi)
 are by-products of \Cref{lem.ui,lem.holder}
 with $\alpha=\delta=\nu=r+k$, since
 $\E|X|^{r+k}<\infty$. Indeed, we have
 $|n\E(m_{1,n}^{r+k})|\leq n\E|m_{1,n}|^{r+k}
 =n^{-(r+k-2)/2}\E|\surd{n}(\bar{X}_n-\mu)|^{r+k}\to
 0$, which verifies (xvi). Also, using H\"{o}lder's
 inequality with $p=(r+k)/(r+k-j_2)>1$,
 we obtain (xv) as follows:
 \begin{dmath*}
 \left|n \E\left(m_{1,n}^{r+k-j_2}m_{j_2,n}\right)\right|
 \leq
 {n \E\left(|m_{1,n}|^{r+k-j_2} |m_{j_2,n}|\right)
 \leq
 n \left(\E|m_{1,n}|^{r+k}\right)^{\frac{r+k-j_2}{r+k}}
 \left(\E|m_{j_2,n}|^{\frac{r+k}{j_2}}\right)^{\frac{j_2}{r+k}}}
 \leq
 n\left[n^{-(r+k)/2}\E\left|\surd{n}\left(\bar{X}_n-\mu\right)\right|^{r+k}\right]^{\frac{r+k-j_2}{r+k}}
 \left(\E|X-\mu|^{r+k}\right)^{\frac{j_2}{r+k}}
 ={n^{-(r+k-j_2-2)/2}
 \left[\E\left|\surd{n}\left(\bar{X}_n-\mu\right)\right|^{r+k}\right]^{\frac{r+k-j_2}{r+k}}
 \left(\E|X-\mu|^{r+k}\right)^{\frac{j_2}{r+k}}
 \to 0,}
 \end{dmath*}
 because $n^{-(r+k-j_2-2)/2}\to 0$ and
 $\E|\surd{n}(\bar{X}_n-\mu)|^{r+k}\to \sigma^{r+k}\E|Z|^{r+k}
 <\infty$; (xii) is similar to (xv). For the limit (xiv)
 we have
 \begin{dmath*}
 \left|n\E \left[m_{1,n}^{r+1}m_{k-1,n}\right]\right|
 \leq
 {
 n\E\left(|m_{1,n}|^{r+1} |m_{k-1,n}|\right)
 \leq
 n \left(\E|m_{1,n}|^{r+k}\right)^{\frac{r+1}{r+k}}
 \left(\E|m_{k-1,n}|^{\frac{r+k}{k-1}}\right)^{\frac{k-1}{r+k}}
 }
 \leq
 {
 n^{-(r-1)/2}\left[\E\left|\surd{n}\left(\bar{X}_n-\mu\right)\right|^{r+k}\right]^{\frac{r+1}{r+k}}
 \left(\E|X-\mu|^{r+k}\right)^{\frac{k-1}{r+k}}
 \to 0,
 }
 \end{dmath*}
 and similarly for (viii). In order to prove (xiii),
 it is sufficient to show that
 $n\E[m_{1,n}^r m_{k,n}]\to 0$ and $n\E(m_{1,n}^r)\to 0$.
 The second limit is obvious since, as for (xvi), one can
 easily verify that
 $|n\E(m_{1,n}^r)|\leq n^{-(r-2)/2}\E|\surd{n}(\bar{X}_n-\mu)|^r
 =n^{-(r-2)/2}O(1)\to 0$. For the first limit, we have
 \begin{dmath*}
 \left|n\E \left(m_{1,n}^{r}m_{k,n}\right)\right|
 \leq
 {
 n\E\left(|m_{1,n}|^{r} |m_{k,n}|\right)
 \leq
 n \left(\E|m_{1,n}|^{r+k}\right)^{\frac{r}{r+k}}
 \left(\E|m_{k,n}|^{\frac{r+k}{k}}\right)^{\frac{k}{r+k}}
 }
 \leq
 {
 n^{-(r-2)/2}\left[\E\left|\surd{n}\left(\bar{X}_n-\mu\right)\right|^{r+k}\right]^{\frac{r}{r+k}}
 \left(\E|X-\mu|^{r+k}\right)^{\frac{k}{r+k}}
 \to 0.
 }
 \end{dmath*}
 Limit (vi) is similar to (xiii) and its proof is omitted.
 Regarding (xi), we have
 \begin{dmath*}
 \left|n\E \left(m_{1,n}^{r+k-j_1-j_2}m_{j_1,n}m_{j_2,n}\right)\right|
 \leq
 n\E\left(|m_{1,n}|^{r+k-j_1-j_2} |m_{j_1,n}m_{j_2,n}|\right)
 \leq
 n \left(\E|m_{1,n}|^{r+k}\right)^{\frac{r+k-j_1-j_2}{r+k}}
 \left(\E|m_{j_1,n}m_{j_2,n}|^{\frac{r+k}{j_1+j_2}}\right)^{\frac{j_1+j_2}{r+k}}
 \leq
 n \left(\E|m_{1,n}|^{r+k}\right)^{\frac{r+k-j_1-j_2}{r+k}}
 \left[\left(\E|m_{j_1,n}|^{\frac{r+k}{j_1}}\right)^{\frac{j_1}{j_1+j_2}}
 \left(\E|m_{j_2,n}|^{\frac{r+k}{j_2}}\right)^{\frac{j_2}{j_1+j_2}}\right]^{\frac{j_1+j_2}{r+k}}
 \leq
 n \left(\E|m_{1,n}|^{r+k}\right)^{\frac{r+k-j_1-j_2}{r+k}}
 \left(\E|X-\mu|^{r+k}\right)^{\frac{j_1+j_2}{r+k}}
 =
 n^{-(r+k-j_1-j_2-2)/2} \left[\E\left|\surd{n}\left(\bar{X}_n-\mu\right)\right|^{r+k}\right]^{\frac{r+k-j_1-j_2}{r+k}}
 \left(\E|X-\mu|^{r+k}\right)^{\frac{j_1+j_2}{r+k}} \hiderel\to 0.
 \end{dmath*}
 Similarly, for (x) we have
 \begin{dmath*}
 \left|n\E \left(m_{1,n}^{r+1-j_1}m_{j_1,n}m_{k-1,n}\right)\right|
 \leq
 n\left(\E|m_{1,n}|^{r+1-j_1} |m_{j_1,n}m_{k-1,n}|\right)
 \leq
 n \left(\E|m_{1,n}|^{r+k}\right)^{\frac{r+1-j_1}{r+k}}
 \left(\E|m_{j_1,n}m_{k-1,n}|^{\frac{r+k}{j_1+k-1}}\right)^{\frac{j_1+k-1}{r+k}}
 \leq
 n \left(\E|m_{1,n}|^{r+k}\right)^{\frac{r+1-j_1}{r+k}}
 \left[\left(\E|m_{j_1,n}|^{\frac{r+k}{j_1}}\right)^{\frac{j_1}{j_1+k-1}}
 \left(\E|m_{k-1,n}|^{\frac{r+k}{k-1}}\right)^{\frac{k-1}{j_1+k-1}}\right]^{\frac{j_1+k-1}{r+k}}
 \leq
 n \left(\E|m_{1,n}|^{r+k}\right)^{\frac{r+1-j_1}{r+k}}
 \left(\E|X-\mu|^{r+k}\right)^{\frac{j_1+k-1}{r+k}}
 =
 n^{-(r-j_1-1)/2} \left[\E\left|\surd{n}\left(\bar{X}_n-\mu\right)\right|^{r+k}\right]^{\frac{r+1-j_1}{r+k}}
 \left(\E|X-\mu|^{r+k}\right)^{\frac{j_1+k-1}{r+k}} \hiderel\to 0,
 \end{dmath*}
 while (vii) is similar to (x).

 It remains to verify (v) and (ix); but, since they are similar,
 it suffices to prove (v). If $j_2\in\{2,\ldots,k-3\}$ (and
 hence, $k\geq 5$ and $j_2<k-2$), we have
 \begin{dmath*}
 \left|n\E \left[m_{1,n}^{k-j_2}m_{j_2,n}(m_{r,n}-\mu_r)\right]\right|
 \leq
 n\E\left(|m_{1,n}|^{k-j_2}|m_{j_2,n}m_{r,n}|\right)+
 n |\mu_r| \E\left(|m_{1,n}|^{k-j_2}|m_{j_2,n}|\right),
 \end{dmath*}
 and it suffices to prove that $n\E(|m_{1,n}|^{k-j_2}|m_{j_2,n}m_{r,n}|)\to 0$
 and $n \E(|m_{1,n}|^{k-j_2}|m_{j_2,n}|)\to 0$.
 For the first quantity, we have
 \begin{dmath*}
 n\E\left(\left|m_{1,n}\right|^{k-j_2}\left|m_{j_2,n}m_{r,n}\right|\right)
 \leq
 n \left(\E|m_{1,n}|^{r+k}\right)^{\frac{k-j_2}{r+k}}
 \left(\E|m_{j_2,n}m_{r,n}|^{\frac{r+k}{r+j_2}}\right)^{\frac{r+j_2}{r+k}}
 \leq
 n \left(\E|m_{1,n}|^{r+k}\right)^{\frac{k-j_2}{r+k}}
 \left[\left(\E|m_{j_2,n}|^{\frac{r+k}{j_2}}\right)^{\frac{j_2}{r+j_2}}
 \left(\E|m_{r,n}|^{\frac{r+k}{r}}\right)^{\frac{r}{r+j_2}}\right]^{\frac{r+j_2}{r+k}}
 \leq
 n^{-(k-j_2-2)/2} \left[\E\left|\surd{n}\left(\bar{X}_n-\mu\right)\right|^{r+k}\right]^{\frac{k-j_2}{r+k}}
 \left(\E|X-\mu|^{r+k}\right)^{\frac{r+j_2}{r+k}} \hiderel\to 0,
 \end{dmath*}
 because $k-j_2-2>0$. Similarly, for the second quantity we
 have
 \begin{dmath*}
 n\E\left(\left|m_{1,n}\right|^{k-j_2}\left|m_{j_2,n}\right|\right)
 \leq
 n \left(\E|m_{1,n}|^{r+k}\right)^{\frac{k-j_2}{r+k}}
 \left(\E|m_{j_2,n}|^{\frac{r+k}{r+j_2}}\right)^{\frac{r+j_2}{r+k}}
 \leq
 n \left(\E|m_{1,n}|^{r+k}\right)^{\frac{k-j_2}{r+k}}
 \left(\E|m_{j_2,n}|^{\frac{r+k}{j_2}}\right)^{\frac{j_2}{r+k}}
 \leq
 n^{-(k-j_2-2)/2} \left[\E\left|\surd{n}\left(\bar{X}_n-\mu\right)\right|^{r+k}\right]^{\frac{k-j_2}{r+k}}
 \left(\E|X-\mu|^{r+k}\right)^{\frac{j_2}{r+k}} \hiderel\to 0,
 \end{dmath*}
 because $k-j_2-2>0$.
 Finally, it remains to study the limit (v) when $j_2=k-2$;
 in this case the above limits do not necessarily vanish.
 However, since $j_2=k-2$ we have
 \[
 n\E \left[m_{1,n}^{k-j_2}m_{j_2,n}(m_{r,n}-\mu_r)\right]
 =n\E\left(m_{1,n}^2 m_{r,n} m_{k-2,n}\right)
  -n\mu_r \E\left(m_{1,n}^2m_{k-2,n}\right),
 \]
 and direct computations show that
 \[
 n\E\left(m_{1,n}^2 m_{r,n}
 m_{k-2,n}\right)=\frac{1}{n^3}\left[n(n-1)(n-2)
 \sigma^2\mu_r\mu_{k-2}+o\left(n^3\right)\right]\to
 \sigma^2\mu_r\mu_{k-2}
 \]
 and
 \[
 n\E\left(m_{1,n}^2 m_{k-2,n}\right)=\frac{1}{n^2}\left[n(n-1)
 \sigma^2\mu_{k-2}+o\left(n^2\right)\right]\to
 \sigma^2\mu_{k-2}.
 \]
 Hence, when $j_2=k-2$ we have
 \begin{dmath*}
 n\E\left[m_{1,n}^{k-j_2}m_{j_2,n}(m_{r,n}-\mu_r)\right]
 =
 {
 n\E\left(m_{1,n}^2 m_{r,n} m_{k-2,n}\right)-n\mu_r \E\left(m_{1,n}^2
 m_{k-2,n}\right)\to \sigma^2\mu_r\mu_{k-2} - \mu_r
 \sigma^2\mu_{k-2}=0,
 }
 \end{dmath*}
 and the proof is complete.
 \hfill\qed
 \end{pr}

 \begin{pr}{Proof of \Cref{theo.singular.limit}}
 Observe that $M_{k,n}-\mu_k=g_{k,k}(\bmk)-g_{k,k}(\bmuk)$;
 see in \Cref{sec.notation}.
 Also, $\surd{n}(\bbm_n-\bmuk)\distr \bWk$,
 where $\bWk=(W_1,\ldots,W_k)'\sim N(\bm{0}_k,\Sigk)$, see \eqref{clt}.
 Hence, \Cref{lem.delta2}
 applies to $\bX_n=\bmk$, provided \eqref{sing.var}
 is fulfilled for $\bmk$, i.e., provided that
 $n[\nabla g_{k,k}(\bmuk)]'(\bmk-\bmuk)\prob 0$.
 Because $\nabla g_{k,k}(\bmuk)=(-k\mu_{k-1},0,\ldots,0,1)'$,
 we get
 $[\nabla
 g_{k,k}(\bmuk)]'(\bmk-\bmuk)=-k\mu_{k-1}m_{1,n}+(m_{k,n}-\mu_k)$.
 Since $\E(m_{j,n})=\mu_j$ for all $n$ and $j$ we get
 $\E[-k\mu_{k-1}m_{1,n}+(m_{k,n}-\mu_k)]=0$. Also,
 \begin{dmath*}
 \Var[-k\mu_{k-1}m_{1,n}+(m_{k,n}-\mu_k)]
 =
 {
 k^2 \mu_{k-1}^2
 \Var(m_{1,n}) + \Var(m_{k,n})-2k\mu_{k-1}\Cov(m_{1,n},m_{k,n})
 }
 =
 k^2 \mu_{k-1}^2 \frac{\sigma^2}{n}
 + \frac{\mu_{2k}-\mu_k^2}{n}-2k\mu_{k-1}\frac{\mu_{k+1}}{n}
 =
 \frac{1}{n}\left(k^2 \mu_{k-1}^2 \sigma^2
 + \mu_{2k}-\mu_k^2-2k\mu_{k-1}\mu_{k+1}\right)
 =
 \frac{1}{n}\left[\mu_{2k}-\mu_k^2+k\mu_{k-1}
 \left(k\sigma^2\mu_{k-1}-2\mu_{k+1}\right)
 \right]
 \hiderel=
 \frac{v_k^2}{n}
 \hiderel=0,
 \end{dmath*}
 because $v_k^2=0$ by the assumed singularness.
 Therefore, $[\nabla g_{k,k}(\bmuk)]'(\bmk-\bmuk)=0$
 with probability one and, thus,
 $n[\nabla g_{k,k}(\bmuk)]'(\bmk-\bmuk)\prob 0$
 in a trivial sense. Now, a simple calculation,
 since $\nabla g_{k,k}(\bmuk)=(-k\mu_{k-1},0,\ldots,0,1)'$,
 shows that
 \[
 \BH_k(\bmuk)=
 \left(
 \begin{array}{cccccc}
 k(k-1)\mu_{k-2} & 0 & \cdots & 0 & -k & 0 \\
 0 & 0 & \cdots & 0 & 0 & 0 \\
 \vdots & \vdots & \ddots & \vdots & \vdots & \vdots \\
 0 & 0 & \cdots & 0 & 0 & 0 \\
 -k & 0 & \cdots & 0 & 0 & 0 \\
 0 & 0 & \cdots & 0 & 0 & 0
 \end{array}
 \right),
 \]
 i.e.,
 \[
 \BH_2(\bmu_2)=
 \left(
 \begin{array}{cc}
 2 & 0 \\
 0 & 0
 \end{array}
 \right),
 \quad
 \BH_3(\bmu_3)=
 \left(
 \begin{array}{ccc}
 0 & -3 & 0\\
 -3 & 0 & 0 \\
 0 & 0 & 0
 \end{array}
 \right),
 \quad
 \BH_4(\bmu_4)=
 \left(
 \begin{array}{cccc}
 12\sigma^2 & 0 & -4 & 0\\
 0 & 0 & 0 & 0\\
 -4 & 0 & 0 & 0 \\
 0 & 0 & 0 & 0
 \end{array}
 \right),
 \]
 e.tc. Applying \eqref{delta2}, we see that
 $n(M_{k,n}-\mu_k)$ converges weakly to the
 distribution of
 $\frac{1}{2}\bW'_k\BH_k(\bmu_k)\bW_k
 =\frac{1}{2}k(k-1)\mu_{k-2} W_1^2 - k W_1 W_{k-1}$,
 while, by \eqref{clt}, the distribution of $(W_1,W_{k-1})'$
 is given by \eqref{sing.normal}.
 \hfill\qed
 \end{pr}
 }
 \begin{acknowledgements}
 We would like to thank H.\ Papageorgiou for helpful discussions.
 \end{acknowledgements}

\end{appendix}
\end{document}